\documentclass[11pt]{article}

\usepackage{mathtools}
\usepackage{amsmath, amsthm, amsfonts,amssymb}
\usepackage{enumitem}
\usepackage{graphicx}
\usepackage{colortbl}
\usepackage{tikz}
\usepackage[utf8]{inputenc}
\usepackage{esint}
\usepackage{mathrsfs}
\usepackage[T1]{fontenc}
\usepackage{mathrsfs}  
\usepackage{varwidth}
\usepackage{bbm} 
\usepackage{enumitem}
\usepackage{cite}
\usepackage{mathtools}
\usepackage{array}
\usepackage{graphicx}
\usepackage{caption}
\usepackage{subcaption}
\usepackage{dsfont}
\usepackage{ushort}
\usepackage{accents}
\usepackage{booktabs}
\usepackage{nicefrac}

\usepackage{tikz}
\usetikzlibrary{angles, quotes,calc,patterns}
\usepackage{tikz}
\usepackage{tikz-3dplot}
\usepackage{lineno} 
\usepackage{mathtools}
\mathtoolsset{showonlyrefs}

\numberwithin{equation}{section}

\usepackage{pgfplots}
\pgfplotsset{compat = newest}

\usepackage[titletoc, toc, page]{appendix} 

\newcommand{\intx}{\int_{\mathbb{R}^n}}
\newcommand{\ints}{\int_{\mathbb{S}^{n-1}}}
\newcommand{\lp}{\left(}
\newcommand{\rp}{\right)}
\newcommand{\eps}{\varepsilon}
\newcommand{\pp}{P_{\Omega^\perp}}

\newcommand\blfootnote[1]{%
	\begingroup
	\renewcommand\thefootnote{}\footnote{#1}%
	\addtocounter{footnote}{-1}%
	\endgroup
}

\def\d{\,\mathrm{d}}

\def\Id{\mathrm{Id}}

\usepackage[colorlinks=true,linkcolor=blue,citecolor=blue,urlcolor=blue,breaklinks]{hyperref}

\usepackage{hyperref}
\usepackage{url}
\bibliographystyle{plain}

\addtolength{\oddsidemargin}{-.875in}
\addtolength{\evensidemargin}{-.875in}
\addtolength{\textwidth}{1.75in}

\addtolength{\topmargin}{-.875in}
\addtolength{\textheight}{1.75in}

\newcommand{\red}[1]{{\textcolor{red}{#1}}}

\def\R{\mathbb{R}}

\def\S{\mathbb{S}}

\def\1{\mathds{1}}

\let\mc=\mathcal

\def\d{\,\mathrm{d}}

\def\p{\,\partial}
\def\O{\Omega}

\def\udoto{u\cdot\Omega}

\def\heo{h_\eta(u\cdot\Omega)}

\def\ga{G_{\eta A_\Omega}}
\def\etarho{\eta(\rho)}
\def\proopp{P_{\Omega^\perp}}

\def\upp{u_\perp}
\def\upar{u_\parallel}

\let\eps\varepsilon

\newcommand{\GetaA}{G_{\eta(\rho) A_{\Omega}}}
\newcommand{\Getatheta}{g_{\eta}(\theta) }

\newcommand{\GCI}{\boldsymbol{\psi}(u)}

\newtheorem{thm}{Theorem}[section]
\newtheorem{cor}[thm]{Corollary}
\newtheorem{lem}[thm]{Lemma}
\newtheorem{prp}[thm]{Proposition}

\newtheorem{asm}[thm]{Assumption}
\theoremstyle{definition}

\newtheorem{dfn}[thm]{Definition}
\theoremstyle{remark}
\newtheorem{remark}[thm]{Remark}

\hypersetup{pdftitle={Anisotropic}}
\hypersetup{pdfauthor={Merino, Plunder, Wytrzens  \& Yoldaş }}

\author{Sara Merino-Aceituno\footnote{Faculty of Mathematics, University of Vienna, Oskar-Morgenstern-Platz 1, 1090 Vienna, Austria. \\ sara.merino@univie.ac.at \& claudia.wytrzens@univie.ac.at}
\and Steffen Plunder\footnote{Institute for the Advanced Study of Human Biology (ASHBi), KUIAS, Kyoto University, Faculty of Medicine Bldg. B, Kyoto, 606-8303, Japan. plunder.steffen.2a@kyoto-u.ac.jp}
\and Claudia Wytrzens{$^*$}
\and Havva Yolda\c{s}\footnote{Delft Institute of Applied Mathematics, Faculty of Electrical Engineering, Mathematics and Computer Science, Delft University of Technology, Mekelweg 4, 2628CD Delft, The Netherlands. h.yoldas@tudelft.nl}
\footnote{Theoretical Sciences Visiting Program (TSVP), Okinawa Institute of Science and Technology Graduate University, Onna, 904-0495, Japan. havva-yoldas@oist.jp}}

\title{Macroscopic effects of an anisotropic Gaussian-type repulsive potential: nematic alignment and spatial effects}

\makeindex

\begin{document}
	
	\maketitle
	
	\vspace{-10pt}

	\begin{abstract}
		\noindent 

 Elongated particles in dense systems often exhibit alignment due to volume exclusion interactions, leading to packing configurations. Traditional models of collective dynamics typically impose this alignment phenomenologically, neglecting the influence of volume exclusion on particle positions. In this paper, we derive nematic alignment from an anisotropic repulsive potential, focusing on a Gaussian-type potential and first-order dynamics for the particles. By analyzing larger particle systems and performing a hydrodynamic limit, we study the effects of anisotropy on both particle density and direction. We find that while particle density evolves independently of direction, anisotropy slows down nonlinear diffusion. The direction dynamics are affected by the particles' position and involve complex transport and diffusion processes, with different behaviors for oblate and prolate particles. The key to obtaining these results lies in recent advancements in Generalized Collision Invariants offered by Degond, Frouvelle and Liu (KRM 2022) in \cite{DFL22}.
		
		\blfootnote{\emph{Keywords and phrases.} Anisotropic Gaussian-type repulsive potential, nematic alignment, mean-field limit, continuum equations, kinetic equations, Berne-Pechukas potential, prolate and oblate particles}
		\blfootnote{\emph{2020 Mathematics Subject Classification.} 35Q92, 82C22, 82D30, 82B40}
	\end{abstract}
	
	\tableofcontents
	
	\section{Introduction} \label{sec:intro}

Volume exclusion interactions play a central role in many physical and biological systems. In particular, they are fundamental in explaining emergent patterns like swarming \cite{WL12}, and spontaneous alignment of anisotropic particles \cite{DB15}. The latter is called \emph{nematic alignment}. The term \emph{nematic} indicates that the alignment takes place in a given direction (not necessarily in a given orientation): if $u_1,\, u_2\in \R^n$ are such that $|u_1|=|u_2|=1$, we say that the two vectors are \emph{nematically} aligned if $u_1=\pm u_2$.  Nematic alignment is sometimes referred to as \emph{apolar} alignment since it is in contrast to \emph{polar} alignment which requires $u_1=u_2$. Some examples of nematic alignment can be found in suspensions of rod-like particles in high-densities \cite{BM08}, and biological systems like myxobacteria \cite{DMY18}.

\subsection{Volume exclusion and nematic alignment}
\label{ssec:volume_exclusion}

Various volume exclusion models have been proposed to investigate cell dynamics, such as the vertex model \cite{AGS17, FOBS14,HN15} and other packing systems \cite{DFM17}. However, most agent-based models consider the agents as point-particles and impose phenomenological behavior that is assumed to be caused by volume exclusion interactions. Particularly, in most of these mathematical models for collective dynamics, the particle alignment is imposed via a force term without dealing with the contact interactions directly, see, e.g., \cite{DMY17,DMY18,DM08}.

Typical models for collective dynamics with nematic alignment take the following shape:  agents move at a constant speed and try to align their direction of motion with one of their neighbors up to some noise. Specifically, we consider $N$ agents who are identified by their positions $X_i \in \mathbb{R}^n, \, n \in \{2,3\}$ and their directions $U_i \in \mathbb{S}^{n-1}$ on the $(n-1)$-dimensional sphere. Then their dynamics are governed by 
\begin{subequations} 
\begin{align}  
		\d X_i &=  v_0  U_i \d t, \label{eq:nematic_X} 
	\\	\d U_i &= \frac{1}{N}\sum_{j=1}^N K(|X_i-X_j|) \nabla_{U_i}  V_{\text{nem}}(U_i, U_j) \d t + P_{U_i^\perp}\circ\sqrt{2D_u}\d B_i,  \label{eq:nematic_u}
	\\		V_{\text{nem}}(U_i, U_j)&:=\lambda (U_i\cdot U_j)^2, \label{eq:nematic_pot}
\end{align} 
\end{subequations} 
where $\nabla_{U_i}$ is the gradient on the sphere, $V_{\text{nem}}(U_i,U_j)$ is the potential producing nematic alignment, $P_{U_i^\perp}$ is the orthonormal projection onto the orthonormal space to $U_i \in \S^{n-1}$, and $(B_i)_{i=1,\dots,N}$ are independent Brownian motions for $i=1,\dots,N$. The stochastic differential equation \eqref{eq:nematic_u} must be understood in the Stratonovich sense. This is indicated with the symbol `$\circ$' and it ensures that $U_i$ remains on the sphere for all times where the solution is defined. The constant $v_0$ in \eqref{eq:nematic_X} is the speed of the particles and $D_u>0$ in \eqref{eq:nematic_u} is the diffusion constant of the directions.  Moreover, the function $K\geq 0$ is an interaction kernel measuring the influence of the potential force depending on the distance between particles. The constant $\lambda>0$ describes the strength of the alignment force, which is expressed as the gradient flow dynamics of the potential $V_{\text{nem}}=V_{\text{nem}}(U_i, U_j)$. One can easily check that, indeed, the maximizer of this potential corresponds to $U_i=\pm U_j$, i.e., when two particles are \emph{nematically} aligned. In this regard, we say that the model \eqref{eq:nematic_X}-\eqref{eq:nematic_pot} imposes alignment for the particles. We refer the reader to, e.g., \cite{DFM-A17, DMY17,DMY18, DM-A20, HMS18, NGC12}, for models that use this approach or a similar one.

In the present work, we follow a different approach. We do not wish to impose alignment directly, but to investigate how it might emerge naturally from volume exclusion interactions. In particular, we study how volume exclusion interactions affect \textit{both} the directions and the positions of the particles. 

However, deriving continuum equations from agent-based models that undergo contact interactions is mathematically extremely challenging, see, e.g., \cite{BBRW17,BCS23}. This is why up-to-date there is no rigorous coarse-graining for excluded volume dynamics starting from the first principles. Even in the widely-studied Boltzmann equation, the derivation from discrete dynamics (a particle system undergoing elastic collisions) is still unknown for large times \cite{BGS-RS22}. For this reason, contact interactions are often approximated by using soft interaction potentials, like repulsive potentials, that produce active forces when two agents get closer than a given distance \cite{BV13,CCWWZ20,DM-AVY19}.

In this article, we focus on a particular repulsive potential that is used for simulating the interactions of anisotropic particles: the Gaussian repulsive potential \cite{BP72} for elliptic (dimension $n=2$) or spheroidal (dimension $n=3$) particles, see Figure \ref{fig:spheroids}. For related works in the biophysics literature see, e.g., \cite{BM08,BM08-2}.

Our goal is to investigate for which shape of the Gaussian potential we obtain an alignment force for the continuum equation, and what is the effect of this force on the positions of the particles. The reason for the last point is that in classical models for nematic alignment, the potential only modifies the direction of motion of the agents, but not their positions. One can expect that, from interaction forces, agents may \emph{push }each other modifying their positions. The question of interest here is what effect this \emph{pushing} has on the positions of the particles. 
 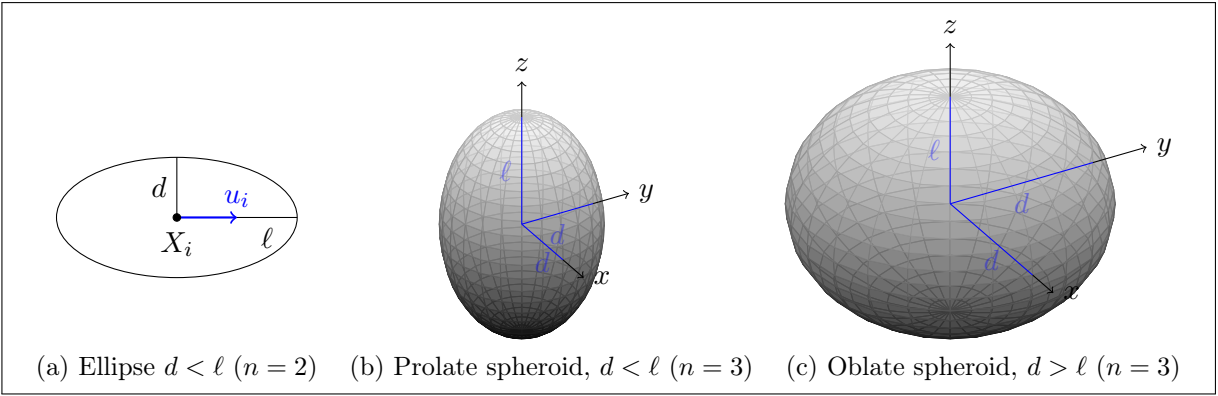
\begin{figure}[ht!]
	\begin{center}
		\fbox{ 
			\begin{tabular}[c]{c c c}
				 \begin{tikzpicture}[dot/.style={draw,fill,circle,inner sep=1pt}]
				 	\draw (0,0) -- (1.6,0) node [near end,below] {$\ell$};  
				 	\draw (0,0) -- (0,0.8) node [midway,left] {$d$};
				 	\draw[thick,blue,->] (0,0) -- (0.8,0) node[above] {$U_i$};
				 	\node[dot,label={below:$X_i$}] (Xi) at (0,0) {};
				 	\draw (0,0) ellipse (1.6 and 0.8);
			{\color{white}
                \begin{varwidth}{4cm}
						\draw[thick,->] (0,-1.6) -- (0.8,-1.6) node[above] {};
				\end{varwidth}
    }
				 \end{tikzpicture} & 
				 \begin{tikzpicture}
				 	\pgfmathsetmacro{\p}{1}
				 	\pgfmathsetmacro{\q}{1.5}
				 	\begin{axis}[scale=1.5,
				 		xlabel = {$x$},
				 		ylabel = {$y$},
				 		zlabel = {$z$},
				 		view = {60}{30},
				 		domain = 0 : pi,
				 		y domain = 0 : 2 * pi,
				 		z buffer = sort,
				 		unit vector ratio = 1 1,
				 		hide axis,
				 		colormap/blackwhite,
				 		declare function = {
				 			xp(\x, \y) = \p * sin(deg(\x)) * cos(deg(\y));
				 			yp(\x, \y) = \p * sin(deg(\x)) * sin(deg(\y));
				 			zp(\x, \y) = \q * cos(deg(\x));
				 		}, ]
				 		\addplot3[patch,fill opacity=0.7]({xp(x, y)}, {yp(x, y)}, {zp(x, y)});
				 		\draw[-,blue,fill opacity=0.4] (0, 0, 0) -- (\p, 0, 0) node[midway,below]{$d$};
				 		\draw[-,blue,fill opacity=0.4] (0, 0, 0) -- (0, \p, 0) node[midway,below]{$d$};
				 		\draw[-,blue,fill opacity=0.4] (0, 0, 0) -- (0, 0, \q) node[midway,left]{$\ell$};
						
				 		\draw[->] (\p, 0, 0) -- (\p+0.5, 0, 0) node[right]{$x$};
				 		\draw[->] (0, \p, 0) -- (0, \p+0.5, 0) node[right]{$y$};
				 		\draw[->] (0, 0, \q) -- (0, 0, \q+0.5) node[above]{$z$};
				 	\end{axis}
				 \end{tikzpicture} & 
				 \begin{tikzpicture}
				 	\pgfmathsetmacro{\p}{2.0}
				 	\pgfmathsetmacro{\q}{1.5}
				 	\begin{axis}[scale=1.5,
				 		xlabel = {$x$},
				 		ylabel = {$y$},
				 		zlabel = {$z$},
				 		view = {60}{30},
				 		domain = 0 : pi,
				 		y domain = 0 : 2 * pi,
				 		z buffer = sort,
				 		unit vector ratio = 1 1,
				 		hide axis,
				 		colormap/blackwhite,
				 		declare function = {
				 			xp(\x, \y) = \p * sin(deg(\x)) * cos(deg(\y));
				 			yp(\x, \y) = \p * sin(deg(\x)) * sin(deg(\y));
				 			zp(\x, \y) = \q * cos(deg(\x));
				 		}, ]
				 		\addplot3[patch, fill opacity=0.7]({xp(x, y)}, {yp(x, y)}, {zp(x, y)});
				 		\draw[-,blue,fill opacity=0.4] (0, 0, 0) -- (\p, 0, 0) node[midway,below]{$d$};
				 		\draw[-,blue,fill opacity=0.4] (0, 0, 0) -- (0, \p, 0) node[midway,below]{$d$};
				 		\draw[-,blue,fill opacity=0.4] (0, 0, 0) -- (0, 0, \q) node[midway,left]{$\ell$};

				 		\draw[->] (\p, 0, 0) -- (\p+0.5, 0, 0) node[right]{$x$};
				 		\draw[->] (0, \p, 0) -- (0, \p+0.75, 0) node[right]{$y$};
				 		\draw[->] (0, 0, \q) -- (0, 0, \q+0.75) node[above]{$z$};
				 	\end{axis}
				 \end{tikzpicture} \\
				\small (a) Ellipse $d<\ell$ ($n=2$) & 
				\small (b) Prolate spheroid, $d<\ell$ ($n=3$) &
				\small (c) Oblate spheroid, $d>\ell$ ($n=3$) 
		\end{tabular} }
	\end{center}
	\vspace{-0.5cm}
	\caption{\small Spheroids are obtained by rotating an ellipse, shown in (a), around one of its principal axes. If the revolution is around the major axis, the spheroid is called prolate (b); if it is around the minor axis, it is called oblate (c).}
	\label{fig:spheroids} 
\end{figure}

\subsection{A discrete model for anisotropic particles} \label{sec:intro_disc_mod}

In this article, we study particles with either elliptic shape in the case of dimension $n=2$ or spheroidal shape for dimension $n=3$ (see Figure \ref{fig:spheroids}). 
In both cases, the particles are identical and are identified by their center $X\in \mathbb{R}^n$, the direction of one of the axes specified by a unit vector $U\in \mathbb{S}^{n-1}$, and the lengths of the major and minor axes. In dimension $n=2$, $U$ denotes the direction of the major axis whose length is $\ell \geq 0$ and the length of the minor axis is denoted by $d \geq 0$. In dimension $n=2$, the \emph{main axis} is the principal axis, i.e., the axis with longer length; whereas in dimension $n=3$, the main axis is the axis of rotation.

We define the constant $\chi$ to characterize the shape of ellipses and spheroids,
\begin{align} \label{eq:def-chi}
	\chi :=\frac{\ell^2-d^2}{\ell^2+d^2},
\end{align} which measures the anisotropy of particles.
 In dimension $n=2$, we have that $\chi\in[0,1]$. In this case, $\chi=0$ and $\chi=1$ correspond to circular and rod-shaped particles, respectively.  In dimension $n=3$, we have $\chi \in [-1,1]$. In this case, the negative values of $\chi$ correspond to oblate particles (rotation around the minor axis), and the positive values of $\chi$ correspond to prolate particles (rotation around the major axis); see Figure \ref{fig:spheroids}. In particular, $\chi=0$ corresponds to spheres; $\chi = -1$ to infinitely flat disks; and $\chi = 1$ to infinitely thin rods.
 
 We consider $N$ identical particles identified by their centers $X_i\in\R^n$ and the direction of their main axes $U_i\in \mathbb{S}^{n-1}$ (notice that this is not uniquely defined as $U_i$ and $-U_i$ prescribe the same direction) for $i=1, \dots, N$. Two particles $(X_i,U_i)$, $(X_j,U_j)$ are said to be (nematically) aligned when $U_i=\pm U_j$.
 For simplicity, here we consider only inert particles, i.e, $v_0=0$. However, this could be easily extended. 
 
We assume that the repulsive potential $V_b$ acts on the distance between the centers of two particles $X_j-X_i$ and the directions of their main axes $U_i,U_j$, i.e., $V_b(U_i,U_j, X_j-X_i)$. The model follows the steepest gradient descent of the potential $V_b$ together with some noise both in the positions of the centers and in the directions of the main axes and it is given by
\begin{subequations}   \label{eq:discrete_system}
\begin{align}
		\d X_i &=-\mu \frac{1}{N}\sum_{j=1}^N \nabla_{X_i}V_b(U_i, U_j, X_j-X_i) \d t+ \sqrt{2D_x}\d B_i, \label{eq:discrete_system_X}\\ 
		\d U_i &=-\lambda \frac{1}{N}\sum_{j=1}^N \nabla_{U_i}V_b(U_i, U_j, X_j-X_i) \d t+P_{U_i^\perp}\circ \sqrt{2D_u}\d \tilde{B}_i,  \label{eq:discrete_system_u}
\end{align} 
	\end{subequations} where $\nabla_{U_i}$ and $P_{U_i^\perp}$ are the same as before, $B_i, \tilde B_i$ are independent Brownian motions for $i=1,\dots,N$; and $\mu, \lambda, D_x, D_u$ are positive constants. The potential $V_b$ corresponds to an anisotropic repulsive potential. In \cite{BKMT16}, the authors propose a model to describe the motion of spheroidal particles that are suspended in an incompressible fluid. The system \eqref{eq:discrete_system_X}-\eqref{eq:discrete_system_u} can be seen as the overdamped regime for these equations when there is no fluid. 

\begin{figure}[ht!] 
	\centering
	\fbox{ 
		\begin{tabular}[c]{m{15em} m{12em}}
			\begin{tikzpicture}[dot/.style={draw,fill,circle,inner sep=1pt}]
				{\color{white}
                \begin{varwidth}{4cm}
					\draw[thick,->] (0,2) -- (0.8,2) node[above] {};
				\end{varwidth}
    }
				\draw[rotate=90] (0,0) -- (2,0) node [near end,left] {$\ell$};  
				\draw[rotate=90] (0,0) -- (0,1) node [near end,below] {$d$};
				\node[dot,label={below:$x_1$}] (X1) at (0,0) {};
				\draw[thick,blue,->,rotate=90] (0,0) -- (1,0) node[midway, right] {$u_1$};
				\draw[fill=black!30!green, fill opacity=0.1,rotate=90] (0,0) ellipse (2 and 1);
				\draw[shift={(-0.17 cm,-3.75 cm)},rotate=60] (5,0) -- (7,0) node [near end,left] {$\ell$};  
				\draw[shift={(-0.17 cm,-3.75 cm)},rotate=60] (5,0) -- (5,1) node [midway,above] {$d$};
				\node[dot,label={below:$x_2$}] (X2) at (2.325,0.59) {};
				\draw[thick,blue,->,shift={(-0.17 cm,-3.75 cm)},rotate=60] (5,0) -- (6,0) node[midway, right] {$u_2$};
				\draw[fill=black!30!green, fill opacity=0.1,shift={(-0.17 cm,-3.75 cm)},rotate=60] (5,0) ellipse (2 and 1);
				\draw[thick, red, ->](0,0) -- (2.325,0.59);
				\node[red,label={below:$\red{R}$}] (R) at (1.325,0.53){};
			\end{tikzpicture} &
			\caption{Two ellipses with their centers $x_1, x_2 $, the distance between the centers $R$ and their principal axes $u_1, u_2$ respectively.}   \label{fig:rep_ellipsoid}
		\end{tabular} }
\end{figure}
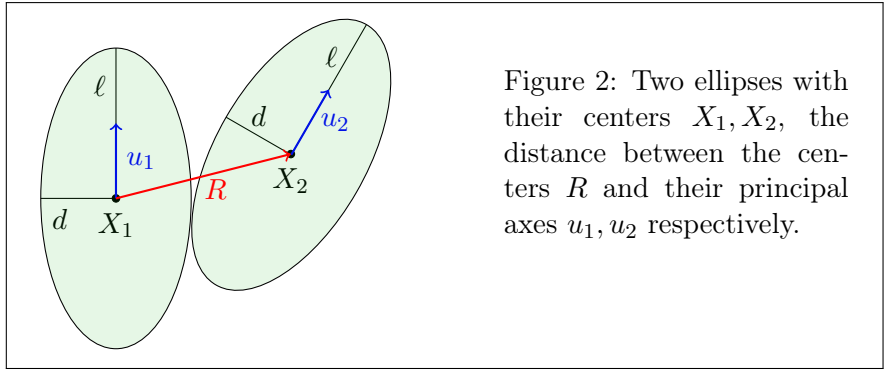 
In particular, notice that the potential $V_b$ acts also on the distance between the centers of the particles. This effect does not appear in the classical nematic-alignment model \eqref{eq:nematic_X}-\eqref{eq:nematic_pot}. Our goal is to investigate how this potential affects the mean-nematic direction and the positions of the particles. To carry this out, we derive a kinetic equation (see Section \ref{ssec:kinetic_equation}) for system \eqref{eq:discrete_system_X}-\eqref{eq:discrete_system_u} as a first step and subsequently we obtain continuum equations from the kinetic equation (see Section \ref{sec:macro_limit}).

Let us have a closer look at the interaction potential. We describe the binary interactions between identical particles (ellipses for $n=2$, or spheroids for $n=3$, see Figures \ref{fig:spheroids} and \ref{fig:rep_ellipsoid}) via an anisotropic Gaussian-type potential $V_b$ of the form
\begin{align} \label{eq:potentialG}
	V_b (u_1, u_2, R) = (4 \pi)^{-n/2} b(u_1,u_2)\exp \left ( - R^T \Sigma^{-1}R \right ),
\end{align} where $b(u_1,u_2)\geq 0$ is a scaling factor, $R=x_2-x_1$ is the vector between the centers of two particles, and the over index `T' denotes the vector transpose. The matrix $\Sigma$ is given by
$$\Sigma = \gamma_1 +\gamma_2,$$
where  $$\gamma_i = (\ell^{2}-d^2) u_i\otimes u_i +d^2 \Id, \qquad i \in \{1,2\}.$$
Our particular choice of the scaling factor $b$ is given by
\begin{align} \label{eq:scaling_factor_WG}
	b_{\text{WG}}(u_1,u_2):= \lp 1-\chi^2(u_1\cdot u_2)^2\rp^{1/2}=\mbox{det}\lp \Sigma\rp ^{1/2}.
\end{align}
Using the scaling factor \eqref{eq:scaling_factor_WG}, the weighted Gaussian potential \eqref{eq:potentialG} takes the following form:
\begin{align} \label{eq:weighted_Gaussian_potential}
	V_{b_{\text{WG}}} (u_1, u_2, R) :=  (4\pi)^{-n/2}\mbox{det}(\Sigma) ^{1/2} \exp\lp - R^T \Sigma^{-1}R \rp.
\end{align}

We dedicate Section \ref{ssec:potentials} to a detailed explanation of repulsive potentials and some numerical simulations to show their effect on the dynamics of interacting particles. This section also includes a more detailed motivation for our particular choice of the scaling factor \eqref{eq:scaling_factor_WG}. Before we explain the kinetic equation arising from the particle dynamics, we make the following remark:

\begin{remark} \label{rem:Berne_Pechukas}
	In \cite{BP72}, Berne and Pechukas considered the potential $U$ for a single spheroid with its center $x$ and direction of its main axis $u$:
		\begin{align*}
		\bar U(x):=\exp(-x^T\gamma^{-1}x),
	\end{align*}
	where $\gamma= (\ell^2-d^2)u\otimes u +d^2\Id$. Using the potential $\bar U=\bar U(x)$, they introduced the anisotropic Gaussian-type potential \eqref{eq:potentialG} to describe the binary repulsive interactions. The level sets where $\bar U$ is constant correspond to ellipsoids of revolution about the axis $u$,  i.e., spheroids concentric to the original  $(x,u)$-spheroid, and they remain so even if the potential $\bar U$ is multiplied by a factor that could potentially depend on $u$. For this reason, introducing the scaling factor $b=b(u_1,u_2)$ does not change the intrinsic properties of the potential. On the other hand, this offers some simplifications for mathematical analysis, see Section \ref{ssec:potentials} for more details. 
\end{remark}

\subsection{Kinetic equation and macroscopic quantities} \label{ssec:kinetic_equation}

In this section, we write the mean-field equation (or large-particle limit) of the particle system \eqref{eq:discrete_system_X}-\eqref{eq:discrete_system_u}. We consider a system of $N\geq 0$ identical particles in $\R^n$ for $n \in \{2,3\}$, identified by the positions of their centers and the directions of their main axes $(X_i, U_i)_{i=1,\dots,N}$ that follow the dynamics given by the system \eqref{eq:discrete_system_X} -\eqref{eq:discrete_system_u}. As $N\to \infty$, the particle system is described by the probability distribution function $f=f(t,x,u)$ of particles at position $x$ with the main axis in direction $u$ at time $t$. Obtaining the equation for $f$, at least formally, uses standard tools from kinetic theory, which we will not detail here but refer the reader to, e.g., \cite{V01}. 

We start with defining an empirical measure $f^N$ of the particles given by
\begin{align} \label{eq:def:empirical_distribution}
	f^N(t,x,u)=\frac{1}{N}\sum_{i=1}^N \delta_{(X_i(t),U_i(t))} (x,u),   	
\end{align}
where $\delta_{(X_i(t),U_i(t))} (x,u)$ is the Dirac delta distribution at $(X_i(t),U_i(t))$. The formal mean-field limit of the kinetic equation for the empirical distribution under the assumption that $f^N \rightarrow f$ as $N \rightarrow \infty$ solves
\begin{align} \label{eq:kinetic_eq}
	\p_t f-\mu \nabla_x\cdot \left ( \left  (\nabla_x V_f \right ) f \right )-\lambda\nabla_{u}\cdot \left ( \left (\nabla_u V_f \right ) f \right )-D_x\Delta_x f-D_u\Delta_{u} f=0,
\end{align} where
\begin{align} \label{eq:potential_V_f}
	V_f(t, x_1,u_1) :=\ints \intx V_b(u_1, u_2, x_2-x_1) f (t, x_2, u_2) \d x_2 \d u_2.
\end{align}
Next, we define the macroscopic quantities associated to $f=f(t,x,u)$, namely the spatial (or mass) density of the particles $\rho_f(t,x)$ and the mean-nematic direction $\O_f(t,x)$ at time $t \geq 0$ at position $x\in \R^n$. Our goal is to derive equations for $\rho_f(t,x)$ and $\O_f(t,x)$ from $f=f(t,x,u)$. 

The mass density $\rho_f(t,x)$ is defined by 
\begin{align*}
	\rho_f(t,x):= \ints f(t,x,u) \d u. 
\end{align*} 
To define $\O_f(t,x)$, we need to consider the following:
\begin{align} \label{eq:Qdef}
	(\rho_f  Q_f)(t,x): = \int_{\mathbb S^{n-1}} \left ( u\otimes u-\frac{1}{n}\Id  \right ) f(t, x,u) \d u.
\end{align} where $Q_f(t,x)$ is a matrix associated to the distribution $f=f(t,x,u)$ and it is called \emph{$Q$-tensor} in the literature of liquid crystals see, e.g., \cite{B17,V19}. Note that $Q_f$ is a symmetric, trace-free matrix satisfying $Q_f \geq -\frac{1}{n} \Id$. 

The principal eigenvector of $Q_f(t,x)$ gives the direction that corresponds to the mean-nematic direction of the particles located at $x\in \mathbb{R}^n$ at time $t \geq 0$ (see, e.g., page 15 of \cite{DFM-AT18} for an explanation). We denote the principal eigenvector by $\Omega_f(t,x) \in \mathbb{S}^{n-1}$ as long as it is unique (up to a change of sign). We have, in particular, that $\Omega_f=\Omega_f(t,x)$ maximizes over $\mathbb{S}^{n-1}$ the quantity (see page 37 of \cite{DFM-AT18}),
\begin{align*}
u  \mapsto  u\cdot (\rho_f Q_f) u = \ints \lp ( u \cdot \bar u)^2-\frac{1}{n}\rp f(t,x,\bar u) \d \bar u.
\end{align*}
This can also be seen as a consequence of the Courant-Fisher theorem, or the min-max theorem, see, e.g., \cite{T20}. Indeed, notice that if $f=f^N$ is the empirical measure \eqref{eq:def:empirical_distribution}, the previous expression implies that $\Omega_f$ maximizes
\begin{align*}
	u\mapsto\frac{1}{N}\sum_{j=1}^N (u\cdot U_j)^2,
\end{align*}
which is analogous to maximizing
\begin{align*}
	\frac{1}{N}\sum_{j=1}^N V_{\text{nem}}(u,U_j),
\end{align*}
where $V_{\text{nem}}$ is the potential given in Equation \eqref{eq:nematic_pot}. Thus, $\Omega_f$ defined in this way is indeed the mean-nematic direction. 

\bigskip

In this article, we derive equations for the spatial density $\rho=\rho_f(t,x)$ of particles and their mean-nematic direction $\Omega=\Omega_f(t,x)$. Before delving into the details in the next section, we end this introductory part with some comments on the continuum equations. For the density $\rho$, we obtain an equation of the form
\begin{align*}
	\p_t \rho = D_x\Delta_x \rho +\mu \nabla_x \cdot \left (K(\rho)\rho\nabla_x \rho\right ),
\end{align*} where $K$ is a functional that depends on $\rho$. Notice that the particle density does not depend on the mean-nematic direction $\Omega$. However, the effect of the repulsive potential is present through the functional $K$, which depends, particularly, on the anisotropy parameter $\chi^2$ (defined in \eqref{eq:def-chi}).

The equation for $\Omega$ is a combination of transport and diffusion, with a cross-diffusion term in $\rho$. Intriguingly, the well-posedness of the equation imposes a constraint on the parameters. Particularly, we require that 
\begin{align}
	\frac{D_x}{\mu}>\frac{D_u}{\lambda}.
    \label{eq:intro_condition_wellposedness}
\end{align} 
However, this constraint is not required at the particle level. We discuss in detail possible explanations for this constraint in Section \ref{ssec:interpretation}. Our main result is given in Theorem \ref{th:macroscopic limit} and its interpretation is given in Section \ref{ssec:interpretation}.

\paragraph{Structure of the paper.} In Section \ref{sec:macro_limit}, we state our main result, Theorem \ref{th:macroscopic limit}, after some preliminary definitions and results. Theorem \ref{th:macroscopic limit} is followed by an extensive interpretation section, Section \ref{ssec:interpretation}, where we comment on each component of our continuum equations. In Section \ref{ssec:potentials}, we motivate our choice of the Gaussian-type repulsive potential \eqref{eq:weighted_Gaussian_potential} by comparing it with similar interaction potentials used in the literature. Section \ref{ssec:particle_sims} is dedicated to numerical simulations of the stochastic particle system \eqref{eq:discrete_system_X}-\eqref{eq:discrete_system_u}. Here, we conduct a detailed parameter study and a numerical comparison of the effect of different repulsive potentials on the global alignment of the particles. We dedicate Section \ref{sec:proof} to the proof of Theorem \ref{th:macroscopic limit}. Finally, the paper is complemented with some concluding remarks and perspectives in Section \ref{sec:conclusions}, followed by appendices. 

\section{Continuum equations}
\label{sec:macro_limit}

In this section, we derive continuum equations, namely the equations for the mass density $\rho_f(t,x)$ and the mean-nematic direction $\O_f(t,x)$ from the kinetic equation \eqref{eq:kinetic_eq}. We start with non-dimensionalizing the kinetic equation \eqref{eq:kinetic_eq}. To this end, we introduce a scaling parameter $0 < \eps \ll 1$ and denote by $f^\eps$ the solution of the scaled kinetic equation.

\paragraph{Non-dimensionalization.} Introducing the units of space $x_0$ and time $t_0$, we define the dimensionless variables,
\begin{align*}
	\tilde{x}=\frac{x}{x_0}, \quad \tilde{t}=\frac{t}{t_0}, \quad \tilde{f}=f x_0^n, 
\end{align*} and the dimensionless parameters,
\begin{align*}
\tilde{\mu}=\mu \frac{t_0}{x_0^2}, \quad \tilde{\lambda}=t_0 \lambda, \quad \tilde{D}_u=t_0 D_u, \quad \tilde{D}_x=\frac{D_x t_0}{x_0^2}, \quad 
\tilde{\ell}=\frac{\ell}{x_0}, \quad \tilde{d}=\frac{d}{x_0}.
\end{align*}We remark that the anisotropy factor $\chi$ and the potential $V_b$ are already dimensionless. Thus, we have 
\begin{align*}
	\tilde{\chi}=\chi, \quad \tilde{V_b}=V_b, \quad \tilde{V_f}=V_f.
\end{align*}
Using these dimensionless quantities, dropping the tildes for the sake of simplicity, we obtain exactly the same equation as before, i.e., we end up with Equation \eqref{eq:kinetic_eq}. But now all the variables and parameters are without units.

\paragraph{Scaling.} Considering a scaling parameter $0 < \eps \ll 1$, we introduce the primed variables below,
\begin{align} \label{eq:scaling}
	\ell = \eps \ell', \quad d = \eps d', \quad \mu= \eps^{-n} \mu', \quad \lambda = \eps^{-(n+a)}\lambda', \quad D_u = \eps^{-a} D_u',
\end{align} where $a \in (0,2]$ is a constant.

Notice that the scaling in $\ell$ and $d$ combined with the scaling of the potential by  $\eps^n V^\eps_f =V_f$ produces a localization in space of the potential $V_f$ (for details, see Lemma \ref{lem:rescaling the potential}). The choice of $a$ has an influence on the resulting macroscopic equation (see Remark \ref{rem:choice_of_a}). Notice also that the diffusion constant $D_x$ stays of order $1$. We obtain the following rescaled kinetic equation (skipping the primes for simplicity):
\begin{align} \label{eq:dimless_kinetic_eq_without_expansion}
	\p_t f^\eps-\mu \nabla_x\cdot ((\nabla_x V^\eps_{f^\eps})f^\eps)-D_x\Delta_x f^\eps =\frac{1}{\eps^a} \left( \lambda\nabla_{u}\cdot ((\nabla_u V^\eps_{f^\eps}) f^\eps)+D_u\Delta_{u} f^\eps\right) . 
\end{align}
Next, we further expand the potential $V_f$.
\begin{lem}[Expansion of the potential] \label{lem:rescaling the potential}
	Considering $\ell = \eps \ell'$ and $d = \eps d'$ we have the following expansion $V_f^\eps$ of the scaled weighted Gaussian potential $V_f$ (defined by using  \eqref{eq:weighted_Gaussian_potential}):
	\begin{align} 
		V^\eps_f (t,x_1, u_1):=\frac{1}{\eps^n} V_f(t, x_1,u_1) \nonumber
		=&\ints\lp 1-\chi^2 (u_1\cdot u_2)^2\rp f(t, x_1,u_2) \d u_2 \\
		&+\frac{ \eps^{2}}{4} \ints\lp 1-\chi^2 (u_1\cdot u_2)^2\rp \Sigma (u_1,u_2): \nabla^2_x f(t, x_1, u_2) \d u_2 +\mc O (\eps^{3}) \nonumber \\
		=& \, W_f(t,x_1,u_1) + \eps^2 B_f(t,x_1,u_1) + \mathcal{O}(\eps^3), \label{eq:rescaleV}
 	\end{align} where $\nabla^2_x$ is the Hessian, i.e., a $n\! \times\! n$-matrix with components $(\nabla^2_x)_{ij}=\partial_{x_i}\partial_{x_j}$, 
  `$:$' denotes the double contraction, i.e., $A : B := \sum_{i,j} A_{ij} B_{ij}$, $W_f$ and $B_f$ are defined as 
\begin{align} 
	W_f (t,x_1, u_1) &:= \ints \left ( 1- \chi^2 (u_1 \cdot u_2)^2\right )f(t,x_1, u_2) \d u_2, \label{eq:W_f} \\
	B_f (t,x_1, u_1)  &:= \frac{1}{4} \ints\left ( 1- \chi^2 (u_1 \cdot u_2)^2\right ) \Sigma (u_1,u_2): \nabla^2_x f(t,x_1, u_2) \d u_2. \label{eq:B_f}
\end{align}
\end{lem}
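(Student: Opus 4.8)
The plan is to take advantage of the fact that, under the scaling \eqref{eq:scaling}, the weighted Gaussian \eqref{eq:weighted_Gaussian_potential} becomes a spatial kernel that concentrates on the scale $\eps$, so that $\eps^{-n}V_f$ is nothing but $f$ averaged against an approximate identity in the position variable. First I would rewrite $V_f$ from \eqref{eq:potential_V_f} in the relative variable $y := x_2 - x_1$,
\begin{equation*}
	V_f(t,x_1,u_1) = \ints\intx V_{b_{\text{WG}}}(u_1,u_2,y)\, f(t,x_1+y,u_2)\,\d y\,\d u_2,
\end{equation*}
and substitute $\ell = \eps\ell'$, $d = \eps d'$ into $\Sigma = \gamma_1+\gamma_2$. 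Since each $\gamma_i$ is quadratic in the lengths, this yields $\Sigma = \eps^2\Sigma'$, where $\Sigma'$ has the same form with the primed lengths, while the scaling factor $b_{\text{WG}}$ and the anisotropy $\chi$ depend only on $\chi$ and on $u_1\cdot u_2$ and are therefore \emph{scale invariant}. Hence the scaling enters only through the exponent, turning $\exp(-y^T\Sigma^{-1}y)$ into $\exp(-\eps^{-2}y^T(\Sigma')^{-1}y)$, a Gaussian of width $O(\eps)$ with $\eps$-independent amplitude.

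I would then rescale $y = \eps z$. The Jacobian $\eps^n$ exactly cancels the prefactor $\eps^{-n}$ in the definition $V_f^\eps := \eps^{-n}V_f$, leaving
\begin{equation*}
	V_f^\eps(t,x_1,u_1) = \ints\intx \psi(u_1,u_2,z)\, f(t,x_1+\eps z,u_2)\,\d z\,\d u_2,
\end{equation*}
with $\psi(u_1,u_2,z) := (4\pi)^{-n/2} b_{\text{WG}}(u_1,u_2)\, e^{-z^T(\Sigma')^{-1}z}$ a fixed, $\eps$-independent Gaussian in $z$. Next I would Taylor expand $f$ in its spatial slot,
\begin{equation*}
	f(t,x_1+\eps z,u_2) = f(t,x_1,u_2) + \eps\, z\cdot\nabla_x f(t,x_1,u_2) + \tfrac{\eps^2}{2}\, z^T\nabla_x^2 f(t,x_1,u_2)\, z + r_\eps(z,u_2),
\end{equation*}
and integrate term by term against $\psi$.

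The three contributions are then evaluated through the moments of the Gaussian $\psi$. The zeroth-order term gives $\ints b_{\text{WG}}\,[(4\pi)^{-n/2}\intx e^{-z^T(\Sigma')^{-1}z}\d z]\, f\, \d u_2$; using $\intx e^{-z^T(\Sigma')^{-1}z}\d z = \pi^{n/2}\det(\Sigma')^{1/2}$ together with \eqref{eq:scaling_factor_WG} (so that $b_{\text{WG}}\det(\Sigma')^{1/2}$ is proportional to $1-\chi^2(u_1\cdot u_2)^2$) reproduces $W_f$ as in \eqref{eq:W_f}. The first-order term vanishes because $\psi$ is even in $z$, whence $\intx z\,\psi\,\d z = 0$. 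For the second-order term I would use the second-moment identity $\intx z_i z_j\, e^{-z^T(\Sigma')^{-1}z}\d z = \tfrac12(\Sigma')_{ij}\intx e^{-z^T(\Sigma')^{-1}z}\d z$, which converts $\tfrac{\eps^2}{2}z^T\nabla_x^2 f\, z$ into $\tfrac{\eps^2}{4}\,\Sigma(u_1,u_2):\nabla_x^2 f$; integrating in $u_2$ yields exactly $\eps^2 B_f$ as in \eqref{eq:B_f}, the coefficient $\tfrac14$ being the product of the $\tfrac12$ from Taylor's formula and the $\tfrac12$ from the Gaussian second moment.

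\textbf{Main obstacle.} The delicate point is to make the remainder estimate $\ints\intx\psi\, r_\eps = \mathcal{O}(\eps^3)$ and the term-by-term integration rigorous. This requires regularity and decay assumptions on $f$: it is enough that $f(t,\cdot,u)$ be $C^3$ in $x$ with third derivatives bounded uniformly (or in a suitable weighted norm), so that Taylor's theorem with integral remainder applies and the Gaussian absolute moments $\intx |z|^k e^{-z^T(\Sigma')^{-1}z}\d z$ are finite for $k\le 3$; Fubini's theorem then justifies exchanging the $z$- and $u_2$-integrations. Since the third moments of the even Gaussian vanish, the genuine error is in fact $\mathcal{O}(\eps^4)$, so the stated $\mathcal{O}(\eps^3)$ is a safe bound. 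A secondary but necessary check is the bookkeeping of the powers of $\eps$ and of the determinant and normalization constants, confirming that $(4\pi)^{-n/2}$, the Gaussian mass $\pi^{n/2}\det(\Sigma')^{1/2}$, and $b_{\text{WG}}$ combine to give precisely the weight $1-\chi^2(u_1\cdot u_2)^2$ in both $W_f$ and $B_f$, with coefficients $1$ and $\tfrac14$ respectively.
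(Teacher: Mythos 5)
Your proposal follows essentially the same route as the paper's proof in Appendix \ref{appendix_lem rescaling the potential}: the rescaling $z=(x_2-x_1)/\eps$ whose Jacobian $\eps^n$ cancels the prefactor, a second-order Taylor expansion of $f$ in the spatial slot, the vanishing of the first-order term by evenness of the Gaussian, and the zeroth- and second-moment identities (the paper's \eqref{eq:prop_b_WG}) producing $W_f$ and the coefficient $\tfrac14$ in $B_f$. Your added remarks — the scale invariance of $b_{\text{WG}}$ and $\chi$ so that $\Sigma=\eps^2\Sigma'$, the regularity/Fubini justification of the remainder, and the observation that odd third moments make the error in fact $\mathcal{O}(\eps^4)$ — are correct refinements of the same argument, not a different one.
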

\begin{proof}
The proof can be found in Appendix \ref{appendix_lem rescaling the potential}.
\end{proof}

 Nematic alignment potentials analogous to $W_f$ appear in the literature of collective dynamics, e.g., in a model for nematic alignment of fibers \cite{DDP16}, in a model for body attitude coordination \cite{DFM-AT18}, and in a model of pure nematic alignment \cite{DM-A20}.

\medskip 

Finally, using the expansion for the scaled weighted Gaussian potential $V_{f}^\eps$ in Lemma \ref{lem:rescaling the potential} we obtain:
\begin{multline} \label{eq:dimless_kinetic_eq}
	\p_t f^\eps-\mu \nabla_x\cdot ((\nabla_x W_{f^\eps})f^\eps)-D_x\Delta_x f^\eps- \eps^{2-a}\lambda \nabla_{u}\cdot ((\nabla_u B_{f^\eps}) f^\eps) =\frac{D_u}{\eps^a} C(f^\eps) +\mathcal{O}(\eps^{\min\{2,3-a\}}), 
\end{multline} with $a\in (0,2]$ and where 
\begin{align} \label{eq:collision_operator}
	C(f):=\frac{\lambda}{D_u}\nabla_u\cdot((\nabla_u W_{f}) f)+\Delta_{u} f .
\end{align}

\begin{remark}[Motivation for the scaling factor $a$ and scaling choices] \label{rem:choice_of_a} 
The scaled kinetic equation with $a = 2$ can also be equivalently obtained by just scaling space and time by $x' = \sqrt{\eps} x$ and $t' = \eps t$, which corresponds to a diffusive scaling. However, we also consider an alternative scaling (corresponding to $a\in (0,2)$) where the term $B_f$ vanishes in the macroscopic limit.  The term $B_f$ is specific to the Gaussian repulsive potential, and it acts as a correction to the primary nematic alignment potential $W_f$. As we will see, the term $B_f$ is unique in the sense that it gives different behavior for prolate and oblate particles (see term $\Pi_3$ in the macroscopic limit \eqref{def:Pi3}). 
The two scaling factors that we consider highlight the differences in the dynamics with and without the term $B_f$. 

The scaling for $\ell$ and $d$ presented in \eqref{eq:scaling} makes the ellipsoid's size vanish as $\varepsilon\to 0$, while keeping the aspect ratio. This scaling choice can be interpreted as doing a `zoom out' in space. The scaling for $\lambda$, $D_u$ makes the term acting on the orientation $u$ predominant, while  keeping the ratio between the potential force and the noise intensity $\lambda V_f/D_u$  invariant under rescaling. Finally the parameter $\mu$ is rescaled so that the total potential force acting on the positions of the particles $\mu V_f$ is invariant under rescaling. 
\end{remark}

Our goal is to derive equations for the time evolution of the mass density $\rho_f= \rho_f(t,x)$ and the mean-nematic direction $\Omega_f=\Omega_f(t,x)$. For this reason, we  rewrite the kinetic equation in terms of the $Q$-tensor $Q_f$. First, using that $(u\cdot v)^2=u^T (v\otimes v) u$ and that $\rho_f = \ints f \d u$, we rewrite $W_f$ in \eqref{eq:W_f} as follows:
\begin{align} \label{def:W_f}
	W_f(t, x_1, u_1) = -\chi^2 u_1^T (\rho_f Q_f) u_1 - \left( \frac{\chi^2}{n}-1\right) \rho_f .
\end{align}
With this new expression for $W_f$ and using that $\nabla_u \rho_f=0$, the kinetic equation \eqref{eq:dimless_kinetic_eq} becomes
\begin{multline}\label{eq:rescaled_kinetic}
	\partial_t f^\eps + \mu \chi^2 \nabla_x \cdot \left (\nabla_x \right (u^T (\rho_{f^\eps} Q_{f^\eps} ) u \left  ) f^\eps \right ) + \mu\lp\frac{\chi^2}{n}-1\rp \nabla_x \cdot ((\nabla_x \rho_{f^\eps}) f^\eps) \\ - \lambda \eps^{2-a}\nabla_u \cdot ((\nabla_u B_f) f^\eps) - D_x \Delta_x f^\eps = \frac{D_u }{\eps^a}C(f^\eps)+ \mathcal{O}(\eps^{\min\{2,3-a\}}),
\end{multline} where
\begin{align}
	\label{eq:defC}
	C(f) =\nabla_u \cdot \left(\nabla_u f - \frac{\lambda}{D_u}f\nabla_u \left (\chi^2u^T (\rho_f Q_f )u \right ) \right).
\end{align}

In the next section, we give some preliminary definitions and concepts to analyze the operator $C$ in detail.

\subsection{Properties of the operator \texorpdfstring{$C$}{C}} \label{ssec:operator_C}

From Equation \eqref{eq:rescaled_kinetic}, we can observe that, at least formally, if $f^\eps$ converges to some function $f^0$ as $\eps \to 0$, then it must hold that $f^0$ belongs to the kernel of the operator $C$, i.e., $C(f_0)=0$. Fortunately, the equilibria for $C$ have already been studied in \cite{DFL22} and in this section we summarize the results presented there. Therefore, this section mainly follows some results of \cite{DFL22} omitting their proofs.

In particular, we are interested in stable equilibria of the operator $C$ (for an exact notion of stability see \cite{DFL22}). The key result, containing the characterization of the stable equilibria of $C$, is given in Lemma \ref{lem:stable_equilibria}, which can be found at the end of this section. 

\begin{remark} \label{rem:C}
	The operator $C$ defined in \eqref{eq:defC} is equivalent to the one presented in Equation (39) in \cite{DFL22} for
	\begin{equation} \label{eq:def:alpha}
		\alpha:=\frac{\chi^2\lambda}{D_u}.
	\end{equation}
\end{remark}
Next, we introduce some preliminaries. 
\begin{dfn}[Uniaxial tensor]\label{def:uniaxial}
		Given $\O \in \mathbb S^{n-1}$,  the normalized, uniaxial, trace-free tensor $A_\O$ in the direction of $\O$ is defined by
	\begin{align} \label{def:A_Om}
		A_\O = \O \otimes \O - \frac{1}{n} \Id. 
	\end{align} 
\end{dfn}
The tensor $A_\O$ is symmetric with principal eigenvalue $\frac{n-1}{n}$. It is called \emph{uniaxial} since it has only two eigenvalues and one of them is simple. The normalized eigenvectors associated to the simple eigenvalue are $\pm \O$. 
 
Next, we define the Gibbs distributions of uniaxial tensors. 
\begin{dfn}[Gibbs distribution of an uniaxial tensor] \label{def:Gibbs distribution}
Given $\eta>0$ and the tensor $A_\O$, the Gibbs distribution $G_{\eta A_\O}$ associated to $\eta A_\O$ is defined as 
\begin{align} \label{def:GetaA}
	G_{\eta A_\O} (u) = \frac{1}{Z_\eta} e^{\eta (u \cdot \O)^2}, \quad \text{where} \quad  Z_\eta= \int_{\mathbb S^{n-1}} e^{\eta (u \cdot \O)^2} \d u. 
\end{align}
\end{dfn} Finally, the order parameter associated to a Gibbs distribution is defined as follows. 
\begin{dfn}[Order parameter] \label{def:order_parameter}
The order parameter associated to $G_{\eta A_\Omega}$ is denoted by $S_2=S_2(\eta)$ and given by
	\begin{align} \label{eq:def:S_2}
		S_2 (\eta) :=\frac{n}{n-1}\Omega^T Q_{G_{\eta A_\O}} \Omega = \frac{n}{n-1}\ints\lp (u\cdot\O)^2-\frac{1}{n} \rp G_{\eta A_
			\Omega} \d u,
	\end{align}
 where $Q_{G_{\eta A_\O}}$ is the $Q$-tensor associated to $G_{\eta A_\O}$.
\end{dfn} 
\begin{remark} Using the change of variables \eqref{eq:change_of_variables}, the order parameter $S_2 (\eta)$ can be written as 
	\begin{align}\label{eq:def:S_2_theta}
	S_2(\eta) = \frac{n}{n-1}\int_0^\pi\lp \cos^2\theta-\frac{1}{n} \rp g_{\eta}(\theta) \sin^{n-2}\theta \d \theta.
	\end{align} where the function $g_\eta$ is given in \eqref{lem:var_trafo_G_h}.
\end{remark} Notice that $S_2(\eta)$ is actually independent of $\Omega$ and it satisfies the following:
 \begin{lem}[Proposition 2 in \cite{DFL22}]
	\label{lem:properties_S2}
	It holds that
	\begin{align*}
		Q_{G_{\eta A_\Omega}}=S_2(\eta) A_\Omega.
	\end{align*}
	Moreover, $S_2:(0,\infty) \to (0,1)$ is  non-decreasing with $S_2\to 0$ as $\eta \to 0$ and $S_2 \to 1$ as $\eta \to \infty$.
\end{lem}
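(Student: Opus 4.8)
The plan is to treat the three assertions separately, using the rotational symmetry of the Gibbs distribution throughout. Since $G_{\eta A_\Omega}$ is normalized, $\rho_{G_{\eta A_\Omega}}=\ints G_{\eta A_\Omega}\,\d u=1$, so by \eqref{eq:Qdef} the $Q$-tensor is simply $Q_{G_{\eta A_\Omega}}=\ints\lp u\otimes u-\tfrac1n\Id\rp G_{\eta A_\Omega}(u)\,\d u$, and it suffices to analyze this integral.

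For the identity $Q_{G_{\eta A_\Omega}}=S_2(\eta)A_\Omega$, first I would observe that $G_{\eta A_\Omega}(u)=Z_\eta^{-1}e^{\eta(u\cdot\Omega)^2}$ depends on $u$ only through $(u\cdot\Omega)^2$, and is therefore invariant under every rotation $R\in O(n)$ fixing $\Omega$ (i.e.\ $R\Omega=\Omega$, hence $R^{T}\Omega=\Omega$), since $(Ru\cdot\Omega)^2=(u\cdot R^{T}\Omega)^2=(u\cdot\Omega)^2$. Changing variables $u\mapsto Ru$ in the defining integral, and using $Ru\otimes Ru=R(u\otimes u)R^{T}$ together with the invariance of the surface measure, yields $R\,Q_{G_{\eta A_\Omega}}\,R^{T}=Q_{G_{\eta A_\Omega}}$ for all such $R$. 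A symmetric, trace-free matrix commuting with the whole stabilizer of $\Omega$ must be a scalar multiple of $A_\Omega=\Omega\otimes\Omega-\tfrac1n\Id$ (the space of such matrices is one-dimensional). Writing $Q_{G_{\eta A_\Omega}}=c\,A_\Omega$ and contracting with $\Omega$ gives $\Omega^{T}Q_{G_{\eta A_\Omega}}\Omega=c\,\tfrac{n-1}{n}$, which, compared against Definition \ref{def:order_parameter}, forces $c=S_2(\eta)$.

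For monotonicity I would recast $S_2$ as a rescaled expectation. Setting $m(u):=(u\cdot\Omega)^2$ and writing $\langle\,\cdot\,\rangle_\eta$ for the average against $G_{\eta A_\Omega}$, Definition \ref{def:order_parameter} reads $S_2(\eta)=\tfrac{n}{n-1}\lp\langle m\rangle_\eta-\tfrac1n\rp$ with $\langle m\rangle_\eta=Z_\eta^{-1}\ints m\,e^{\eta m}\,\d u$. Differentiating in $\eta$ gives the standard fluctuation identity $\tfrac{\d}{\d\eta}\langle m\rangle_\eta=\langle m^2\rangle_\eta-\langle m\rangle_\eta^{2}=\mathrm{Var}_\eta(m)\geq 0$, and since $m$ is non-constant on $\mathbb{S}^{n-1}$ the variance is in fact strictly positive. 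Hence $S_2'(\eta)=\tfrac{n}{n-1}\mathrm{Var}_\eta(m)>0$, so $S_2$ is strictly increasing; this gives the claimed non-decreasingness and will also pin down the open endpoints of the range.

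Finally, the two limits. As $\eta\to 0$ the density $G_{\eta A_\Omega}$ converges uniformly to the constant $1/|\mathbb{S}^{n-1}|$, so by dominated convergence $\langle m\rangle_\eta\to|\mathbb{S}^{n-1}|^{-1}\ints(u\cdot\Omega)^2\,\d u=\tfrac1n$ (rotational invariance makes the $n$ coordinate contributions equal and their sum equal to $\ints|u|^2\,\d u$), whence $S_2\to 0$. As $\eta\to\infty$ the measure concentrates at the maximizers $u=\pm\Omega$ of $m$; considering $\langle 1-m\rangle_\eta$ and splitting $\mathbb{S}^{n-1}$ into $\{m>1-\delta\}$ and its complement shows $\langle 1-m\rangle_\eta\to 0$, so $\langle m\rangle_\eta\to1$ and $S_2\to\tfrac{n}{n-1}\lp1-\tfrac1n\rp=1$; the cleanest route here is the $\theta$-variable representation \eqref{eq:def:S_2_theta} combined with Laplace's method near $\theta=0,\pi$. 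Together with strict monotonicity and the intermediate value theorem, the two limits show $S_2$ maps $(0,\infty)$ onto $(0,1)$. I expect the $\eta\to\infty$ asymptotics to be the only genuinely technical point, as it requires a concentration (Laplace-type) estimate rather than a pure symmetry argument.
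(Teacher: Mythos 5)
Your proof is correct, but there is no in-paper proof to compare it against: the paper imports this statement verbatim as Proposition 2 of \cite{DFL22}, and Section \ref{ssec:operator_C} states explicitly that it follows \cite{DFL22} ``omitting their proofs.'' Your three-step argument is the standard, self-contained route and each step checks out: (i) the uniaxiality claim is sound, since invariance of $Q_{G_{\eta A_\Omega}}$ under conjugation by the stabilizer of $\Omega$ forces the block form $a\,\Omega\otimes\Omega + b\,P_{\Omega^\perp}$, and trace-freeness reduces this to the one-dimensional span of $A_\Omega$, with the coefficient identified as $S_2(\eta)$ by contracting with $\Omega$ against Definition \ref{def:order_parameter} (your preliminary observation that $\rho_{G_{\eta A_\Omega}}=1$ is the right way to reconcile \eqref{eq:Qdef} with the normalized Gibbs density); (ii) the exponential-family identity $\frac{\d}{\d\eta}\langle m\rangle_\eta=\mathrm{Var}_\eta(m)\geq 0$ for $m(u)=(u\cdot\Omega)^2$ is correct and in fact yields strict increase, which is stronger than the stated non-decreasingness (and is what is implicitly needed later for the implicit definition of $\eta(\rho)$ in Proposition \ref{prop:def_eta}); (iii) the $\eta\to 0$ limit by dominated convergence and the $\eta\to\infty$ limit by the splitting of $\mathbb{S}^{n-1}$ into $\{m>1-\delta\}$ and its complement are both valid, the latter because the sublevel contribution is damped by $e^{-\eta\delta/2}$ relative to $Z_\eta$. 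One cosmetic remark: the paper normalizes $\ints \d u = 1$, so the uniform limit of $G_{\eta A_\Omega}$ is the constant $1$ rather than $1/|\mathbb{S}^{n-1}|$; this does not affect your computation since only the normalized average $\langle m\rangle_\eta$ enters, but it is worth matching the convention if this text were to be inserted into the paper.
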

Next, we define the function $\eta=\eta(\rho)$ implicitly through the following equation:
\begin{prp}[Implicit definition of $\eta=\eta(\rho)$, Proposition 3 in \cite{DFL22}]	\label{prop:def_eta}
	The equation 
	\begin{align}  \label{eq:etaroots}
		\frac{\eta}{\alpha \rho} = S_2(\eta)
	\end{align} has at least a root $\eta$ if and only if $\rho \in (\rho_*, +\infty)$, $\rho_*>0$. Moreover, \eqref{eq:etaroots} has at most two roots. Choosing the largest root, we can define a smooth, non-decreasing function  $(\rho_*, +\infty) \to (\eta^*, +\infty), \linebreak \rho \mapsto \eta(\rho)$, where $\eta^* = \lim_{\rho \to \rho_*} \eta(\rho) \geq 0$.
\end{prp}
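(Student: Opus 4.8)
The plan is to reduce the whole statement to a monotonicity/shape analysis of a single auxiliary function. Since the earlier lemma gives $S_2(\eta)\in(0,1)$ for all $\eta>0$, I can divide and introduce $\Phi(\eta):=\eta/S_2(\eta)$, so that the equation $\frac{\eta}{\alpha\rho}=S_2(\eta)$ on $(0,\infty)$ is \emph{equivalent} to $\Phi(\eta)=\alpha\rho$. Every claim then becomes a statement about how horizontal levels $\alpha\rho$ meet the graph of $\Phi$: the number of roots is the number of preimages, existence amounts to $\alpha\rho$ lying above $\inf\Phi$, and the selected branch $\eta(\rho)$ is a right inverse of $\Phi$.

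Next I would pin down the boundary behaviour of $\Phi$. Since $S_2(\eta)\to1$ as $\eta\to\infty$, we get $\Phi(\eta)\to+\infty$. As $\eta\to0^+$, smoothness of $S_2$ together with $S_2(0)=0$ gives $S_2(\eta)=S_2'(0)\,\eta+o(\eta)$, where $S_2'(0)=\tfrac{n}{n-1}\mathrm{Var}_{\mathrm{unif}}\!\big((u\cdot\Omega)^2\big)>0$ is obtained by differentiating the Gibbs average $\eta\mapsto\langle(u\cdot\Omega)^2\rangle_{G_{\eta A_\Omega}}=\partial_\eta\log Z_\eta$ at $\eta=0$ (or by quoting the corresponding computation in \cite{DFL22}). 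Hence $\Phi(0^+)=1/S_2'(0)$ is finite and positive, so $m:=\inf_{\eta>0}\Phi(\eta)>0$ exists and I set $\rho_*:=m/\alpha>0$.

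\textbf{The main obstacle} is proving the bound ``at most two roots'', i.e.\ that $\Phi$ is unimodal (strictly decreasing then strictly increasing, or else monotone). I would compute $\Phi'(\eta)=\big(S_2(\eta)-\eta S_2'(\eta)\big)/S_2(\eta)^2$, so that $\mathrm{sgn}\,\Phi'=\mathrm{sgn}\,\psi$ with $\psi(\eta):=S_2(\eta)-\eta S_2'(\eta)$; note $\psi(0)=0$ and $\psi'(\eta)=-\eta\,S_2''(\eta)$. Thus the monotonicity of $\Phi$ is controlled \emph{entirely} by the concavity structure of $S_2$: it suffices to show that $S_2''$ changes sign at most once (a convex-then-concave sigmoid), which forces $\psi$ to have at most one positive zero and hence $\Phi$ to be unimodal. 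Establishing this sign-change property is the genuinely analytic heart of the proof; it requires information on $S_2$ beyond the monotonicity already available, and I expect it to rest on cumulant identities for the Gibbs measure $G_{\eta A_\Omega}$ (writing $S_2$ through $\partial_\eta\log Z_\eta$ and analysing higher derivatives/variances). I also anticipate the outcome to be dimension-dependent: for $n=2$ one likely finds $\Phi$ monotone with $\inf\Phi=\Phi(0^+)$ not attained, whereas for $n=3$ there is a true interior minimum.

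Finally I would assemble the conclusions from unimodality. The level $\alpha\rho$ meets the graph of $\Phi$ zero times when $\alpha\rho<m$ (i.e.\ $\rho<\rho_*$), once at the minimiser when $\alpha\rho=m$, and at most twice otherwise; this yields both the existence dichotomy at $\rho_*$ and the bound of two roots. I then define $\eta(\rho)$ as the \emph{largest} root, which for $\rho>\rho_*$ lies on the branch where $\Phi'>0$ strictly. Applying the implicit function theorem to $\Phi(\eta)-\alpha\rho=0$ there gives that $\rho\mapsto\eta(\rho)$ is smooth with $\eta'(\rho)=\alpha/\Phi'(\eta(\rho))>0$, hence non-decreasing, and letting $\rho\to\rho_*^+$ identifies $\eta^*=\lim_{\rho\to\rho_*^+}\eta(\rho)$ with the minimiser of $\Phi$, an interior point of $[0,\infty)$; in particular $\eta^*>0$ precisely when the infimum is attained in the interior and $\eta^*=0$ otherwise, consistent with $\eta^*\geq0$.
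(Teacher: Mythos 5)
There is a genuine gap, and you have in fact flagged it yourself: the claim that $S_2''$ changes sign at most once (equivalently, that $\Phi(\eta)=\eta/S_2(\eta)$ is unimodal, equivalently the ``at most two roots'' bound) is never proved — it is announced as the ``analytic heart'' and then deferred to unspecified ``cumulant identities''. Your reduction up to that point is correct: the equivalence of \eqref{eq:etaroots} with $\Phi(\eta)=\alpha\rho$, the boundary behaviour $\Phi(0^+)=1/S_2'(0)<\infty$ with $S_2'(0)=\tfrac{n}{n-1}\mathrm{Var}_{\mathrm{unif}}\bigl((u\cdot\Omega)^2\bigr)>0$, the identity $\mathrm{sgn}\,\Phi'=\mathrm{sgn}\,\psi$ with $\psi=S_2-\eta S_2'$ and $\psi'=-\eta S_2''$, and the implicit-function-theorem argument on the strictly increasing branch are all sound. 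But the root-counting claim is exactly where the difficulty of the proposition sits: writing $S_2''$ through the third cumulant of $(u\cdot\Omega)^2$ under $G_{\eta A_\Omega}$ does not come with a sign, and controlling a third cumulant uniformly in $\eta$ is delicate. Note also that the paper itself does not prove this statement — it is imported verbatim as Proposition 3 of \cite{DFL22}, and the proof there rests on explicit, dimension-specific representations of the Gibbs averages (for $n=2,3$ the compatibility equation can essentially be solved semi-explicitly), not on the soft convex-then-concave sigmoid property you hope for. Without either carrying out that explicit analysis or supplying an actual proof of the single inflection of $S_2$, your argument establishes existence of $\rho_*$ and the properties of the selected branch \emph{conditionally on} unimodality, which is most of the statement's content left open.

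A secondary, smaller point: your own picture is in tension with the literal ``if and only if $\rho\in(\rho_*,+\infty)$''. When the infimum $m$ of $\Phi$ is attained at an interior minimiser (the case you correctly anticipate for $n=3$), the level $\alpha\rho_*=m$ does meet the graph, so \eqref{eq:etaroots} has a root at $\rho=\rho_*$ as well; you assert this (``once at the minimiser when $\alpha\rho=m$'') and then claim the dichotomy at $\rho_*$ without reconciling the two. A complete write-up should either adopt the convention that $\rho_*$ is excluded because the branch $\eta(\rho)$ is only defined and smooth on the open interval (IFT fails at the minimiser, where $\Phi'=0$), or state the endpoint case separately, as the source \cite{DFL22} does.
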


Finally, with all the definitions above, we can state the main result regarding the equilibria of $C$:
\begin{lem}[Stable equilibria of $C$, Lemma 4.4 in \cite{DFL22}] \label{lem:stable_equilibria}
		Let $n \in \{2,3\}$. For $\rho_*$ given in Proposition \ref{prop:def_eta} the following holds:
		\begin{itemize}
			\item[(i).] If $\rho<\rho_*$, then $f=\rho$ (the uniform equilibrium) is the only stable equilibrium of the operator $C$.
			\item[(ii).] If $\rho>\rho_*$, then the only stable equilibria of the operator $C$ are of the form 
			\begin{align*}
				f=\rho G_{\eta(\rho)A_\Omega}.
			\end{align*} In particular, we have 
		\begin{align} \label{eq:consistency_relation}
			Q_f = S_2(\eta(\rho))A_\Omega, 
		\end{align} where $S_2$ and $\eta=\eta(\rho)$ are defined  in \eqref{eq:def:S_2} and \eqref{eq:etaroots} respectively. 
	Equation \eqref{eq:consistency_relation} is referred to as \emph{“consistency relation”} and can be written, using \eqref{eq:etaroots}, equivalently as 	
	\begin{align} \label{eq:limit_rhoQ}
		\alpha\rho Q_{\GetaA} = \eta(\rho) A_\Omega.
	\end{align}
		\end{itemize}
\end{lem}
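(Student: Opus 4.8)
The plan is to recognise $C$ as a Wasserstein gradient of a Maier--Saupe-type free energy on $\S^{n-1}$ and to classify its equilibria through a finite-dimensional self-consistency problem, in the spirit of \cite{DFL22}. With $\alpha$ as in \eqref{eq:def:alpha} and $\Phi_f(u):=\alpha\,u^T(\rho_f Q_f)u$, the operator in \eqref{eq:defC} reads $C(f)=\nabla_u\cdot\big(f\,\nabla_u(\log f-\Phi_f)\big)$ wherever $f>0$. First I would show that every positive equilibrium is a Gibbs state: testing $C(f)=0$ against $\log f-\Phi_f$ and integrating by parts on the closed manifold $\S^{n-1}$ gives $\ints f\,\abs{\nabla_u(\log f-\Phi_f)}^2\,\d u=0$, so $\log f-\Phi_f$ is constant and hence
\[
 f=\rho\,G_{\mathbf M},\qquad G_{\mathbf M}(u)=\tfrac{1}{Z}e^{u^T\mathbf M u},\qquad \mathbf M=\alpha\rho\,Q_f ,
\]
where $\mathbf M$ may be taken symmetric and trace-free (subtracting a multiple of $\Id$ only rescales $Z$). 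The equilibrium problem thus reduces to the matrix self-consistency relation $\mathbf M=\alpha\rho\,Q_{G_{\mathbf M}}$.

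Next I would diagonalise $\mathbf M$. Since $u\mapsto u^T\mathbf M u$ depends only on the eigenvalues of $\mathbf M$, the tensor $Q_{G_{\mathbf M}}$ is diagonal in the eigenbasis of $\mathbf M$, so the self-consistency relation decouples into scalar equations for the eigenvalues. Here the hypothesis $n\in\{2,3\}$ is used: for $n=2$ every trace-free symmetric matrix equals $\eta A_\Omega$ for some $\eta,\Omega$ and is automatically uniaxial, whereas for $n=3$ one must separately exclude genuinely biaxial solutions (three distinct eigenvalues). Inserting the uniaxial ansatz $\mathbf M=\eta A_\Omega$ and using Definition \ref{def:order_parameter} together with Lemma \ref{lem:properties_S2}, i.e. $Q_{G_{\eta A_\Omega}}=S_2(\eta)A_\Omega$, collapses $\mathbf M=\alpha\rho\,Q_{G_{\mathbf M}}$ to the scalar equation $\eta/(\alpha\rho)=S_2(\eta)$, which is exactly \eqref{eq:etaroots}; Proposition \ref{prop:def_eta} then yields the threshold $\rho_*$, the fact that roots exist iff $\rho>\rho_*$, and the selection of the largest root $\eta(\rho)$. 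This already produces the candidate equilibria $f=\rho$ (from $\mathbf M=0$) and $f=\rho\,G_{\eta(\rho)A_\Omega}$, and \eqref{eq:consistency_relation}--\eqref{eq:limit_rhoQ} follow by construction.

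Finally, and this is the main obstacle, I would determine which equilibria are stable via the free energy
\[
 \mathcal F[f]=\ints f\log f\,\d u-\frac{\alpha}{2}\ints u^T(\rho_f Q_f)u\,f\,\d u ,
\]
whose critical points are exactly the Gibbs equilibria above and whose local minimisers are the stable equilibria. The delicate steps are: (i) showing that no biaxial critical point (possible only for $n=3$) is a local minimiser, so that every stable equilibrium is uniaxial; and (ii) deciding, among the at most two uniaxial roots of \eqref{eq:etaroots}, that the smaller root is a saddle while the larger root $\eta(\rho)$ is a minimiser, and that the isotropic state $f=\rho$ is the minimiser precisely when $\rho<\rho_*$ and loses stability as $\rho$ crosses $\rho_*$. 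Both reduce to a sign analysis of the second variation of $\mathcal F$, which in turn is governed by the monotonicity of $S_2$ from Lemma \ref{lem:properties_S2} and by the behaviour of the scalar map $\eta\mapsto\eta-\alpha\rho\,S_2(\eta)$ near its roots; performing this analysis for $n=2$ and $n=3$ separately and matching the bifurcation at $\rho_*$ gives the dichotomy (i)--(ii) of the statement.
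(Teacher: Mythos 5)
First, a framing point: the paper does not prove this lemma at all. It is imported verbatim from \cite{DFL22} (Lemma 4.4 there), and the surrounding section explicitly states that it follows \cite{DFL22} ``omitting their proofs.'' So there is no in-paper proof to match; the comparison must be against the strategy of the cited source, and your outline does indeed reconstruct that strategy correctly in its first two stages. The identity $C(f)=\nabla_u\cdot\bigl(f\,\nabla_u(\log f-\Phi_f)\bigr)$ with $\Phi_f=\alpha\,u^T(\rho_f Q_f)u$ is right, the dissipation argument correctly forces every (positive, smooth) equilibrium to be a Gibbs state $f=\rho\,G_{\mathbf M}$ with $\mathbf M=\alpha\rho\,Q_f$ automatically symmetric and trace-free, and the reduction of the uniaxial ansatz $\mathbf M=\eta A_\Omega$ to the scalar relation $\eta/(\alpha\rho)=S_2(\eta)$ via Lemma \ref{lem:properties_S2} is exactly how \eqref{eq:consistency_relation} and \eqref{eq:limit_rhoQ} arise. (Minor caveat: the integration-by-parts step needs $f>0$ and enough regularity for $\log f$ to be admissible as a test function; equilibria must first be shown to lie in this class.)

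The genuine gaps are precisely at the two places you label as ``the main obstacle,'' and they cannot be discharged by the soft arguments you propose. (i) For $n=3$, excluding biaxial solutions of $\mathbf M=\alpha\rho\,Q_{G_{\mathbf M}}$ is not a routine sign analysis of the second variation of $\mathcal F$: it is a hard classification result (all critical points of the Maier--Saupe self-consistency problem are uniaxial), proved in the literature by a delicate analysis of the coupled scalar consistency equations for the eigenvalues, and it is needed at the level of \emph{all} equilibria before the stability discussion even begins, since otherwise the candidate set in part (ii) is not exhausted by $\rho\,G_{\eta(\rho)A_\Omega}$. (ii) Your final claim that the isotropic state ``is the minimiser precisely when $\rho<\rho_*$ and loses stability as $\rho$ crosses $\rho_*$'' is exactly the contested point. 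For $n=3$ the equation $\eta/(\alpha\rho)=S_2(\eta)$ has \emph{two} positive roots on a density interval (as Proposition \ref{prop:def_eta} itself records), the classical free-energy landscape exhibits hysteresis, and in the linearized sense the isotropic equilibrium remains stable up to a threshold strictly larger than $\rho_*$; whether the clean dichotomy of the lemma holds at $\rho_*$ depends entirely on the precise notion of stability adopted in \cite{DFL22}, which you never pin down. As written, your step (ii) either contradicts the standard linear-stability picture for $n=3$ or silently uses a stability notion you have not defined, so the dichotomy (i)--(ii) of the statement is asserted rather than proved. The skeleton is the right one, but the two steps carrying the actual mathematical content of \cite{DFL22}'s Lemma 4.4 remain open in your proposal.
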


\begin{remark} \label{rem:the_nematic_potential}
	The operator $C$ is analogous to the operator that appears for models of nematic alignment where the nematic alignment is imposed, see, e.g., \cite{DM-A20}. However, in \cite{DM-A20} the potential is written in a so-called, `mean-field force' form rather than a binary force. Here, we have the term
	\begin{align*}
		\varphi_1(u):=u^T \rho_f Q_f u = \ints \lp (u\cdot \bar u)^2 - \frac{1}{n}\rp f(t,x,\bar u) \d \bar u,
	\end{align*}
	while in \cite{DM-A20} this operator is replaced by
	\begin{align*}
		\varphi_2(u):=u^T A_{\Omega_f}u=(u\cdot \Omega_f)^2-\frac{1}{n},
	\end{align*}
	where $\Omega_f$ corresponds to the mean-nematic direction (that is why it is called `mean-field force'). We expect that both cases lead to nematic alignment. Already in the case $\varphi_2$ we see that $u=\pm \Omega_f$ is the maximizer. In the first case, we also know that $\Omega_f$ corresponds to the principal eigenvector of $Q_f$, gives the mean-nematic direction and maximizes $\varphi_1$. However, it is not guaranteed that the principal eigenvalue has only multiplicity $1$. This adds an extra degree of difficulty to the analysis. Lemma \ref{lem:stable_equilibria} demonstrates that this leads to phase transitions in the macroscopic limit, with spatial regions of disordered dynamics, i.e., the dynamics where a well-defined principal eigenvector does not exist, and thus, particles do not align. This takes place in the regions of low density, i.e., $\rho < \rho_*$.
\end{remark}

\begin{remark}
	Given the matrix $Q_f$, suppose that $\Omega_f\in \mathbb{S}^{n-1}$ is the principal eigenvector (assumed to be unique up to a change in sign). Then, it holds that $A_{\Omega_f}$ also has a principal eigenvector $\Omega_f$. So, both $Q_f$ and $A_{\Omega_f}$ give the same mean-nematic direction. However, $Q_f \neq A_{\Omega_f}$ in general. In particular, the principal eigenvalue of $A_{\Omega_f}$ is always $\frac{n-1}{n}$, while for $Q_f$ the eigenvalues have $\frac{n-1}{n}$ as an upper bound (see \cite{DM-A20}). In particular (see \cite{DFL22}), to measure the degree of nematic alignment one considers the following order parameter $\gamma_f$:
	\begin{align} \label{def:deg_align}
		\gamma_f = \frac{n}{n-1} \beta_f  \in (0,1],
	\end{align} where $\beta_f$ is the largest eigenvalue of $Q_f$. If $f$ is uniformly distributed on the sphere (particles are fully misaligned), then $\gamma_f$ is close to $0$. On the contrary, if $f$ is close to $\frac{1}{2}(\delta_\Omega +\delta_{-\Omega})$, i.e., particles are fully aligned and $Q_f = A_{\Omega_f}$, then  $\gamma_f=1$. So the case $Q_f=A_{\O_f}$ is an extreme case that corresponds to having all particles nematically aligned in the direction $\Omega_f$. 

In fact, the function $S_2=S_2(\eta)$ is the order parameter of $ G_{\eta A_\Omega}$.
\end{remark}

\begin{remark} As $\eta \to 0$, $G_{\eta A_\O}$ converges to the uniform probability distribution on $\mathbb S ^{n-1}$ and $S_2$ converges to $0$. As $\eta \to \infty$,  $G_{\eta A_\O}$ converges to two Dirac delta distributions $\frac{1}{2}(\delta_\O +\delta_{-\O})$ which accounts for fully aligned distribution in the direction of $\O$ and thus also $S_2$ converges to $1$. Therefore, as $\eta$ increases, $S_2$ increases too (see also Figure \ref{fig:eta_and_S_2_b}), and $G_{\eta A_\O}$ shows increasing order of alignment.
\end{remark}

Now, we are ready to state our main result in the next section. 

\subsection{The macroscopic limit} \label{ssec:macro_lim}

\begin{asm} \label{as:A}
We assume throughout that all the functions are as smooth as needed so that all the limits exist and the convergences are as strong as needed.
\end{asm}

 Next, we introduce the following definition which we will use in the theorem below.
\begin{dfn} \label{dfn:tensor_operations}  Let  $A$ be a $5$-tensor and $B$ a $4$-tensor in $\R^n$, then we define the following operation between tensors:
\begin{align*}
([A:B]_{[2,3,4,5:1,2,3,4]})_p:=\sum_{j,k,l,m=1}^n A_{pjklm}B_{jklm}, \quad p=1,\hdots,n, 
\end{align*} The contraction with the subscript $[2,3:1,2]$ is analogously defined.
\end{dfn}

We give our main result in the following theorem:

\begin{thm} \label{th:macroscopic limit}
	Let $n \in\{2,3\}$, $a\in(0,2]$, and $f^\eps=f^\eps(t,x,u)$ be the solution to the kinetic equation \eqref{eq:rescaled_kinetic}. Suppose that Assumption \ref{as:A} holds and that $f^\eps (t,x,u)\to f^0(t,x,u)$ as $\eps \to 0$. Consider $(t,x)$ such that $\int_{\S^{n-1}} f^0 du >\rho_*$ (where $\rho_*$ is given in Proposition \ref{prop:def_eta}). Then, in this domain, it holds that $f^0=\rho(t,x)\GetaA$ with $\rho(t,x)=\int_{\S^{n-1}} f^0 du >\rho_*$,  where $A_\O, \GetaA, \eta(\rho)$ are given in \eqref{def:A_Om}, \eqref{def:GetaA} and \eqref{eq:etaroots} respectively. Moreover, the mass density $\rho(t,x)$ and the mean-nematic direction of the particles $\O(t,x)$ satisfy the following system of equations
\begin{subequations}
\begin{align}
		&\p_t \rho = D_x\Delta_x \rho +\mu \nabla_x \cdot \left(K(\eta(\rho))\rho\nabla_x \rho\right), \label{eq:macro_rho}\\
		 &\p_t \O   + \mu \Pi_2(\rho)(\nabla_x \rho\cdot\nabla_x) \O  + \mu \left ( \sigma-\nu \right ) P_{\O^\perp} \Delta_x \O   = \1_{a=2}\lambda \Pi_3 (\O, \rho). \label{eq:macro_Omega}
\end{align}
\end{subequations}where 
\begin{align} \label{eq:Ceta_coeff_porousmedium}
	K(\eta(\rho)):=  1- \frac{\chi^2}{n}-\sigma\frac{n-1}{n}S_2(\eta(\rho))\eta'
(\rho), 
\end{align} and
$$\sigma:= \frac{D_u}{\lambda}, \qquad \nu := \frac{D_x}{\mu}.$$
Moreover, $\Pi_2$ is given by
\begin{align}
	\Pi_2(\rho)= \frac{\sigma-2\nu}{\rho} + \frac{\chi^2}{n}-1  + 2 \frac{\eta'(\rho)}{\eta(\rho) } \lp \sigma-\nu \rp +   \eta'(\rho)\left ( 2 \lp \sigma-\nu \rp\frac{c_{3,2}(\rho)}{c_{1,2}(\rho)}-d_{2,0}(\rho)\left ( \sigma-2\nu \right )-\frac{\sigma}{n} \right ),
\end{align} where, for $k,p\in \mathbb{N}\cup \{0\}$,  $c_{k, p}=c_{k,p}(\rho)$ and $d_{k,p}=d_{k,p}(\rho)$ are given by 
\begin{align}
	c_{k,p}(\rho) &:=\int_0^\pi \cos^k\theta g_{\eta(\rho)}(\theta) h_{\eta(\rho)}(\cos\theta) \sin^{n-2+p}\theta \d \theta, \label{eq:def_ckl}\\
	 d_{k,p}(\rho) &:= \int_0^\pi \cos^k\theta g_{\eta(\rho)}(\theta) \sin^{n-2+p}\theta \d \theta, \label{eq:def_dkl}
\end{align} 
where the function $h_\eta$ is defined in Proposition \ref{prop:GCI_property} and the function $g_\eta$ is given in \eqref{lem:var_trafo_G_h}.
It holds that $\frac{c_{3,2}}{c_{1,2}} \geq 0$ and  $d_{2,0}\geq 0$. 

In the case of $a=2$, the right hand side of \eqref{eq:macro_Omega} does not vanish and $\Pi_3$ is given by 
\begin{multline} \label{def:Pi3}
	 \frac{8c_{1,2}(\rho)\eta(\rho) }{(n-1)}\Pi_3(\Omega,\rho) =  (\ell^2 - d^2) [(H^r_2 : (D^2_x \rho) )]_{[2,3:1,2]} -  2 \chi^2 d^2 [H^r_2 : \Delta_x (\rho H_2)]_{[2,3:1,2]} \\
	  - \chi^2 (\ell^2 - d^2) \left ( [H^r_4: (D^2_x \otimes (\rho H_2))]_{[2,3,4,5:1,2,3,4]}  + [H^r_2\otimes D^2_x : (\rho H_4)]_{[2,3,4,5:1,2,3,4]} \right )
\end{multline} where in the case of dimension  $n=2$ we have that
\begin{align}
	H_2 &= d_{2,0} (\Omega\otimes \Omega) + d_{0,2}(\Omega^\perp \otimes \Omega^\perp), \label{eq:H2d2}\\
	H_2^r &= d_{2,2} \Omega^\perp \otimes\left( (\Omega^\perp \otimes  \Omega) + (\Omega  \otimes \Omega^\perp)\right), \label{eq:H2rd2}\\
	H_4 &= d_{4,0} (\Omega\otimes \Omega \otimes \Omega\otimes \Omega) + d_{2,2} S_{2\Omega,2\Omega^\perp}(\Omega,\Omega^\perp)+ d_{0,4} (\Omega^\perp \otimes \Omega^\perp\otimes \Omega^\perp\otimes \Omega^\perp), \label{eq:H4d2}\\
	H_4^r &=\Omega^\perp \otimes \left( d_{4,2} S_{3\Omega,\Omega^\perp}(\Omega,\Omega^\perp) + d_{2,4} S_{\Omega,3\Omega^\perp}(\Omega,\Omega^\perp)\right), \label{eq:H4rd2}
\end{align} with 
\begin{align} \label{eq:S22}
S_{2\O,2\O^\perp}(\O,\O^\perp) &:= (\O \otimes \O^\perp + \O^\perp \otimes  \O)  \otimes (\O \otimes \O^\perp + \O^\perp \otimes  \O) \\
& \quad +  \O \otimes \O \otimes \O^\perp \otimes \O^\perp + \O^\perp \otimes \O^\perp \otimes \O \otimes \O,
\end{align} and
\begin{align} \label{eq:S31}
		S_{3\O,\O^\perp}(\O,\O^\perp) := (\O \otimes \O) \otimes (\O \otimes \O^\perp + \O^\perp \otimes  \O) + (\O \otimes \O^\perp + \O^\perp \otimes \O) \otimes (\O \otimes \O) 
\end{align} and $S_{\Omega,3\Omega^\perp}$ is defined analogously as $S_{3\Omega,\Omega^\perp}$ (by exchanging the values of $\Omega$ and $\Omega^\perp$).

In dimension $n\geq 3$, the values of $H_2$, $H_2^r$, $H_4$, $H_4^r$ are more complex and are given in Proposition \ref{lem:Hdimension3} below. 
\end{thm}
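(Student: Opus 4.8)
The plan is to carry out a formal hydrodynamic (moment) limit of the rescaled kinetic equation \eqref{eq:rescaled_kinetic}, in the spirit of the Degond--Motsch / Generalized Collision Invariant program, under the blanket regularity of Assumption \ref{as:A}. The starting observation is that the collision term carries the singular prefactor $D_u/\eps^a$; hence the leading order in $\eps$ forces $C(f^0)=0$, and Lemma \ref{lem:stable_equilibria} pins down the equilibrium on the set $\{\rho>\rho_*\}$ as $f^0=\rho\,G_{\eta(\rho)A_\Omega}$, with $\rho=\rho(t,x)$ and $\Omega=\Omega(t,x)$ the two macroscopic unknowns. Their evolution equations are then obtained as solvability (orthogonality) conditions: testing \eqref{eq:rescaled_kinetic} against the conserved quantity $1$ produces the equation for $\rho$, while testing against the generalized collision invariants (GCIs) of \cite{DFL22} produces the equation for $\Omega$. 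Throughout I would systematically replace $f^\eps$ by its limit $f^0$ and use the consistency relations \eqref{eq:consistency_relation}--\eqref{eq:limit_rhoQ}, i.e.\ $\rho Q_{f^0}=\rho S_2(\eta(\rho))A_\Omega$ and $\alpha\rho Q_{f^0}=\eta(\rho)A_\Omega$, to turn every $u$-integral into an explicit function of $\rho$ and $\Omega$.

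For the density equation I would integrate \eqref{eq:rescaled_kinetic} over $u\in\mathbb S^{n-1}$. The two orientational-divergence terms $\nabla_u\cdot(\cdots)$ and the collision term $C(f^\eps)$ integrate to zero (mass is a classical collision invariant, $C$ being a $u$-divergence), so no $\eps^{-a}$ singularity survives and one may pass directly to the limit. What remains is $\partial_t\rho$ together with the spatial fluxes from $\mu\chi^2\nabla_x\!\cdot\!\int \nabla_x(u^T\rho Q u)f^0\,du$, the term $\mu(\chi^2/n-1)\nabla_x\!\cdot\!(\rho\nabla_x\rho)$, and $-D_x\Delta_x\rho$. Substituting $f^0=\rho G_{\eta A_\Omega}$ and splitting $\nabla_x(u^T\rho Q u)$ into its $\nabla_x\rho$ and $\nabla_x\Omega$ parts, the point to verify is that the $\nabla_x\Omega$ contributions cancel after integration against the Gibbs measure — this is the source of the claimed independence of the $\rho$-equation from $\Omega$, and it should follow from the rotational symmetry of $G_{\eta A_\Omega}$ about $\Omega$ together with $|\Omega|^2=1$. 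Collecting the surviving terms and using $Q_{f^0}=S_2(\eta)A_\Omega$ and the chain rule $\nabla_x\eta=\eta'(\rho)\nabla_x\rho$ yields \eqref{eq:macro_rho} with the coefficient $K(\eta(\rho))$ of \eqref{eq:Ceta_coeff_porousmedium}.

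For the direction equation I would use the GCIs associated with $A_\Omega$, built from the auxiliary function $h_\eta$ of Proposition \ref{prop:GCI_property}: these are vector fields $\psi_\Omega$ tangent to $\Omega^\perp$ designed so that $\int_{\mathbb S^{n-1}} C(f)\,\psi_{\Omega_f}\,du=0$. Multiplying \eqref{eq:rescaled_kinetic} by $\psi_\Omega$ and integrating annihilates the $\eps^{-a}$ collision term to leading order, leaving an $O(1)$ balance. Passing to the limit and evaluating all moments against $f^0$ — now conveniently reduced to one-dimensional integrals in the polar angle $\theta$ via the change of variables \eqref{eq:change_of_variables}, expressed through the moment constants $c_{k,p},d_{k,p}$ of \eqref{eq:def_ckl}--\eqref{eq:def_dkl} and the functions $g_\eta,h_\eta$ — produces the transport coefficient $\Pi_2(\rho)$ and the diffusion term $\mu(\sigma-\nu)P_{\Omega^\perp}\Delta_x\Omega$. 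Projecting onto $\Omega^\perp$ (equivalently, differentiating the constraint $|\Omega|^2=1$) is what extracts a closed equation for $\Omega$. Finally, the $B_f$-term enters with the factor $\eps^{2-a}$ and therefore survives in the limit only when $a=2$; expanding $B_f$ from \eqref{eq:B_f} with $\Sigma=\gamma_1+\gamma_2$ and integrating against $\psi_\Omega$ gives the tensorial source $\Pi_3(\Omega,\rho)$ of \eqref{def:Pi3}.

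I expect the main obstacle to be the explicit computation of $\Pi_3$. Because $B_f$ carries two spatial derivatives and the full anisotropic matrix $\Sigma(u_1,u_2)$, testing it against the GCI requires integrating fourth- and fifth-order $u$-moments of the Gibbs distribution weighted by $h_\eta$, which is precisely what assembles the $4$- and $5$-tensors $H_2,H_2^r,H_4,H_4^r$ and the symmetrized tensors $S_{2\Omega,2\Omega^\perp}$, $S_{3\Omega,\Omega^\perp}$; keeping track of the contractions of Definition \ref{dfn:tensor_operations} and separating the $(\ell^2-d^2)$ and $\chi^2 d^2$ pieces that distinguish prolate from oblate particles is the delicate bookkeeping. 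A secondary but genuine difficulty is verifying the vanishing of the $\nabla_x\Omega$ flux in the density equation and checking the nondegeneracy $c_{1,2}\neq 0$ needed to normalise $\Pi_3$; the sign constraint $\nu>\sigma$ (i.e.\ $D_x/\mu>D_u/\lambda$) then emerges as the requirement that the effective diffusion coefficient $\mu(\nu-\sigma)$ for $\Omega$ in \eqref{eq:macro_Omega} be positive.
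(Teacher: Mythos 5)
Your proposal is correct and follows essentially the same route as the paper: the singular $\eps^{-a}$ collision term forces $f^0=\rho\, G_{\eta(\rho)A_\Omega}$ via Lemma \ref{lem:stable_equilibria}, the density equation comes from integrating the kinetic equation in $u$ (with the $\nabla_x\Omega$ flux vanishing by odd symmetry of the Gibbs moments), and the $\Omega$-equation from testing against the GCI of \cite{DFL22}, with the $B_f$ contribution surviving only for $a=2$. The only cosmetic difference is in the treatment of $\Pi_3$: the paper first integrates by parts twice and invokes the defining equation of the GCI (Lemma \ref{lem:Bf_GCI}) so that the tensors $H_2,H_2^r,H_4,H_4^r$ are moments of the Gibbs distribution with at most a factor $(u\cdot\Omega)P_{\Omega^\perp}u$ and free of $h_\eta$ (which enters only through the normalization constant $c_{1,2}$), rather than the $h_\eta$-weighted moments your sketch describes.
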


\begin{prp}[Dimension $n\geq 3$] 
\label{lem:Hdimension3}

For $n\geq 3$ the functions $H_2$, $H_2^r$, $H_4$, $H_4^r$ are given by 
\begin{align}
H_2(\eta,\Omega,\Omega^\perp) &= d_{2,0}(\Omega\otimes \Omega) + \frac{d_{0,2}}{n-1}\pp \label{eq:H2inlemma}\\
H_2^r(\eta, \Omega,\Omega^\perp) &= \frac{d_{2,2}}{n-1}\left( \pp\otimes \Omega + [\pp \otimes \O\otimes \pp]_{:24} \right) \label{eq:H2rinlemma}\\
H_4(\eta,\Omega,\Omega^\perp) & = d_{4,0} (\Omega\otimes \Omega\otimes \Omega\otimes \Omega)+ \frac{d_{0,4}}{(n-1)(n+1)} \Gamma\\
     &\quad + \frac{d_{2,2}}{n-1} \Big( \pp \otimes \Omega\otimes \Omega + \Omega \otimes \pp \otimes \Omega + \Omega \otimes \Omega\otimes \pp\\
     &\quad + [\pp\otimes \O \otimes \pp \otimes \O]_{:24} + [\pp \otimes \O \otimes \O \otimes \pp]_{:25} + [\O \otimes \pp \otimes \O \otimes \pp]_{:35}
    \Big) \label{eq:H4inlemma}\\
H_4^r(\eta,\O,\O^\perp) & = \frac{d_{2,4}}{(n-1)(n+1)} T + \frac{d_{4,2}}{n-1}\Big(\pp\otimes \O \otimes\O\otimes\O
    + [\pp \otimes \O\otimes\pp\otimes\O\otimes \O]_{:24}\\
    &\quad+ [\pp\otimes\O\otimes\O\otimes\pp\otimes\O]_{:25} + [\pp \otimes \O\otimes\O\otimes\O\otimes\pp]_{:26}\Big), \label{eq:H4rinlemma}
\end{align}
Where $\Gamma$ is symmetric order-4 tensor defined by
\begin{align}\label{eq:Gamma}
	\Gamma = 3 Sym (\proopp \otimes \proopp ). 
\end{align}
In Cartesian coordinates $\Gamma$ is given by
\begin{align*}
	\Gamma_{ijkl}= (\proopp)_{ij} (\proopp)_{kl} + (\proopp)_{ik}(\proopp)_{jl} + (\proopp)_{il}(\proopp)_{jk}.
\end{align*}
Also, $T$ is a 5-tensor with components
\begin{align} \label{eq:defT}
    T_{ijklp}= \tilde S_{ijkl}\Omega_p + \tilde S_{ijkp}\Omega_l + \tilde S_{ijpl}\Omega_k + \tilde S_{ipkl} \Omega_j
\end{align}
where 
 \begin{align} 
        \tilde S_{iiii} = 3\Gamma_{iiii}, \quad
        \tilde S_{iijj} = \Gamma_{iijj}, \quad
        \tilde S_{ijij} = \Gamma_{ijij}, \quad
        \tilde S_{ijji} = \Gamma_{ijji},
    \end{align}
 for   $i,j \in \{1,...,n\}$ such that $i \neq j$, otherwise $\tilde S_{ijkl}$ is zero.
 Above we used the notation $$[\pp\otimes\O\otimes \pp]_{:24}$$ to denote a 3-tensor whose $(i,j,k)$ component is given by 
\begin{align*}
   \Big( [\pp\otimes \O\otimes \pp]_{:24} \Big)_{ijk}= \sum_{p=1}^n (\pp)_{ip} \O_j (\pp)_{pk},
\end{align*}
i.e., in this case we contract the full tensor $\pp\otimes\O\otimes \pp$, which is a 5-tensor, by summing over the second and the forth indices (that is why we have the sub-index notation $:24$). One defines the other contractions analogously.
\end{prp}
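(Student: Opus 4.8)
The plan is to treat each of $H_2,H_2^r,H_4,H_4^r$ as an \emph{axially symmetric moment} of the equilibrium $\GetaA$ about the axis $\O$ and to reduce its computation to the scalar integrals $d_{k,p}$ of \eqref{eq:def_dkl}, exploiting rotational invariance. The starting observation is that $\GetaA(u)=\frac{1}{Z_\eta}e^{\eta(u\cdot\O)^2}$ depends on $u$ only through $u\cdot\O$, hence is invariant under the subgroup $O(n-1)\subset O(n)$ of rotations fixing $\O$. Consequently every tensor obtained by integrating a polynomial in the entries of $u$ against $\GetaA$ is \emph{transversely isotropic}: it must lie in the span of the symmetrized tensor products of $\O$ and $\pp=\Id-\O\otimes\O$. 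This is precisely the ansatz realized by the right-hand sides of \eqref{eq:H2inlemma}--\eqref{eq:H4rinlemma}, with $\Gamma$ in \eqref{eq:Gamma} playing the role of the symmetrized transverse identity and $T$ in \eqref{eq:defT} its rank-five analogue. So the structural form of the answer is forced by symmetry; only the scalar coefficients remain to be identified.

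To pin these down I would use the adapted decomposition $u=\cos\theta\,\O+\sin\theta\,w$ with $\theta\in[0,\pi]$ and $w\in\S^{n-2}\subset\opp$, under which $\ints F\,\d u=\int_0^\pi\!\int_{\S^{n-2}}F\,\sin^{n-2}\theta\,\d w\,\d\theta$. Expanding the tensor power $u^{\otimes k}$ binomially into the longitudinal part $\cos\theta\,\O$ and the transverse part $\sin\theta\,w$ factors each integral into a $\theta$-integral and a transverse $w$-integral over $\S^{n-2}$. Two parity facts eliminate most terms: since $g_\eta$ depends on $\cos^2\theta$ only, any surviving $\theta$-integral carries an even power of $\cos\theta$, producing exactly the coefficients $d_{k,p}$; and since $\S^{n-2}$ is antipodally symmetric, only even-order $w$-moments survive. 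The needed transverse moments are the standard ones,
\begin{align*}
\frac{1}{|\S^{n-2}|}\int_{\S^{n-2}} w\otimes w \,\d w = \frac{1}{n-1}\pp, \qquad \frac{1}{|\S^{n-2}|}\int_{\S^{n-2}} w^{\otimes 4}\,\d w = \frac{1}{(n-1)(n+1)}\Gamma,
\end{align*}
which already explain the normalizations $\tfrac{1}{n-1}$ and $\tfrac{1}{(n-1)(n+1)}$ in \eqref{eq:H2inlemma}--\eqref{eq:H4inlemma}. Substituting and collecting the longitudinal and transverse contributions reproduces $H_2$ and $H_4$; the individual scalars are then fixed unambiguously by contracting each candidate tensor with copies of $\O$ (extracting the pure-$\cos$ moments $d_{2,0},d_{4,0}$) and by taking transverse traces (extracting $d_{0,2},d_{2,2},d_{0,4}$).

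For the two \emph{rotational} tensors $H_2^r$ and $H_4^r$, which arise with one extra index from testing the $B_f$-term against the generalized collision invariant and integrating by parts on the sphere, the same reduction applies; the only difference is a leading transverse index carried by $\pp$ (equivalently $\opp$). This is why $H_2^r,H_4^r$ have rank one higher than $H_2,H_4$ and always begin with a $\pp$ factor, and why the odd-in-$w$ structure is resolved by pairing that leading transverse factor with the transverse part of the moment so that the net $w$-order stays even. The resulting transverse contractions generate the symmetrized patterns $S_{2\O,2\O^\perp}$ and $S_{3\O,\O^\perp}$ of \eqref{eq:S22}--\eqref{eq:S31} and the five-tensor $T$ of \eqref{eq:defT}, again with coefficients drawn from the $d_{k,p}$, while the single $h_\eta$-weighted integral is separated out as the overall scalar $c_{1,2}$ already displayed as a prefactor in \eqref{def:Pi3}. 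I would finally cross-check against the degenerate case $n=2$, where $\S^{n-2}=\S^0=\{\pm\opp\}$ forces $w=\pm\opp$, $\pp=\opp\otimes\opp$, and $\Gamma,T$ collapse, recovering \eqref{eq:H2d2}--\eqref{eq:H4rd2} exactly.

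I expect the main obstacle to be combinatorial rather than conceptual: correctly enumerating the transversely isotropic basis at ranks four and five and assigning every symmetrization its right index pattern and multiplicity. In particular, establishing that the fully transverse fourth moment equals $\Gamma/((n-1)(n+1))$ relies on the full contraction $\sum_{i,j}\Gamma_{iijj}=(n-1)(n+1)$; identifying the correct index-permutation groups behind $[\pp\otimes\O\otimes\pp]_{:24}$ and its higher analogues; and verifying that the rank-five transverse object assembles into precisely the $\tilde S$-coefficients of $T$. These are the steps most prone to error and will demand careful bookkeeping, with the $n=2$ specialization serving as a running consistency check.
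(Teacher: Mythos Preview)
Your plan is essentially the same as the paper's: decompose $u=\cos\theta\,\O+\sin\theta\,v$, expand the tensor powers, kill the terms that are odd in $v$, and read off each surviving block from the standard $\S^{n-2}$-moments $\int v\otimes v\,\d v=\tfrac{1}{n-1}\pp$ and $\int v^{\otimes 4}\,\d v=\tfrac{1}{(n-1)(n+1)}\Gamma$, with the $\theta$-integrals providing exactly the $d_{k,p}$. The paper simply performs this expansion term by term (calling the pieces $D_1,\dots,D_{10}$), so your more conceptual transverse-isotropy framing is a nice overlay on what is the same computation.

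One small correction: there is no $h_\eta$-weighted integral anywhere in $H_2^r$ or $H_4^r$. After the integration by parts on the sphere (Lemma~\ref{lem:Bf_GCI}), the GCI $h_\eta(u\cdot\O)\pp u$ has been replaced by $(u\cdot\O)\pp u\,\GetaA$, so only $d_{k,p}$ coefficients appear in the $H$'s; the $c_{1,2}$ in front of $\Pi_3$ in \eqref{def:Pi3} comes from dividing the whole $\O$-equation by the coefficient of $\partial_t\O$ produced by $I_1$, not from the $B_f$ block itself.
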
 The proof of the Proposition \ref{lem:Hdimension3} is given in Section \ref{sec:termBf}.

Moreover, from Theorem \ref{th:macroscopic limit} we can deduce the following:

\begin{cor}[Isotropic regime] 
	Let $n \in \{2,3\}$ and $a \in (0, 2]$. Suppose that $f^\eps \to \rho$ as $\eps \to 0$, then it holds that 
	\begin{align} \label{eq:rhowithzeroS2}
		\partial_t \rho - D_x \Delta_x \rho + \mu \lp  \frac{\chi^2}{n} - 1\rp \nabla_x\cdot(\rho\nabla_x\rho)=0.
	\end{align}
\end{cor}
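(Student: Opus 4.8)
The plan is to obtain \eqref{eq:rhowithzeroS2} directly from the kinetic equation \eqref{eq:rescaled_kinetic} by integrating against the uniform measure on the sphere, which is precisely the step in the proof of Theorem \ref{th:macroscopic limit} that produces the density equation \eqref{eq:macro_rho}. The only structural difference is that here the limiting profile is the \emph{isotropic} one, $f^0=\rho$, so the $Q$-tensor contribution drops out.

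First I would integrate \eqref{eq:rescaled_kinetic} over $u\in\mathbb{S}^{n-1}$. Every term of the form $\nabla_u\cdot(\cdots)$ integrates to zero by the divergence theorem on the closed manifold $\mathbb{S}^{n-1}$; in particular the collision term $\frac{D_u}{\eps^a}C(f^\eps)$ and the $\eps^{2-a}$ term built from $B_f$ both vanish upon integration, irrespective of their (possibly singular) prefactors. Using $\int_{\mathbb{S}^{n-1}}f^\eps\,\d u=\rho_{f^\eps}$ and that $\nabla_x\rho_{f^\eps}$ does not depend on $u$, what remains is
\begin{multline*}
\partial_t\rho_{f^\eps} + \mu\chi^2\,\nabla_x\cdot\int_{\mathbb{S}^{n-1}}\nabla_x\big(u^T(\rho_{f^\eps}Q_{f^\eps})u\big)f^\eps\,\d u \\
+ \mu\Big(\tfrac{\chi^2}{n}-1\Big)\nabla_x\cdot(\rho_{f^\eps}\nabla_x\rho_{f^\eps}) - D_x\Delta_x\rho_{f^\eps} = \mathcal{O}(\eps^{\min\{2,3-a\}}).
\end{multline*}

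Next I would pass to the limit $\eps\to 0$ under Assumption \ref{as:A}. By hypothesis $f^\eps\to\rho$ with the limit independent of $u$, so $\rho_{f^\eps}\to\rho$ and, crucially, $Q_{f^0}=0$, since $\int_{\mathbb{S}^{n-1}}(u\otimes u-\tfrac1n\Id)\,\d u=0$ for the uniform distribution (cf.\ \eqref{eq:Qdef}). Hence $\rho_{f^\eps}Q_{f^\eps}\to 0$ together with its spatial derivatives, and the $\chi^2$ transport term vanishes in the limit, while the error term is $\mathcal{O}(\eps^{\text{positive}})\to 0$. This leaves $\partial_t\rho - D_x\Delta_x\rho + \mu\big(\tfrac{\chi^2}{n}-1\big)\nabla_x\cdot(\rho\nabla_x\rho)=0$, which is exactly \eqref{eq:rhowithzeroS2}. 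Equivalently, one can read the result off the coefficient $K(\eta(\rho))=1-\tfrac{\chi^2}{n}-\sigma\tfrac{n-1}{n}S_2(\eta(\rho))\eta'(\rho)$ in \eqref{eq:Ceta_coeff_porousmedium} by observing that the isotropic profile corresponds to vanishing order parameter $S_2=0$, so that $K=1-\tfrac{\chi^2}{n}$.

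The only point requiring care, rather than a genuine obstacle, is justifying that the $Q$-dependent flux and its spatial derivatives converge to those of the isotropic limit; this is precisely what Assumption \ref{as:A} provides. No analysis of the $\Omega$-equation is needed here, since in the isotropic regime $\rho<\rho_*$ (Proposition \ref{prop:def_eta}) there is no well-defined principal direction, and correspondingly the right-hand side structure of \eqref{eq:macro_Omega} plays no role.
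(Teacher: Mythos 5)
Your argument is correct and is essentially the paper's own proof, which simply reruns the derivation of the density equation \eqref{eq:macro_rho} in the proof of Theorem \ref{th:macroscopic limit}: integrate \eqref{eq:rescaled_kinetic} over $u\in\S^{n-1}$ so that the $C(f^\eps)$ and $B_f$ terms vanish by the divergence theorem, then pass to the limit under Assumption \ref{as:A}, the only change being that the uniform limit $f^0=\rho$ forces $Q_{f^0}=0$ and removes the $\chi^2$-transport term. Your closing observation that this matches $K$ from \eqref{eq:Ceta_coeff_porousmedium} with $S_2=0$ is a reasonable sanity check but is not needed for the proof.
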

\begin{proof} The proof of the corollary directly follows the steps of the proof of Theorem \ref{th:macroscopic limit} to obtain the equation for $\rho$.
\end{proof} 
The proof of Theorem \ref{th:macroscopic limit} is postponed to Section \ref{sec:proof}. Next, we comment on Theorem \ref{th:macroscopic limit}, particularly on Equations \eqref{eq:macro_rho} and \eqref{eq:macro_Omega}.

\subsection{Comments on the continuum equations} \label{ssec:interpretation}
In this section we discuss the macroscopic equations for the mass density and the mean-nematic direction \eqref{eq:macro_rho} and \eqref{eq:macro_Omega}, respectively. 

\subsubsection{The equation for the mass density}

Equation \eqref{eq:macro_rho} is a diffusion-type equation in divergence form and hence the total mass $\int_{\R^n} \rho \d x$ is preserved over time. The equation is formed by the sum of a linear diffusion and a non-linear diffusion term.
The purely diffusive term with the diffusion constant $D_x$ arises from the Brownian motion at the particle level. The second term on the right-hand-side resembles a porous medium equation, but with a diffusion coefficient $K$ that depends also on the density $\rho$. Numerical experiments indicate that $K(\eta(\rho))$, as given in Equation \eqref{eq:Ceta_coeff_porousmedium}, is non-negative, see Remark \ref{rem:K_non_negative}.

\paragraph{Effect of particle anisotropy.} Interestingly, the equation for the mass density  is independent of the mean-nematic direction $\Omega$. However, the anisotropy of the particles modifies Equation \eqref{eq:macro_rho}. To understand the effect of particle anisotropy, characterised by $\chi$ and defined in \eqref{eq:def-chi}, we first consider spherical particles. For spherical particles, $\chi=0$, and the potential $V_b$ becomes isotropic, i.e.,
\begin{align*}
V_b(x_2-x_1)= (4\pi)^{-n/2}\exp\left(-\frac{|x_2-x_1|^2}{2d^2}\right).
\end{align*}
In this case, particles' positions and directions are completely decoupled in the discrete system \eqref{eq:discrete_system} since $\nabla_u V_b=0$ and $\nabla_x V_b$ is independent of $u$. Moreover, one straightforwardly obtains the following equation for $\rho$ (by directly integrating equation \eqref{eq:dimless_kinetic_eq} and noting that  $W_f=\rho$ in this case):
\begin{align} \label{eq:isotropic_equation}
	\partial_t \rho = D_x\Delta_x \rho + \mu \nabla_x\cdot \lp \rho \nabla_x \rho \rp, \qquad \mbox{for }\quad \chi=0, 
\end{align} which is the porous medium equation with linear diffusion. Notice that it corresponds to Equation \eqref{eq:rhowithzeroS2}, where we assumed that $f^\eps\to \rho$ as $\eps \to 0$. Therefore, the difference between the isotropic and the anisotropic cases is encapsulated in the non-linear diffusion coefficient $K(\eta) \neq 1$.

Now, notice that
\begin{align} \label{eq:eta_and_S_2}
	K(\eta)
 \leq 1-\frac{\chi^2}{n},
\end{align}
since $S_2(\eta)\in (0,1)$ and $\eta$ is non-decreasing (see also Figure \ref{fig:eta_and_S_2}).

\begin{remark}[Non-negativity of the diffusion coefficient $K$] \label{rem:K_non_negative}
\begin{figure}[ht!] 
    \centering
\begin{subfigure}[b]{0.45\textwidth}
         \centering
         \includegraphics[width=1\linewidth]
         {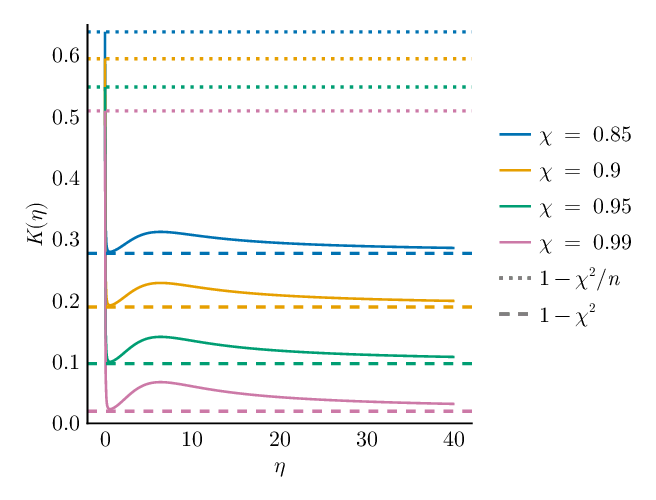}
         \caption{Evolution of $K(\eta)$ with respect to $\eta$.}
         \label{fig:K_bounds_a}
     \end{subfigure} 
    \begin{subfigure}[b]{0.45\textwidth}
         \centering
         \includegraphics[width=1\linewidth]
         {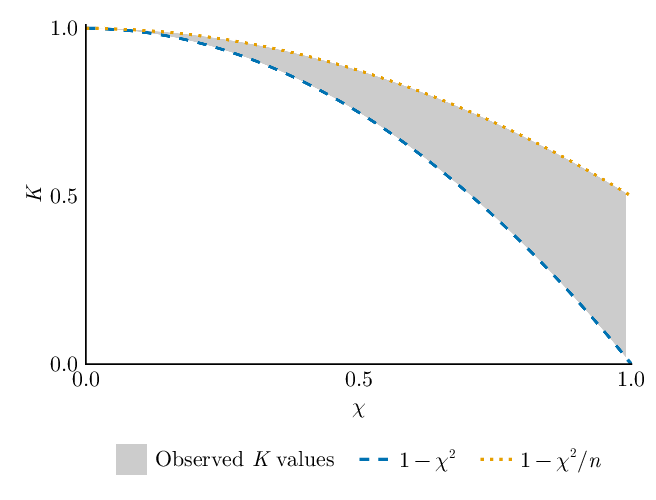}
         \caption{Range of $K(\eta)$ with respect to $\chi$.}
         \label{fig:K_bounds_b}
     \end{subfigure}
    \caption{(a) Time evolution of the diffusion coefficient $K(\eta)$ with respect to $\eta$ with different $\chi$ values. The dotted lines and the dashed lines correspond to $1-\chi^2/n$ (upper bound for $K$) and $1-\chi^2$ (lower bound for $K$), respectively, for the matching color $\chi$ values. (b) The range of $K(\eta)$ for $\chi \in (0,1)$ and $\sigma \in [2^{-8}, 2^{8}]$. The dotted and the dashed lines show the upper and the (numerical) lower bound for $K$, respectively.}
    \label{fig:K_bounds}
\end{figure}
Even though, we do not have an analytical guarantee that $K$ is non-negative, numerical simulations suggest that $K$ actually has a lower bound $1-\chi^2 \geq 0$. Figure \ref{fig:K_bounds_a} shows the evolution of $K$ as $\eta \geq 0$ varies for different values of $\chi$. The dashed lines in matching colors represent the numerical lower bound for $K$ and the dotted lines display the upper bound $1-\frac{\chi^2}{n}$ for $K$. Moreover, in Figure \ref{fig:K_bounds_b}, the gray area shows the range of $K$ values with respect to $\chi$ when other parameters, i.e. $\sigma$, also vary. Here, we can numerically further confirm that the lower bound for observed $K$ values is $1-\chi^2$, denoted by the blue dashed line. Note also that, in dimension $n=3$, $\chi$ can take negative values, i.e., $\chi \in [-1,1]$. However, this does not change the value of $K$ since only $\chi^2$ is involved in the computation of $K$.  That is why we only display $\chi \in[0,1]$ in Figure \ref{fig:K_bounds_b} and naturally it would be symmetric for $\chi \in[-1,0]$. Moreover, our numerical experiments include how $K$ changes with respect to $\rho$ for various $\chi \in (0,1)$ values, see Figure \ref{fig:K_bounds_2}, where, again, $K$ remains non-negative.
\end{remark}

Assuming that $K\geq 0$, the particle anisotropy contributes to a slowdown in the non-linear diffusion. In dimension $n=3$, when the particles are rods ($\chi=1$) or flat disks ($\chi=-1$) we have that $K(\eta) \leq 2/3$; thus, the original speed of diffusion $\mu$ is reduced to at least two-thirds of the anisotropic case. One may hypothesize that this effect is due to anisotropic particles being able to pack and occupy less volume when aligned, so they produce less diffusion. Indeed, our numerical investigations regarding the effect of particle anisotropy on the global alingment also confirm that particles with higher anisotropy reach total global alignment faster. We observe that the particles which are closer to perfect circle in shape ($\chi \approx 0$) and low in density do not reach alignment by the end of the simulation time, whereas particles which are more elongated ellipses ($\chi \approx 1$) with higher densities align globally in a shorter time frame, see, e.g., Figure \ref{fig:vary_l_d_2}. We refer to Section \ref{ssec:sim_results} for a detailed discussion.

\subsubsection{The equation for the mean-nematic direction}

First, we remark that the equation for the mean-nematic direction $\Omega$, Equation \eqref{eq:macro_Omega}, is non-conservative. It can be verified that $|\Omega|(t)$ remains constant over time, as all the terms lie within the space spanned by $\Omega^\perp$. Consequently, $\Omega$ stays on the sphere at all times. 

The term $\Pi_2$ corresponds to transport of $\Omega$ in the direction $\nabla_x\rho$. However, the sign of $\Pi_2(\rho)$ is not immediately clear, or it is uncertain if it changes sign during the dynamics. Notably, $\Pi_2$ would vanish if $\mu=0$, indicating that this term arises from the repulsive potential acting on particle positions. 

The last term on the left-hand side of \eqref{eq:macro_Omega} corresponds to diffusion in the direction $\Omega$. The projection term $P_{\Omega^{\perp}}$ ensures that $\Omega$ remains on the sphere. One can check that when particles are spherical, there is no equation for $\Omega$, which is expected since the direction $\Omega$ does not influence the dynamics in this case. If $\mu=0$, the diffusion constant reduces to $D_x$, meaning that, in this case, the diffusion  originates only from the Brownian motion of the positions of the particles. Intriguingly, the constant $\mu \sigma$ introduces an anti-diffusion effect.

\paragraph{The case $a\in(0,2)$.} When $a\in(0,2)$, the right-hand side of \eqref{eq:macro_Omega} vanishes. In this case, the dynamics for $\rho$ and $\Omega$ are identical for prolate and oblate particles with the same values of $\chi^2$. Consequently, at the macroscopic level, the equations do not differentiate between these two types of particles.

\subsubsection{The case \texorpdfstring{$a=2$}{a=2}} \label{ssec:Pi_3}

When $a=2$, the term $\Pi_3$, given by Equation \eqref{def:Pi3}, does not vanish. This term gives the effect of the repulsive potential on the directionality of the particles. Specifically, from Lemma \ref{lem:rescaling the potential}, the potential $V^\eps_f$ expands as
$$V_f^\eps = W_f +\eps^2 B_f +\mathcal{O}(\eps^3),$$
where $W_f$ is a nematic-alignment force, and the term $B_f$ - which generates $\Pi_3$ - arises from the specific form of the repulsive potential $V_b$ considered. The term $\Pi_3$ involves either second-order differential operators or products of first-order differential operators.

Interpreting $\Pi_3$ is not straightforward, but it is clearly distinct from the other terms in Equation \eqref{eq:macro_Omega}. Notably, $\Pi_3$ is the only term where the distinction between oblate ($\ell<d$) and prolate ($\ell>d$) particles matter. Specifically, oblate and prolate particles exhibit opposite signs for the first and  last two terms of $\Pi_3$. If the diffusion constants are positive for prolate particles, they will be negative for oblate particles, and vice versa. As a result, the qualitative behavior of oblate and prolate particles will differ significantly. 

Notice that the second term in $\Pi_3$ is solely influenced by the length $d$. If $d=0$ (i.e., if the particles are rod-shaped), this term disappears.

To simplify the interpretation of $\Pi_3$, in the next two paragraphs we focus only on dimension $n=2$.

\paragraph{Anisotropic equilibrium ($d_{2,0}\neq d_{0,2}$).} One can expand the contractions in $\Pi_3$ further considering expressions \eqref{eq:H2d2}-\eqref{eq:H4rd2}. For example, for the second term in \eqref{def:Pi3} we have that:
\begin{align*}
	[H_2^r: (\Delta_x(\rho H_2))]_{[2,3:1,2]} &= 2d_{2,2}d_{2,0}\Omega^\perp \left( \rho(\Omega^\perp\cdot \Delta_x\Omega) + 2\Omega^\perp \cdot [(\nabla_x \rho \cdot \nabla_x)\Omega]\right)\\
	&\, + 2d_{2,2}d_{0,2}\Omega^\perp\left(\rho(\Omega \cdot \Delta_x\Omega^\perp)+ 2\Omega \cdot[(\nabla_x \rho \cdot \nabla_x)\Omega^\perp]  \right).
\end{align*}
For these equations, it is critical that $d_{2,0}\neq d_{0,2}$, since, if they were the same, then $H_2$ would be equal to the identity and this would imply that 
\begin{align*}
	[H^r_2:(\Delta\rho \Id)]_{[2,3:1,2]}=0,
\end{align*} since $(\Omega\otimes \Omega^\perp):\Id=\Omega\cdot \Omega^\perp=0$. Moreover, if $d_{2,0}= d_{0,2}$, then the third term of \eqref{def:Pi3} would become:
\begin{align}
[H_4^r: D_x^2\otimes \rho\Id]_{[2,3,4,5:1,2,3,4]} &= d_{2,0}(d_{2,4}+d_{4,2})\Omega^\perp[(\Omega\otimes \Omega^\perp+\Omega^\perp \otimes \Omega): (D_x^2\rho)].
\end{align}

Therefore, the fact that $d_{2,0}\neq d_{0,2}$ creates an anisotropy that produces derivatives in $\Omega$ and $\Omega^\perp$. If $\GetaA$ was the uniform distribution (i.e., if $\ell=d$), then we would have that $d_{2,0} = d_{0,2}$. Thus, this effect is linked to the anisotropy of the particles. 

\paragraph{Cross-diffusion term.} The term of $\Pi_3$ containing the second derivatives of $\rho$ is
\begin{align}
	b(\rho) \rho\Omega^\perp [(\Omega\otimes \Omega^\perp + \Omega^\perp \otimes \Omega): D^2_x\rho],
\end{align} where 
\begin{align*}
b(\rho):= \frac{n-1}{8\eta(\rho) c_{1,2}}(\ell^2-d^2)  \left ( d_{2,2}
	-\chi^2(d_{4,2}d_{2,0}+d_{2,4}d_{0,2})	
        -2\chi^2  d^2_{2,2}   \right ).
\end{align*}
This term makes the system \eqref{eq:macro_rho}-\eqref{eq:macro_Omega} of a cross-diffusion type.
In dimension $n=3$, it is clear that if the diffusion coefficient $b(\rho)$ is positive for prolate particles then it is negative for oblate particles (and vice versa). This will particularly complicate the study of the well-posedness of the equations, which we leave for future work.

\subsection{Parameter regime for the validity of the continuum equations} \label{ssec:well_posedness}
It remains an open question to determine for which parameter regime the macroscopic equations \eqref{eq:macro_rho}-\eqref{eq:macro_Omega} are well-posed. In particular,  the consistency relation \eqref{eq:consistency_relation} imposes a major constraint on the dynamics. Particularly, when $\rho<\rho_*$, the macroscopic equations lose their validity. In this section, we illustrate this effect with a couple of examples.

\paragraph{Diffusion term in $\Omega$.} For the well-posedness of the equations, we require the diffusion constant in $\Omega$, given by $\mu(\nu-\sigma)$, (see the left-hand side of \eqref{eq:macro_Omega}), to be positive. This is equivalent to the condition:
\begin{align*}
	\frac{\mu}{D_x}< \frac{\lambda}{D_u}.
\end{align*}
This condition establishes a balance between the noise intensities $D_x,\, D_u$ and the intensity of the alignment interactions $\mu,\, \lambda$. If $\mu=\lambda$, then we must have $D_u< D_x$ (i.e., the noise intensity in the directions must be smaller than in the positions). If $D_x=D_u$, then $\mu<\lambda$ must hold (i.e., the intensity of the potential must be stronger for the directions than for the positions). Additionally, for well-posedness, it is required that $D_x>0$. These parameter constraints suggest that there are certain aspects of the microscopic dynamics that the macroscopic equations fail to capture. Moreover, in Section \ref{ssec:particle_sims}, we present some numerical tests on this condition at the microscopic level, simulating the stochastic particle system \eqref{eq:discrete_system}. The results indicate that when the continuum equations are ill-posed, i.e., when $\sigma \geq \nu$, the particles do not reach any type of alignment. This can be observed particularly in Figures \ref{fig:vary_DxDu} and \ref{fig:vary_lambda_mu}. For further details, we refer the reader to Section \ref{ssec:particle_sims}. 

\paragraph{The critical case $\nu=\sigma$.} Let us consider the critical case where $\nu=\sigma$. In this case the diffusive term in $\Omega$ vanishes. We explore this scenario in more detail.\\
Notice that we can rewrite the kinetic equation \eqref{eq:rescaled_kinetic} (for $a \in (0,2)$) as:
\begin{multline}\label{eq:particular_case_rescaled}
	\p_t f^\eps - D_x \nabla_x \cdot \lp \nabla_x f^\eps - \frac{\chi^2\mu}{D_x}f^\eps \nabla_x \lp u^T (\rho_f Q_f )u \rp  \rp + \mu \lp \frac{\chi^2}{n}-1\rp \nabla_x \cdot ((\nabla_x \rho_{f^\eps}) f^\eps) \\ = \frac{D_u }{\eps}C(f^\eps)+ \mc O (\eps^2),
\end{multline} where, we recall,
\begin{align*}
	C(f) =\nabla_u \cdot \lp \nabla_u f - \frac{\chi^2\lambda}{D_u}f\nabla_u \lp u^T (\rho_f Q_f )u \rp \rp.
\end{align*}
Notice that the second term in \eqref{eq:particular_case_rescaled} has the same shape as the operator $C$ except that the derivatives are in $x$ and the parameter values differ.

If $\nu=\sigma$, i.e., $ \frac{D_x}{\mu} = \frac{D_u}{\lambda}$, the second term in \eqref{eq:particular_case_rescaled} can be rewritten as
\begin{align}\label{eq:mu_equal_sigma}
	- D_x \nabla_x \cdot \left(G_{\alpha \rho_f Q_f}\nabla_x\left(\frac{f^\eps}{G_{\alpha \rho_f Q_f}}\right) \right), \quad \text{with   } \alpha = \frac{\chi^2}{\sigma}.
\end{align}
By Lemma \ref{lem:stable_equilibria}, when $f^\eps \to \rho \GetaA$, we have that  $G_{\alpha \rho_f Q_f}\to \GetaA$ and in the limit \eqref{eq:mu_equal_sigma} becomes 
\begin{align*}
	-D_x\nabla_x\cdot \left(\GetaA \nabla_x \rho) \right). 
\end{align*} 
Integrating \eqref{eq:particular_case_rescaled}, and using the fact that $\int \GetaA \d u =1$, we obtain Equation \eqref{eq:rhowithzeroS2}, which is the equation obtained in the isotropic regime corresponding to $f^\eps \to \rho$ as $\eps\to 0$.  A possible explanation for this is that when $\mu = \sigma$ the density $\rho$ declines rapidly below the critical density $\rho_*$ due to the diffusive processes involved.  

\medskip
In conclusion, we emphasize two main takeaways on the validity of the continuum equations. Firstly, the macroscopic limit does not hold for arbitrary parameter values. Secondly, the macroscopic equation ceases to be valid when $\rho < \rho_*$ (at the points where Equation \eqref{eq:rhowithzeroS2} holds and we enter the isotropic regime). However, we have no prior information on the specific points in space or the times at which this may occur.

\medskip
We finish with a remark concerning the validity of the continuum equations for higher dimensions.

\begin{remark}[Higher dimensions]
	The macroscopic limit presented in this paper only holds for $n\in\{2,3\}$ since in Lemma \ref{lem:stable_equilibria} we only obtain stable equilibria for these dimensions. Existence of stable equilibria for $n\geq4$ is an open problem. However, if Lemma \ref{lem:stable_equilibria} holds for $n \geq 4$, we could conjecture that Theorem \ref{th:macroscopic limit} would also hold for $n \geq 4$.
\end{remark}

\section{Repulsive potentials and particle simulations}

\label{sec:repulsion_general}

In this section, we comment on the different Gaussian-type repulsive potentials, motivate our choice \eqref{eq:weighted_Gaussian_potential} and present numerical simulations for particles following the dynamics \eqref{eq:discrete_system_X}-\eqref{eq:discrete_system_u}.

\subsection{Gaussian-type anisotropic repulsive potentials}
\label{ssec:potentials}

Earlier, we described the binary interactions between identical particles via an anisotropic potential $V_b$ given by Equation \eqref{eq:potentialG}. We comment on two particular choices of the scaling factor $b \geq 0$ in \eqref{eq:potentialG}.

\paragraph{The Berne-Pechukas potential.}
One of the first soft anisotropic potentials that appeared in the literature is that of a soft repulsive potential between two spheroids using the Gaussian potential. It was first introduced by Berne and Pechukas in \cite{BP72}. In \cite{BP72}, the scaling factor  $b(u_1, u_2)$ is given by
\begin{equation} \label{eq:rescaling_BP}
	b_{\text{BP}}(u_1, u_2):= (1-\chi^2(u_1\cdot u_2)^2)^{-1/2}=\mbox{det}(\Sigma)^{-1/2},
\end{equation}
where $\chi$ is defined by \eqref{eq:def-chi}. This is a very natural choice for the scaling factor $b$, as it guarantees that $V_b$ is a multivariate Gaussian distribution in the variable $R$ with a covariance matrix given by $ \Sigma/2$.

\paragraph{The non-scaled Gaussian potential.}
Berne and Pechukas in \cite{BP72} considered a scaling factor $b$ such that the potential \eqref{eq:potentialG} is normalized, i.e., 
$\int_{\R^d} V_{b_{\text{BP}}}(u_1,u_2,R)\, \d R=1.$
However, one could choose to work directly with the potential without normalization, i.e., consider the scaling factor 
\begin{align} \label{eq:non-rescaled_factor}
	b_{\text{NR}}(u_1,u_2):=(4\pi)^{n/2}.
\end{align}
Notice that both, the potential $V_{b_{\text{BP}}}$ and the potential without normalization $V_{b_{\text{NR}}}$, have the same level sets in space (up to scaling). The level sets in directions, however, are different and this has important consequences. 

\subsubsection{Motivation for the interaction potential}  \label{ssec:motivation_potential}
We focus on the non-scaled Gaussian potential for two reasons. Firstly, the Berne-Pechukas potential does not produce nematic alignment at first order, see below item (ii). Secondly, we want to avoid the modification in the directions introduced by the scaling. However, considering a system interacting via this potential is mathematically challenging to study, and, therefore, we consider a modified version of the non-scaled Gaussian potential, retaining the similar properties, this was explained in Remark \ref{rem:Berne_Pechukas}. Instead of \eqref{eq:non-rescaled_factor}, we work with \eqref{eq:weighted_Gaussian_potential}, as introduced earlier. This simplification resembles to the simplification done when considering the Maier-Saupe potential $b_{\text{MS}}(u,u'):=(u\cdot u')^2$ instead of the Onsager potential $b_{\text{O}}(u,u'):=|u\cdot u'|$, see, \cite{WH08}. Both potentials have the same minimizer, but $b_{\text{MS}}$ is easier to study mathematically. 

Moreover, using \eqref{eq:weighted_Gaussian_potential}, we then observe that the leading term of the expansion \eqref{eq:rescaleV} is given by
\begin{align}\label{eq:potentialW}
	W_f(t, x_1,u_1) := \ints\lp 1-\chi^2 (u_1\cdot u_2)^2\rp f(t, x_1,u_2) \d u_2. 
\end{align}
The function inside the integrand $\varphi (u_1,u_2):=\lp 1-\chi^2 (u_1\cdot u_2)^2\rp$ resembles that of the Maier-Saupe potential \cite{WH08} and motivates our choice for the rescaling factor $b$. In particular,
\begin{itemize}
	\item[(i).] if we consider $b \equiv 1$ (i.e., without scaling of the potential), we obtain
	\begin{align}
		\tilde W_f(t, x_1,u_1) := \ints \lp 1-\chi^2 (u_1\cdot u_2)^2\rp ^{1/2} f(t, x_1,u_2) \d u_2, 
	\end{align}
	with $\tilde\varphi(u_1,u_2):=\lp 1-\chi^2 (u_1\cdot u_2)^2\rp ^{1/2}$. Even though the potential with $\varphi$ and the potential with $\tilde{\varphi}$ have the same minimizers in terms of $u_1,u_2$, $W_f$ is more straightforward to analyze since we can rewrite it as
	\begin{align*}
		W_f(t,x_1,u_1)=u_1^T\lp\ints ( \mathrm{Id}- \chi^2(u_2\otimes u_2) ) f(t,x_1,u_2) \d u_2\rp u_1,
	\end{align*}
i.e.,  we can decouple the variables $u_1$ and $u_2$ and recast $W_f$ in terms of $Q_f$ (see Equations \eqref{eq:Qdef} and \eqref{def:W_f}). As previously mentioned, a similar simplification is done when, instead of considering the Onsager potential $b_{\text{O}}(u_1,u_2)=|u_1\cdot u_2|$, one considers the Maier-Saupe potential $b_{\text{MS}}(u_1,u_2)=(u_1\cdot u_2)^2$, see \cite{WH08} for more details.
\item[(ii).] if we consider the Berne-Pechukas scaling $b_{\text{BP}}$ in \eqref{eq:rescaling_BP}, then the corresponding potential takes the form
\begin{align}
	W_f^{\text{BP}}(t,x_1)= \ints f(t, x_1,u_2) \d u_2 =  \rho(t, x_1).
 \label{eq:potential_BP}
\end{align}
Since the total integral of $V_{b_{\text{BP}}}$ is one, we do not obtain a nematic alignment potential at the leading order.
\end{itemize}
In Section \ref{ssec:comparison_potentials}, we provide a numerical comparison of stochastic particle systems interacting via these different potentials. Indeed, we observe that particles interacting via the Berne-Pechukas potential do not align, see Figure \ref{fig:sim_vary_potential}.

\subsection{Numerical simulations}
\label{ssec:particle_sims}

This section is dedicated to the numerical simulations of the stochastic particle system \eqref{eq:discrete_system}, particularly to compare the particle-level dynamics at longer times with our analytical predictions on the macroscopic dynamics. 

We start with the study of how the central assumption for well-posedness \eqref{eq:intro_condition_wellposedness} at the macroscopic level affects the particle dynamics. Subsequently, we investigate the impact of the anisotropy of the particles (measured by $\chi$ defined in \eqref{eq:def-chi}) and the particle density on the particle interaction dynamics.  We finish the numerical simulations section with a comparison between the different types of interaction potentials mentioned in Section \ref{ssec:potentials}.

The source code for the particle simulations is provided at 
\begin{center}
    \url{https://github.com/cwytrzens/anisotropic_particles}.
\end{center}

\subsubsection{Numerical implementation}
We simulate the discrete particle dynamics in dimension $n=2$, governed by the stochastic differential equations \eqref{eq:discrete_system_X} and \eqref{eq:discrete_system_u} on a periodic domain. The simulations use an 
explicit Runge-Kutta Milstein method of strong order 1 with adaptive time stepping. This numerical method is part of the Julia package \texttt{StochasticDiffEq.jl}, see \cite{RN17}. Since we want to study a wide range of model parameters, it is crucial to use an adaptive time-stepping method, as the stiffness of the system strongly depends on the model parameters and differs across the parameter range that we investigate. We use default tolerance parameters \texttt{reltol} $= 10^{-2}$ and \texttt{abstol} $= 10^{-2}$. 
Moreover, in order to allow faster numerical evaluations of the interaction forces, we truncate the potential $V_{b}$ as defined in Equation \eqref{eq:potentialG} for $R = | X_j - X_i | \geq 8 \max(\ell,d)$. 

For the particle simulations, we consider a square domain of size $[0,L_x]\times[0,L_y]$ with $L_x=L_y=100$ and fix the number of particles at $N= 10^5$. The initial positions $X_i$ and directions $u_i$ of the particles are taken uniform and random for $i=1,..., N$. Moreover, we set $t_{\text{end}}=1.5 \times 10^5$, the time at which the simulations stop.

We perform an extensive parameter study for each remaining parameter pair, namely, the diffusion constants in space $D_x$ and direction $D_u$; the strength factors of the repulsive force, $\mu$ and $\lambda$; the anisotropy parameter $\chi$, and the particle density $\bar{\rho}$ which we define as  
\begin{equation}
    \bar{\rho} \coloneqq \frac{\pi \ell d N}{L_x L_y} \label{eq:def-rhobar},
\end{equation} 
and it represents the ratio of the total volume of all particles to the domain size.  The parameters $\chi$ and $\bar{\rho}$ determine $d$
 and $\ell$ uniquely (by solving \eqref{eq:def-chi} and \eqref{eq:def-rhobar} for $d$ and $\ell$).

\subsubsection{Parameter study}

For the parameter study, we focus on capturing the regime of parameter values at which the global alignment is reached versus those at which the particles fail to align globally. As the dynamics depend on many parameters and the balance between them, capturing all interesting parameter combinations is very challenging; thus, out of the scope of this numerical study. To measure alignment, we compute the order parameter $\gamma_f$ which is given in \eqref{def:deg_align}. If not stated otherwise, all the simulations use the default parameters given in Table \ref{tab:model_params}.

\begin{table}[ht!]
    \centering
    \begin{tabular}{cc}
        \toprule
         Parameter & Value \\ 
         \midrule
         $\chi$ & $0.9$ \\ 
         $\overline{\rho}$ & $1$ \\
         $D_x$ & $2^{-4}$ \\ 
         $D_u$ & $2^{-11}$ \\
         $\lambda$ & $2^8$ \\ 
         $\mu$ & $2^{13}$ \\ 
         $N$ & $10^5$ \\
         $L_x, L_y$ & $100$ \\ 
         $t_{\mathrm{end}}$ & $1.5 \times 10^5$ \\
         \bottomrule
    \end{tabular}
    \caption{Default model parameters for the simulations of the particle system.}
    \label{tab:model_params}
\end{table}

To capture the transition to alignment, we typically vary one or two parameters while keeping the others fixed. This approach allows for a direct comparison between our numerical results and the theoretical prediction that the condition $\sigma < \nu$ is necessary for well-posedness of the continuum equations, see \eqref{eq:intro_condition_wellposedness}. Recalling that $\sigma = \frac{D_u}{\lambda}$ and $\nu = \frac{D_x}{\mu}$, we study the trajectories of the order parameter $\gamma_f$ by varying $D_u$ relative to $D_x$, and $\lambda$ relative to $\mu$, while keeping the remaining parameters constant.

\medskip 

Another primary feature of our particle system is the global alignment  induced by  the anisotropy of the particles. To investigate this feature, we conduct a parameter study of $\chi$ versus $\bar \rho$. Specifically, we study the impact of the anisotropy parameter $\chi$ on global alignment by defining the parameters $\ell$ and $d$ implicitly in terms of $\chi$ and $\bar{\rho}$. 

\medskip 
For each parameter pair $(D_x, D_u)$, $(\lambda, \mu)$, and $(\chi, \bar{\rho})$, we present three plots:

\begin{enumerate}[label=(\alph*)]
    \item The expected trajectories of the mean global alignment $\gamma_f$, obtained by varying one parameter while keeping the others constant.
    \item A similar plot for the second parameter of the pair.
    \item A heatmap of the mean order parameter $\gamma_f$ at the final simulation time $t_{\text{end}}$, obtained by varying both parameters simultaneously. The heatmap highlights the transition to alignment with the transition front ($\sigma = \nu$) marked by a red line. 
\end{enumerate}

For each parameter combination, we collect $8$ simulation samples to estimate the trajectory of the mean global alignment, where the matching color bands show the standard deviation.

\subsubsection{Simulation results} \label{ssec:sim_results}

In this section, we present the results of our numerical experiments.

We start with the study of our first parameter pair, diffusion constants $D_x$ and $D_u$, presented in Figure \ref{fig:vary_DxDu}. Figure  \ref{fig:vary_Dx} displays the  trajectories of the order parameter $\gamma_f$ over time for different values of $D_x = 2^{-k}$ for $k = 2,\dots,7$ and $D_u$ fixed at $0$. Hence, there is no noise present in the particle directions $u$. We observe that the smaller the value of $D_x$, the slower the emergence of the global alignment. We fix $D_x=2^{-4}$ as the default value and look at various $D_u$ values in Figure \ref{fig:vary_Du}. 

\begin{figure}[ht!]
     \centering
     \begin{subfigure}[b]{0.32\textwidth}
         \centering
         \includegraphics[width=1.1\linewidth]
         {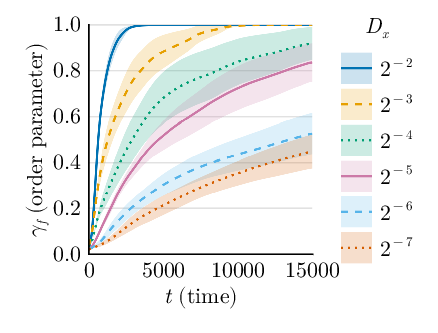}
         \caption{$D_u=0$ and $D_x$ varying}
         \label{fig:vary_Dx}
     \end{subfigure}
     \hfill
     \begin{subfigure}[b]{0.32\textwidth}
         \centering
         \includegraphics[width=1.1\linewidth]
         {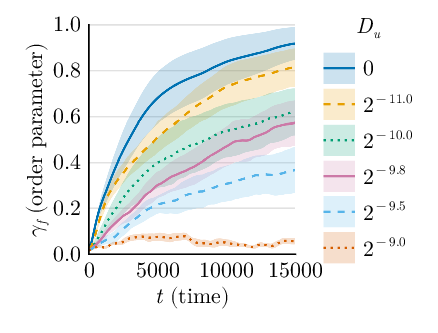}
         \caption{$D_x=2^{-4}$ and $D_u$ varying}
         \label{fig:vary_Du}
     \end{subfigure}
     \hfill
     \begin{subfigure}[b]{0.32\textwidth}
         \centering
         \includegraphics[width=1.1\linewidth]
         {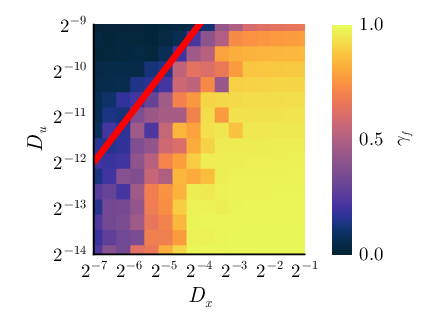}
         \caption{Heatmap of $\gamma_f$: $D_x$ vs. $D_u$.}
         \label{fig:heatmap_DxDu}
     \end{subfigure}
        \caption{
        Parameter study for $(D_x, D_u)$. Trajectories of the order parameter $\gamma_f$ are plotted for fixed $D_u=0$ and varying $D_x$ in Figure \ref{fig:vary_Dx}; for fixed $D_x=2^{-4}$ and varying $D_u$ in Figure \ref{fig:vary_Du}. Figure \ref{fig:heatmap_DxDu} displays a heatmap of $\gamma_f$ at time $t_{\text{end}}=1.5 \times 10^5$ while both $D_u$ and $D_x$ vary simultaneously. The red line depicts where $\nu=\sigma$.}
        \label{fig:vary_DxDu}
\end{figure}
We observe an opposite effect for $D_u$. In Figure \ref{fig:vary_Du}, varying $D_u$ shows that less angular noise leads to quicker alignment and too large noise might prevent reaching alignment all-together.

Finally, in Figure \ref{fig:heatmap_DxDu}, we display the order parameter $\gamma_f$ at the final simulation time $t_{\text{end}}$ as a heatmap of varying $D_x$ versus $D_u$. Here, we observe the parameter range leading to alignment and a transition along level sets of $D_x$ versus $D_u$. At the particle level, the heatmap can be interpreted as follows. Increasing the positional noise facilitates interactions among a greater variety of particles, as neighboring particles change quickly, which contributes to reaching global alignment. On the contrary, increasing the directional noise drives the system away from global alignment, which is, in particular, critical if particles only interact with their nearest neighbors. Consequently, we observe a lack of alignment in the top left corner of the heatmap, where positional noise is low and directional noise is high.

Note that the red line displays where $\nu=\frac{D_x}{\mu} = \frac{D_u}{\lambda} = \sigma$. This indicates the interface at which the macroscopic equation \eqref{eq:macro_Omega} for the mean-nematic direction $\Omega_f$ becomes ill-posed, see the discussion on well-posedness in Section \ref{ssec:well_posedness}. Crucially, the right-hand side of this red line is the parameter regime ($\sigma < \nu$) where the macroscopic equations remain well-posed, and it is exactly in this parameter regime that we observe the onset of particle alignment.

In Figure \ref{fig:vary_lambda_mu}, we investigate how the forces acting on positions $X_i$ and directions $u_i$ affect the alignment of particles. The strength factors of these forces are determined by the positive constants $\mu$ and $\lambda$ in front of the interaction terms in Equations \eqref{eq:discrete_system_X} and \eqref{eq:discrete_system_u}. 

\begin{figure}[ht!]
     \centering
     \begin{subfigure}[b]{0.32\textwidth}
         \centering
         \includegraphics[width=1.1\linewidth]{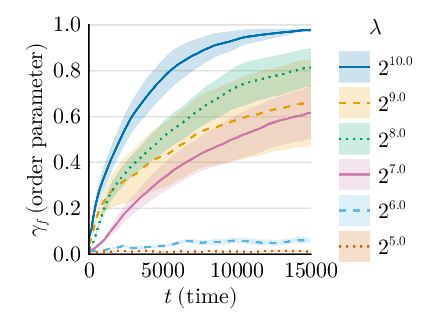}
         \caption{$\mu = 2^{13}$ and $\lambda$ varying.}
         \label{fig:vary_lambda}
     \end{subfigure}
     \hfill
     \begin{subfigure}[b]{0.32\textwidth}
         \centering
         \includegraphics[width=1.1\linewidth]{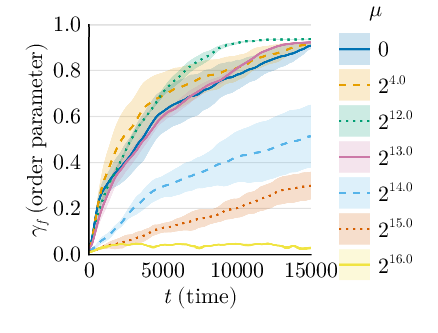}
         \caption{$\lambda = 2^{8}$ and $\mu$ varying.}
         \label{fig:vary_mu}
     \end{subfigure}
     \hfill
     \begin{subfigure}[b]{0.32\textwidth}
         \centering
         \includegraphics[width=1.1\linewidth]{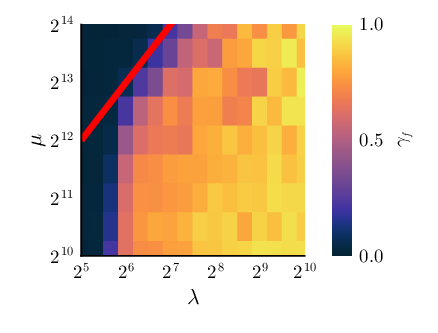}
         \caption{Heatmap of $\gamma_f$: $\lambda$ vs. $\mu$.}
         \label{fig:heatmap_lambda_mu}
     \end{subfigure}
        \caption{Parameter study for $(\lambda, \mu)$. 
        Trajectories of the order parameter $\gamma_f$ are plotted for fixed $\mu=2^{13}$ and varying $\lambda$ in Figure \ref{fig:vary_lambda}; for $\lambda=2^8$ and varying $\mu$ in Figure \ref{fig:vary_chi}. Figure \ref{fig:heatmap_lambda_mu} displays the heatmap of $\gamma_f$ at time $t_{\text{end}}=1.5 \times 10^5$ while both $\lambda$ and $\mu$ vary simultaneously. The red line depicts where $\nu=\sigma$.}
        \label{fig:vary_lambda_mu}
\end{figure}

Figure \ref{fig:vary_lambda} shows the trajectories of the order parameter $\gamma_f$ over time for fixed $\mu = 2^{13}$ and varying values of $\lambda$ from $2^5$ to $2^{10}$. We observe that for $\lambda\leq 2^{6}$, the system is in total chaos, with $\gamma_f$ close to $0$, indicating that the force acting on the particles' directions is not strong enough to drive the system towards alignment. Additionally, we deduce that as $\lambda$ increases, $\gamma_f$ appears to increase accordingly.

Next, we fix $\lambda=2^8$ and vary $\mu$ from $0$ to $2^{16}$ to study how the strength of the potential $V_b$ affects the alignment through the force acting on the particles' positions. In Figure \ref{fig:vary_mu}, we observe that $\mu$ must be sufficiently large to have a significant impact on the particle dynamics. As $\mu$ increases, the particles lose their alignment, see, e.g., when $\mu= 2^{16}$, $\gamma_f$ is very close to $0$. 

Lastly, we present a heatmap of the order parameter $\gamma_f$ in Figure \ref{fig:heatmap_lambda_mu}, similar to Figure \ref{fig:heatmap_DxDu}, but for $\lambda$ versus $\mu$. Similarly, the red line displays the interface of the analytical well-posedness condition $\nu = \sigma$, see Equation \eqref{eq:intro_condition_wellposedness}. Again, we observe that global alignment only occurs inside the parameter range where the macroscopic equations are well-posed ($\sigma<\nu$), located on the right-hand side of the red line.

As for our last parameter pair, in Figure \ref{fig:vary_chi_rhobar_trajectories} we study the effects of the anisotropy $\chi$ and the density of particles $\bar \rho$ on the global alignment. In Figure \ref{fig:vary_chi}, we present a plot of the trajectories of $\gamma_f$ when $\bar \rho = 1$ and the anisotropy parameter $\chi$ is varying from $0.6$ to $1$. We can clearly deduce that a certain degree of anisotropy is required for particles to reach global alignment. 

\begin{figure}[ht!]
	\centering
	\begin{subfigure}[b]{0.32\textwidth}
		\centering
		\includegraphics[width=1.1\linewidth]{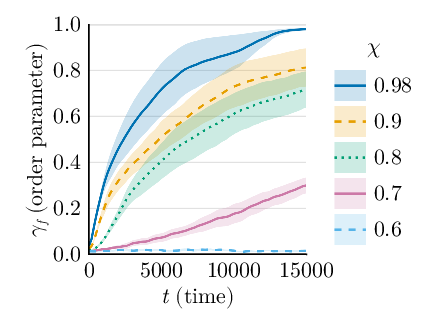}
		\caption{$\bar \rho =1$ and $\chi$ varying.}
		\label{fig:vary_chi}
	\end{subfigure}
	\hfill
	\begin{subfigure}[b]{0.32\textwidth}
		\centering
		\includegraphics[width=1.1\linewidth]{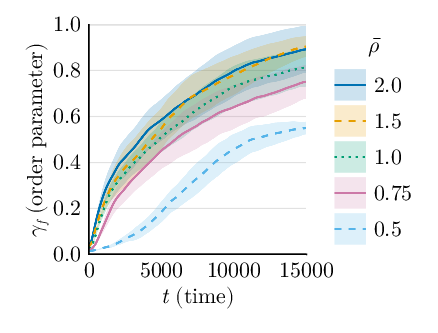}
		\caption{$\chi=0.9$ and $\bar{\rho}$ varying.}
		\label{fig:vary_rhobar}
	\end{subfigure}
	\hfill
	\begin{subfigure}[b]{0.32\textwidth}
		\centering
		\includegraphics[width=1.1\linewidth]{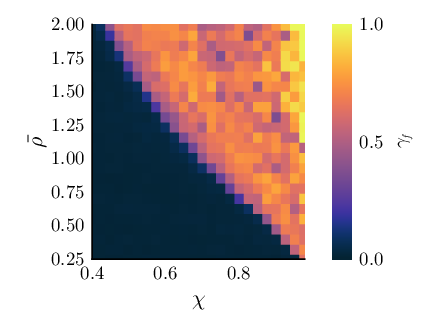}
		\caption{Heatmap of $\gamma_f$: $\chi$ vs. $\bar{\rho}$.}
		\label{fig:heatmap_chi_rhobar_1}
	\end{subfigure}
	\caption{Parameter study for $(\chi, \bar \rho)$. Trajectories of the order parameter $\gamma_f$ are plotted for fixed $\bar \rho = 1 $ and varying $\chi$ in Figure \ref{fig:vary_chi}; for fixed $\chi = 0.9$ and varying $\bar \rho$ in Figure \ref{fig:vary_rhobar}. Figure \ref{fig:vary_chi_rhobar_trajectories} displays the heatmap of $\gamma_f$ at time $t_{\text{end}}=1.5 \times 10^5$ while both $\chi$ and $\bar \rho$ vary simultaneously.}
	\label{fig:vary_chi_rhobar_trajectories}
\end{figure}

In Figure \ref{fig:vary_rhobar}, we fix $\chi=0.9$ and vary $\bar \rho$ from $0.5$ to $2$ (recall that $\bar \rho$ is computed using \eqref{eq:def-rhobar}) and plot the trajectories of $\gamma_f$. We observe that the particle density must also be sufficiently high for particles to align, otherwise, if the particles are too far apart, the repulsive potential is not strong enough to induce alignment. Figure \ref{fig:heatmap_chi_rhobar_1} presents a heatmap illustrating the effect of both $\chi$ and $\bar \rho$ on the transition towards a higher-order alignment. Here, we also observe a clear linear progression, moving from a chaotic state, where $\gamma_f$ is close to $0$, to a well-ordered system, characterized by global alignment with $\gamma_f\approx 1$.

\begin{figure}[ht!] 
	\centering
		\begin{tabular}[c]{m{8cm} m{7cm}}
			\includegraphics[width=\linewidth]{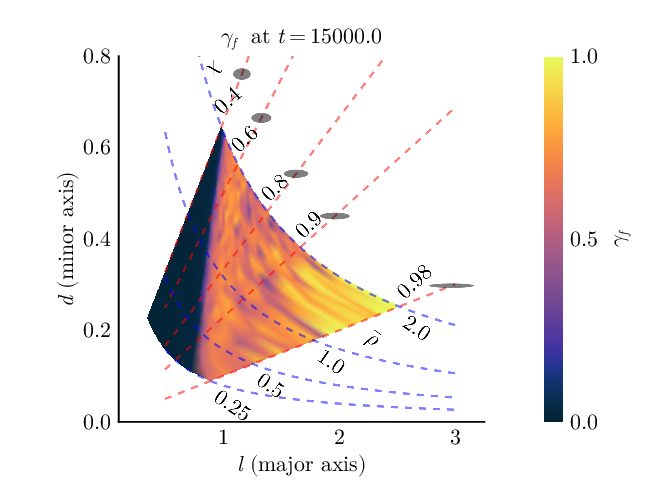} &
			\caption{Heatmap of $\gamma_f$: $\chi$ versus $\bar \rho$. The dashed red lines display the anisotropy parameter $\chi$ for the corresponding lenghts of the major and minor axes of the ellipses, $\ell$ and $d$ respectively. Ellipses with corresponding $\chi$ are pictured in gray. The dashed blue lines are the constant density curves $\bar \rho$ corresponding to $\ell$ and $d$, computed using \eqref{eq:def-rhobar}.}   	\label{fig:vary_l_d_2}
		\end{tabular}
\end{figure}

Moreover, to further investigate the effect of the shape of the particles on the global alignment of the system, we present in Figure \ref{fig:vary_l_d_2} another heatmap of $\gamma_f$ with the same dataset as in Figure \ref{fig:heatmap_chi_rhobar_1}. However, in Figure \ref{fig:vary_l_d_2} we compare the effect of varying the lengths of the major and the minor axes of the ellipses, i.e., $\ell$ and $d$, on global alignment rather than with respect to $\chi$ and $\bar{\rho}$. Notice that depending on $\ell$ and $d$, both $\chi$ and $\bar \rho$ vary across the heatmap accordingly. The red dashed lines display the constant $\chi$ curves, and similarly the blue dashed lines represent the constant $\bar \rho$ curves. This figure also confirms that the larger the particle anisotropy, the easier global alignment is achieved.

We remark that these numerical findings are in total agreement with our analytical conclusions. More precisely, at the macroscopic level, this behavior is also evident since the repulsive potential $V_b$ ultimately gives rise to the nematic alignment potential with the factor $ 1-\chi^2(u \cdot u_2)^2 $ in Equation \eqref{eq:W_f}. Consequently, when particles are circular ($\chi =0$), no alignment is observed at the macroscopic level. Also the effect of the potential is less for smaller values of $\chi$. 

We end this section with several snapshots of a particle simulation. In Figure \ref{fig:snapshots}, we start with a completely random state on the upper left figure and run the simulation using the default parameters in Table \ref{tab:model_params}. Above each snapshot, we display the time at which the snapshot was captured with together with the value of the order parameter $\gamma_f$ of the system at that time. The colors represent the nematic direction of the particles. 

\begin{figure}[ht!]
	\centering
	\includegraphics[width=0.32\linewidth]{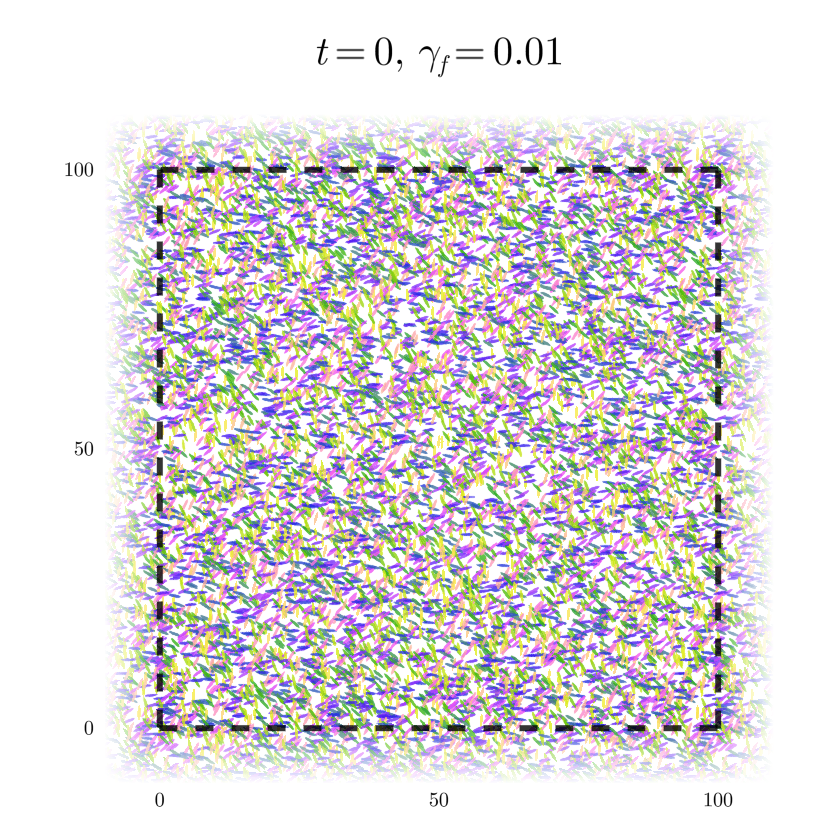}
	\includegraphics[width=0.32\linewidth]{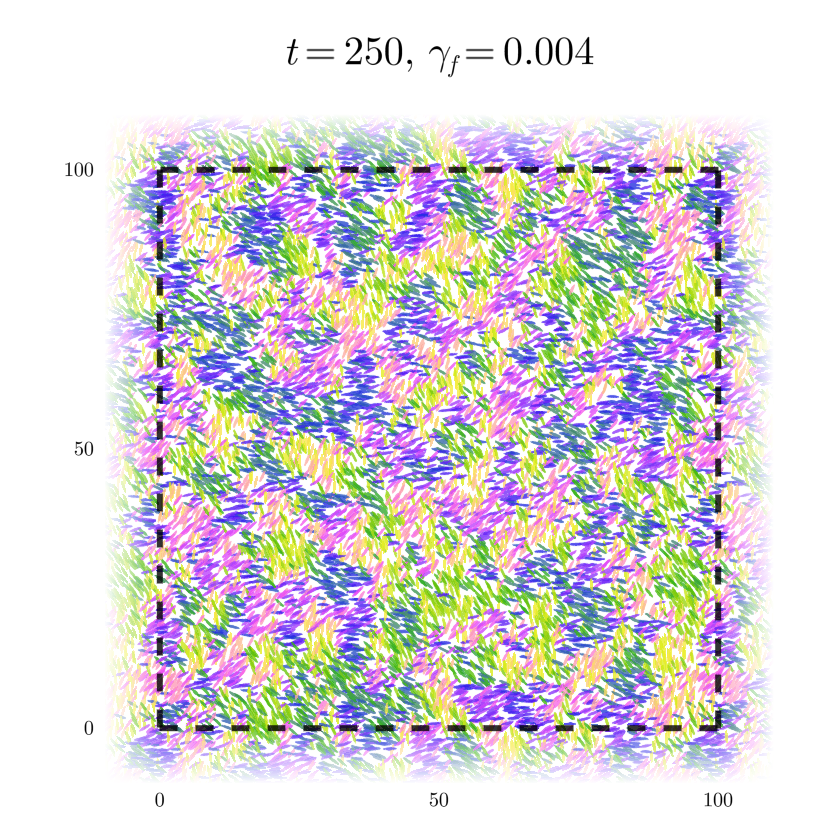}
	\includegraphics[width=0.32\linewidth]{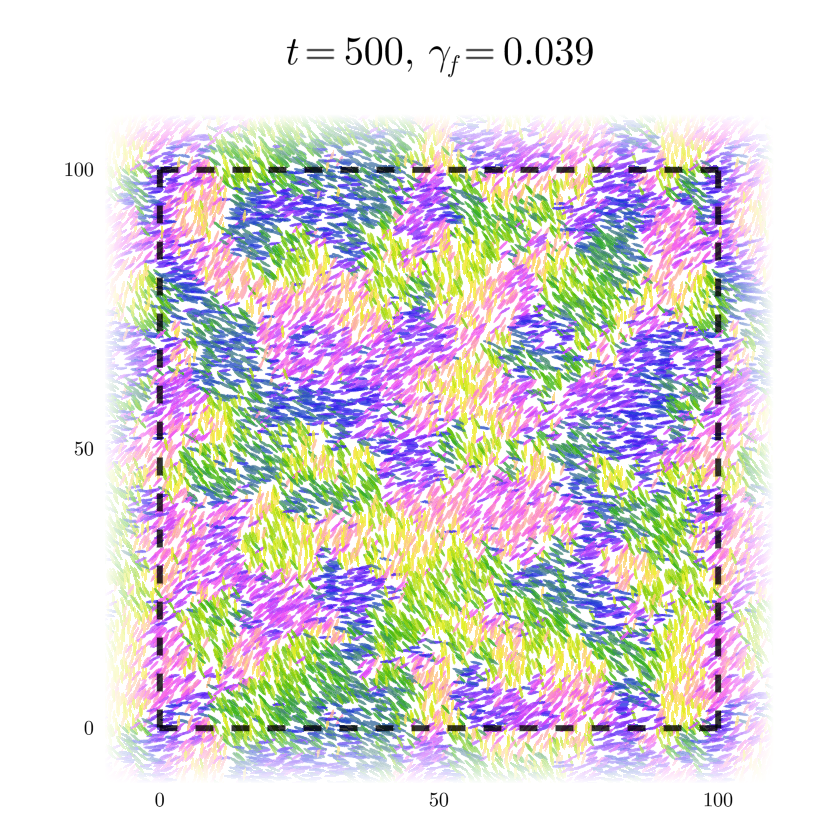}
	\\
	\includegraphics[width=0.32\linewidth]{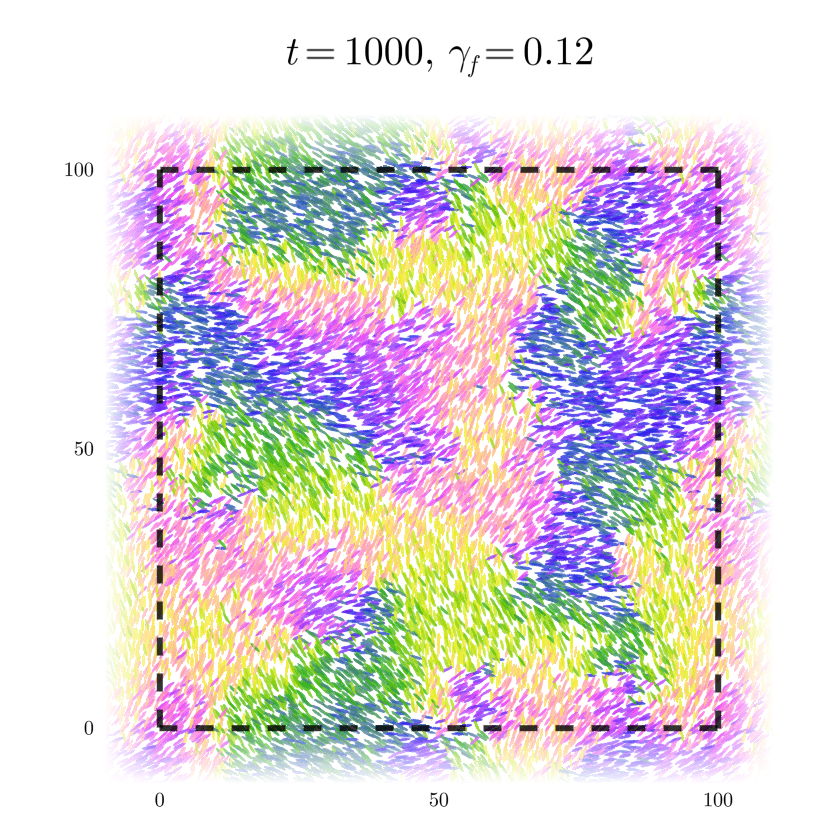}
	\includegraphics[width=0.32\linewidth]{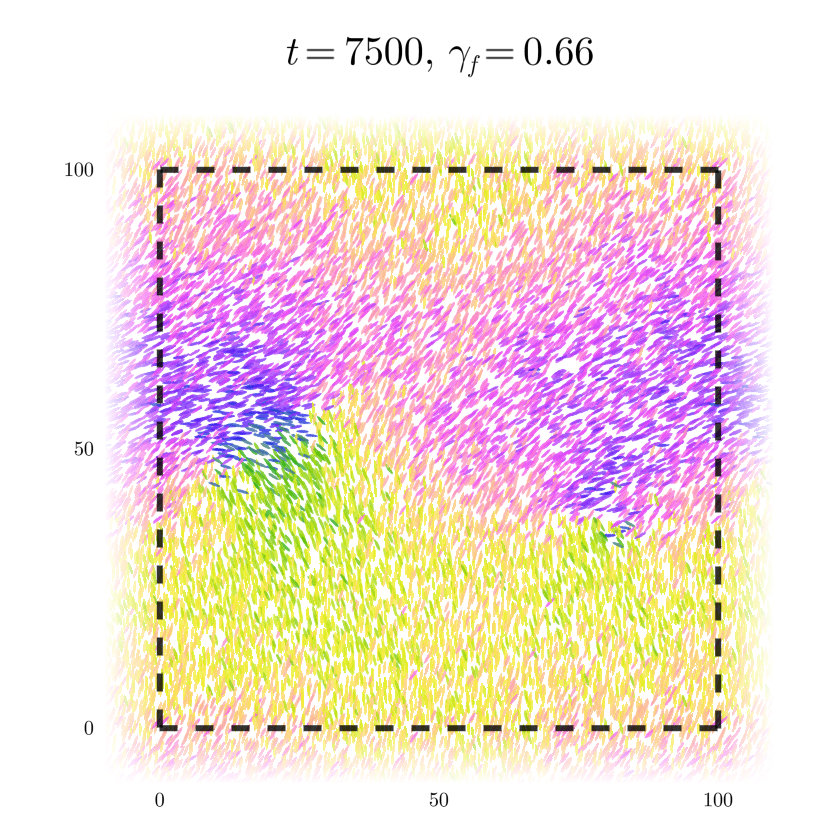}
	\includegraphics[width=0.32\linewidth]{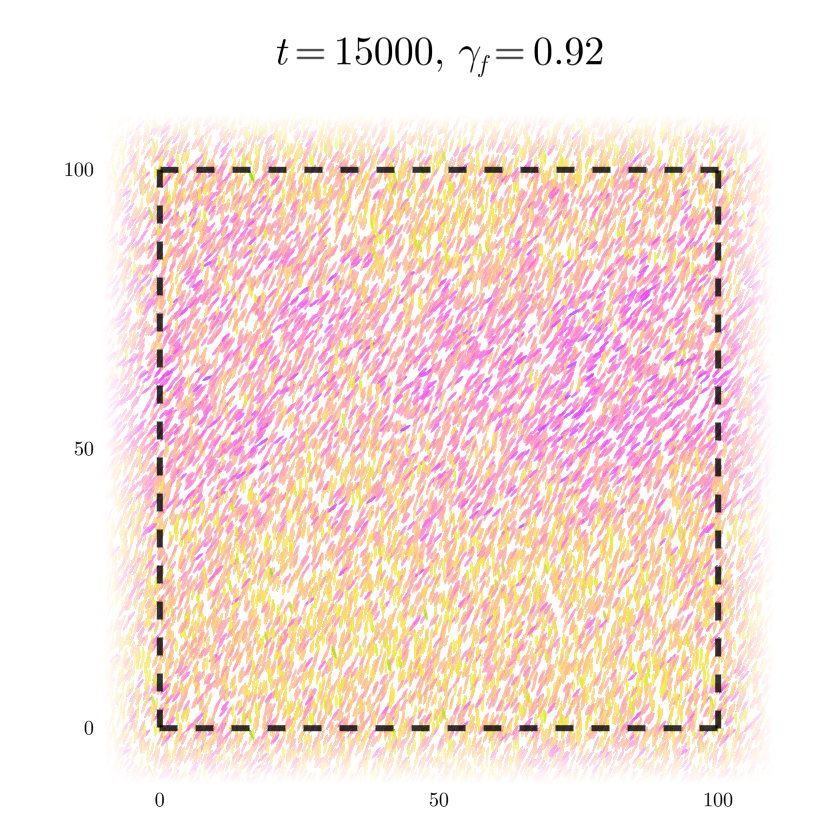}
	 \caption{Snapshots of a particle simulation at different times using the default parameters given in Table \ref{tab:model_params}. Different colors represent the different nematic directions of the particles.
 For the corresponding movie and movies for other cases see link
\url{https://doi.org/10.6084/m9.figshare.27117136.v1}.}
	\label{fig:snapshots}
\end{figure}

The particles start to align over time and hence, the also the order parameter $\gamma_f$ increases in time. Finally, the bottom right snapshot in Figure \ref{fig:snapshots} shows that the system reaches a globally aligned state with $\gamma_f=0.92$, displayed in uniform colors, at the final simulation time $t_{\text{end}}$.

\subsubsection{Numerical comparison of the potentials}
\label{ssec:comparison_potentials}

Another prediction we draw from our mathematical analysis in Section \ref{ssec:potentials} is the lack of alignment in the case of the Berne-Pechukas potential \eqref{eq:potential_BP}. To test this hypothesis numerically, we consider an interpolation between the weighted Gaussian potential and the Berne-Pechukas potential, 
\begin{align}
    V_\xi(u_1,u_2,R) \coloneqq 
    (4\pi)^{-n/2} \det(\Sigma)^{\frac{1}{2} - \xi} \exp(-R^T \Sigma^{-1} R), \label{eq:potential_V_xi}
\end{align} where $\xi \in [0,1]$ and recalling that $\det(\Sigma) = 1 - \chi^2 (u_1 \cdot u_2)^2$. Notice that $V_\xi$ corresponds to 
\begin{itemize}
    \item the weighted Gaussian potential, given by $V_{b_\text{WG}}$ \eqref{eq:weighted_Gaussian_potential}, when $\xi = 0$,
    \item  the non-scaled Gaussian potential $V_{b_\text{NR}}$, obtained by using the scaling factor \eqref{eq:non-rescaled_factor}, when $\xi = \frac{1}{2}$,
    \item the Berne-Pechuckas potential $V_{b_\text{BP}}$, obtained by using the scaling factor \eqref{eq:potential_BP}, when $\xi = 1$.
\end{itemize}

We remark that with increasing $\xi$, the scale of the potential might change, since we have
\begin{align*}
    \frac{V_{\xi}}{V_{b_{\text{WG}}}} = \frac{(1 - \chi^2 (u_1 \cdot u_2)^2)^{\frac{1}{2}-\xi}}{(1 - \chi^2 (u_1 \cdot u_2)^2)^{\frac{1}{2}}} \leq \frac{1}{(1 - \chi^2)^{\xi}}.
\end{align*}

In Section \ref{ssec:sim_results}, we observed (see Figure \ref{fig:vary_lambda_mu}) that increasing $\lambda$ increases the order parameter $\gamma_f$; thus, has a positive effect on the global alignment. Therefore, we consider the scaling $\lambda = ((1-\chi^2)^{2\xi})\lambda'$ so that increasing $\xi$ increases $\lambda'$ and speeds up the alignment. Notice that here we chose the exponent $2\xi$ instead of just $\xi$ to promote even stronger alignment for larger $\xi$ values. For the simulations we use the default parameters listed in Table \ref{tab:model_params}.

\begin{figure}[ht!]
	\centering
    \begin{tabular}[c]{m{8cm} m{7cm}}
	\includegraphics[width=.9\linewidth]{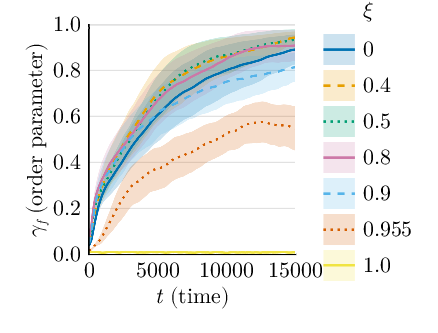}
	& \caption{Trajectories of the order parameter $\gamma_f$ are plotted for varying $\xi$ and the remaining parameters are fixed at their default values given in Table \ref{tab:model_params}. For $\xi=0$, $\xi=\nicefrac{1}{2}$ and $\xi=1$, $V_\xi$ corresponds to the weighted Gaussian potential $V_{b_\text{WG}}$, the non-scaled Gaussian potential $V_{b_\text{NR}}$ and the Berne-Pechuckas potential $V_{b_\text{BP}}$, respectively.}
	\label{fig:sim_vary_potential}
    \end{tabular}
\end{figure}

Figure \ref{fig:sim_vary_potential} displays the trajectories of the order parameter $\gamma_f$ for different values of $\xi$. We clearly observe a sharp transition from higher ordered states to a disordered state (where $\gamma_f$ is nearly $0$) as we increase $\xi=0$ (the weighted Gaussian potential) to $\xi=1$ (the Berne-Pechukas potential). Moreover, we also obtain global alignment for $\xi=\frac{1}{2}$ (the non-scaled Gaussian potential). Thus, our numerical study of the potentials strongly supports our analytical predictions in Section \ref{ssec:potentials}.

\paragraph{Summary of the numerical studies.}

In this section, we conducted comprehensive numerical parameter studies on various parameter pairs: $(D_x, D_u)$, $(\lambda, \mu)$, and $(\chi, \bar{\rho})$. We also compared different interaction potentials by simulating the stochastic particle system \eqref{eq:discrete_system}. Given the complex dynamics governed by \eqref{eq:discrete_system}, it is numerically very challenging to determine all the interesting parameter regimes. Hence, we focused on the transitional ranges where global alignment breaks down. While the observed trends can be intuitively explained at the particle level, they are closely linked to our analytical results in the following ways:
\begin{itemize}
    \item Global alignment fails to emerge for parameters where the macroscopic equation for the mean-nematic alignment $\Omega$ is ill-posed, as characterized by the well-posedness condition \eqref{eq:intro_condition_wellposedness}. This effect is evident in Figures~\ref{fig:heatmap_DxDu} and~\ref{fig:heatmap_lambda_mu}.
    \item Particle shape and density significantly influence the phase transition towards global alignment. As predicted by the macroscopic equations, spherical particles and low densities result in slower rates of alignment or a complete lack thereof.
    \item Our numerical tests confirm that the Berne-Pechukas potential does not lead to global alignment at the particle level, which aligns with the macroscopic perspective presented in Section~\ref{ssec:potentials}.
\end{itemize}

\section{Proof of Theorem \ref{th:macroscopic limit}}
\label{sec:proof}
This section is dedicated to the proof of our main result, Theorem \ref{th:macroscopic limit}. First we give some preliminary tools that will be used later in the proof. 

\subsection{Preliminaries}
\paragraph{Change of variables.} In the sequel, we apply repeatedly the following change of variables $u\mapsto (\theta,v)$ where $u \in \S^{n-1}\setminus\left\{\pm\Omega\right\}$ and $(\theta,v) \in (0,\pi)\times \S^{n-2}$ defined by
\begin{align} \label{eq:change_of_variables}
	u & = (u\cdot\Omega)\Omega +P_{\Omega^\perp}(u)=\cos\theta \Omega + \sin\theta v,\\
	\d u & = \sin ^{n-2} \theta \d \theta \d v,
\end{align} where $\S^{n-2}$ is identified as $\S^{n-1}\cap \Omega^\perp$. For simplicity, we assume that the measure $\d u$ is such that the total mass of the sphere is $1$, i.e., $\int_{\mathbb{S}^{n-1}}\d u=1$. In the course of the proof, we will also apply this change of variables to the Gibbs distribution $\GetaA$ as stated in \eqref{def:GetaA}. Hence, we define
\begin{align} \label{lem:var_trafo_G_h}
	\Getatheta := \frac{e^{\eta(\rho)\cos^2 \theta}}{Z_\eta}, \quad \text{where} \quad  Z_\eta = \int_0^\pi e^{\eta(\rho)\cos^2 \theta'} \d \theta'. 
\end{align} as the transformed Gibbs distribution.
Indeed, notice that the value of the normalizing factor $Z_\eta$ does not depend on $\O$, but it depends on $\eta=\eta(\rho)$. For more details on this change of variables see \cite{DM-A20}.

\paragraph{Derivatives and integrals.} Since we will use various derivatives of $\GetaA$, we collect them below in the following Lemma:

\begin{lem} \label{lem:derivative_GetA}
	Let $\GetaA$ be the Gibbs distribution defined in \eqref{def:GetaA}. 
	Then the following holds, 
	\begin{align}
	\frac{\partial \GetaA}{\GetaA} &=  \eta^\prime(\rho) (\partial \rho)  \lp (u \cdot \Omega)^2 - d_{2,0}(\rho)\rp +2 \eta(\rho) (u\cdot \O) (u\cdot \partial\O), \\ 
	\frac{\nabla_x \GetaA}{\GetaA} &= \eta^\prime(\rho) (\nabla_x \rho) \,  \lp (u \cdot \Omega)^2 - d_{2,0}(\rho)\rp  +2 \eta(\rho) (u\cdot \O) (u\cdot \nabla_x \Omega),   \label{eq:grad_Geta}\\
	\frac{\partial^2\GetaA}{\GetaA} &= \partial^2 \eta \lp (u\cdot\Omega)^2-d_{2,0}(\rho) \rp +\partial \eta \lp 4(u\cdot\Omega)(u\cdot\partial\O)-\partial d_{2,0}(\rho) \rp\\
	&+2\eta (u\cdot\partial\O)^2 +2\eta (u\cdot\Omega) (u\cdot\partial^2\O) + \left (\partial \eta ((u\cdot\O)^2-d_{2,0}(\rho))+2\eta (u\cdot \Omega) (u\cdot \partial\O) \right )^2, 
	\end{align} 
 where $\partial$ denotes the partial derivative with respect to time or with respect to one of the spatial components, i.e.,  $\p=\p_t$, or $\p=\p_{x_i},\, i=1,\hdots, n$; $\nabla_x \Omega$ is a matrix such that $(\nabla_x \Omega)_{ij}=\partial_{x_j} \Omega_i$ and thus $(u\cdot \nabla_x \Omega)_i = u \cdot \partial_{x_i} \Omega$ and $(\partial^2\Omega)_{i}=\partial^2 \Omega_i$.
\end{lem}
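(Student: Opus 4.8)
The plan is to obtain all three identities by \emph{logarithmic differentiation}, exploiting that $u\in\S^{n-1}$ is an independent phase-space variable and is therefore held fixed under the operators $\partial\in\{\partial_t,\partial_{x_1},\dots,\partial_{x_n}\}$, so that the only $(t,x)$-dependence of $\GetaA$ enters through $\eta=\eta(\rho(t,x))$ and $\O=\O(t,x)$. Writing $L:=\log\GetaA = \eta(\rho)(u\cdot\O)^2 - \log Z_\eta$ with $Z_\eta=\ints e^{\eta(u\cdot\O)^2}\d u$ as in \eqref{def:GetaA}, every assertion reduces to computing $\partial L$ and $\partial^2 L$, since $\partial\GetaA/\GetaA=\partial L$.

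First I would treat the first-order identity. Differentiating $\eta(u\cdot\O)^2$ by the chain and product rules gives $(\partial\eta)(u\cdot\O)^2+2\eta(u\cdot\O)(u\cdot\partial\O)$, where $\partial\eta=\eta'(\rho)\,\partial\rho$. The only delicate term is $\partial\log Z_\eta$. Here I would use that, by rotational invariance of the measure on $\S^{n-1}$, the normalization $Z_\eta$ is independent of $\O$ and depends on $(t,x)$ only through $\eta$; differentiating under the integral sign then yields $\partial_\eta\log Z_\eta = \frac{1}{Z_\eta}\ints (u\cdot\O)^2 e^{\eta(u\cdot\O)^2}\d u = \ints (u\cdot\O)^2\GetaA\,\d u = d_{2,0}(\rho)$, the last equality being the change of variables \eqref{eq:change_of_variables} together with the definition \eqref{eq:def_dkl}. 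Hence $\partial\log Z_\eta=\eta'(\rho)(\partial\rho)\,d_{2,0}(\rho)$, and collecting terms produces exactly $\frac{\partial\GetaA}{\GetaA}=\eta'(\rho)(\partial\rho)\big((u\cdot\O)^2-d_{2,0}(\rho)\big)+2\eta(u\cdot\O)(u\cdot\partial\O)$. The gradient identity \eqref{eq:grad_Geta} is then the same computation carried out for $\partial=\partial_{x_i}$ and assembled into a vector, using $(u\cdot\nabla_x\O)_i=u\cdot\partial_{x_i}\O$.

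For the second-order identity I would use $\partial^2\GetaA/\GetaA=\partial^2 L+(\partial L)^2$, which follows from $\partial^2\GetaA=(\partial^2 L)\GetaA+(\partial L)(\partial\GetaA)$ and $\partial\GetaA=(\partial L)\GetaA$. Differentiating the expression for $\partial L$ obtained above one more time---again by the chain and product rules, and keeping $\partial d_{2,0}(\rho)$ unexpanded---gives $\partial^2 L = (\partial^2\eta)\big((u\cdot\O)^2-d_{2,0}\big)+(\partial\eta)\big(4(u\cdot\O)(u\cdot\partial\O)-\partial d_{2,0}\big)+2\eta(u\cdot\partial\O)^2+2\eta(u\cdot\O)(u\cdot\partial^2\O)$, where the factor $4$ arises from combining the contribution of $(\partial\eta)$ acting on $(u\cdot\O)^2$ with the contribution of differentiating $2\eta(u\cdot\O)(u\cdot\partial\O)$ in $\eta$. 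Adding the square $(\partial L)^2=\big((\partial\eta)((u\cdot\O)^2-d_{2,0})+2\eta(u\cdot\O)(u\cdot\partial\O)\big)^2$ reproduces the stated formula.

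The only genuine obstacle is the normalization term: one must recognize that $Z_\eta$ is $\O$-independent, so no spatial derivatives of $\O$ enter through it, and correctly identify $\partial_\eta\log Z_\eta$ with the second moment $d_{2,0}(\rho)$. The remainder is careful bookkeeping of the nested chain rule, in which $\eta$ is differentiated through $\rho$ while $\O$ is treated as a $(t,x)$-dependent unit-vector field with $u$ held fixed.
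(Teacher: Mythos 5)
Your proposal is correct: the logarithmic-differentiation computation, the identification $\partial_\eta \log Z_\eta = \ints (u\cdot\O)^2\, \GetaA \,\d u = d_{2,0}(\rho)$ (valid since $Z_\eta$ is $\O$-independent by rotational invariance, as the paper itself notes after \eqref{lem:var_trafo_G_h}), and the identity $\partial^2 \GetaA/\GetaA = \partial^2 L + (\partial L)^2$ reproduce all three formulas exactly. This is precisely the ``straightforward computation'' the paper invokes without detail, so your argument matches the paper's approach, merely written out in full.
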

\begin{proof}
	The proof follows directly from straightforward computations. 
\end{proof}

\begin{lem} \label{lem:model_integrals}
	Let $h_\eta$ be any odd function, $w \in \R^n$, and $k \in \mathbb{N} \cup 0$. Then the following holds for $\GCI := \heo\,  \proopp u$: 
\begin{align}
	 & \ints (u\cdot \Omega)^k \, \GetaA \GCI\, \d u =0,   \label{eq:intG}\\
	& \ints (u\cdot w)(u\cdot \Omega)^k \, \GetaA \GCI \, \d u =
	\frac{c_{k,2}}{n-1}P_{\Omega^\perp} w ,  \label{eq:intGw}\\
	& \ints (u\cdot \Omega)^k \, \partial\GetaA \GCI \, \d u =
	\eta(\rho)\frac{2c_{k+1,2}}{n-1}\partial \O, \label{eq:intGpartial}\\
	& \ints (u\cdot w) (u\cdot \Omega)^k \, \partial\GetaA \GCI \, \d u =
	\eta'(\rho) (\partial \rho) (c_{k+2,2}-d_{2,0} c_{k,2})\frac{1}{n-1} w \quad \mbox{ for }w\perp \Omega, \label{eq:intGw_partial}\\
	& \ints (u\cdot\Omega)^{2k} (u\cdot w)^2  \, \GetaA \GCI \, \d u =0 \quad \mbox{ for }w\perp \Omega,  \label{eq:Gw2_squared}\\
	&\ints \partial^2 \GetaA \GCI \d u = \frac{4}{n-1} \lp c_{1,2}+\eta(\rho) (c_{3,2}-d_{2,0}c_{1,2})\rp \partial\eta\partial\Omega + \frac{2c_{1,2}}{n-1}\eta(\rho) \proopp\partial^2 \O, \label{eq:intLaplacian}
\end{align} where the partial derivatives are with respect to time or space, i.e., $\partial=\partial_t, \partial_{x_i}$ with $x_i$ being the $i$-th component of $x$ and the coefficients $c_{k,p}$, $d_{k,p}$ are given in \eqref{eq:def_ckl} and \eqref{eq:def_dkl}. Moreover, it also holds that
\begin{align*}
	d_{k,p}=0 \quad \mbox{ for $k$ odd}\qquad \mbox{and} \qquad c_{k,p}=0 \quad \mbox{ for $k$ even}.
\end{align*}
\end{lem}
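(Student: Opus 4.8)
The plan is to reduce every identity to the spherical change of variables $u = \cos\theta\,\Omega + \sin\theta\, v$ from \eqref{eq:change_of_variables} and then exploit parity in the $\Omega^\perp$-variable $v \in \mathbb{S}^{n-2}$. Under this substitution one has $u\cdot\Omega = \cos\theta$, $\proopp u = \sin\theta\, v$, and $\GetaA$ becomes $g_{\eta(\rho)}(\theta)$, independent of $v$; the invariant reads $\GCI = h_\eta(\cos\theta)\sin\theta\, v$. For a general vector $w$ I would split $w = (w\cdot\Omega)\Omega + \proopp w$, so that $u\cdot w = \cos\theta\,(w\cdot\Omega) + \sin\theta\,(v\cdot \proopp w)$. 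Two facts drive everything: (i) any integrand carrying an odd total power of $v$ vanishes after integrating $\d v$ over $\mathbb{S}^{n-2}$; and (ii) the isotropy identity $\int_{\mathbb{S}^{n-2}} v\otimes v\,\d v = \frac{1}{n-1}\big(\int_{\mathbb{S}^{n-2}}\d v\big)\,\proopp$, which supplies the factor $\tfrac{1}{n-1}$ and projects onto $\Omega^\perp$. After the $v$-integration, the remaining $\theta$-integrals are recognized as the coefficients $c_{k,p}$ and $d_{k,p}$ of \eqref{eq:def_ckl}--\eqref{eq:def_dkl}, with the exponent $p=2$ arising from the two extra $\sin\theta$ factors coming from $\proopp u$ and from the $\proopp w$ part of $u\cdot w$ (or the second $v$-factor).

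Granting this, \eqref{eq:intG} is immediate since its integrand is linear in $v$ (odd), and \eqref{eq:Gw2_squared} vanishes because its integrand is cubic in $v$; \eqref{eq:intGw} follows from the single surviving term $\int(v\cdot \proopp w)\,v\,\d v$ together with identity (ii), producing $\frac{c_{k,2}}{n-1}\proopp w$. For the derivative identities I would substitute the expressions for $\partial\GetaA$ and $\partial^2\GetaA$ from Lemma \ref{lem:derivative_GetA}, so each integral becomes a finite linear combination of the base integrals \eqref{eq:intG}, \eqref{eq:intGw}, \eqref{eq:Gw2_squared}. For \eqref{eq:intGpartial}, the $\eta'(\rho)\,\partial\rho\,\big((u\cdot\Omega)^2-d_{2,0}\big)$ part is killed by \eqref{eq:intG}, while the $2\eta(u\cdot\Omega)(u\cdot\partial\Omega)$ part yields $\frac{2\eta c_{k+1,2}}{n-1}\partial\Omega$ after using $\proopp\partial\Omega = \partial\Omega$ (valid since $|\Omega|=1$ gives $\Omega\cdot\partial\Omega=0$). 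Identity \eqref{eq:intGw_partial} is analogous: its first part reduces via \eqref{eq:intGw} to $\frac{\eta'(\rho)\,\partial\rho}{n-1}(c_{k+2,2}-d_{2,0}c_{k,2})\,w$, and its second part vanishes by parity, being cubic in $v$.

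The main bookkeeping effort is \eqref{eq:intLaplacian}, where $\partial^2\GetaA/\GetaA$ is a sum of several terms, including the square $\big(\partial\eta((u\cdot\Omega)^2-d_{2,0}) + 2\eta(u\cdot\Omega)(u\cdot\partial\Omega)\big)^2$. Here I would expand and classify each term by its parity in $v$: the $\partial^2\eta$-term, the $\partial\eta\,\partial d_{2,0}$-term, the $(\partial\eta)^2$-part of the square, and the $(u\cdot\partial\Omega)^2$ and $(u\cdot\Omega)^2(u\cdot\partial\Omega)^2$ contributions all vanish (by \eqref{eq:intG} or \eqref{eq:Gw2_squared}); the survivors are the $4(u\cdot\Omega)(u\cdot\partial\Omega)$-term, the cross term $4\eta\,\partial\eta\,((u\cdot\Omega)^2-d_{2,0})(u\cdot\Omega)(u\cdot\partial\Omega)$, and the $2\eta(u\cdot\Omega)(u\cdot\partial^2\Omega)$-term. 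Collecting them via \eqref{eq:intGw} gives $\frac{4}{n-1}\big(c_{1,2}+\eta(c_{3,2}-d_{2,0}c_{1,2})\big)\partial\eta\,\partial\Omega$ together with $\frac{2c_{1,2}}{n-1}\eta\,\proopp\partial^2\Omega$; note that $\partial^2\Omega$ is \emph{not} orthogonal to $\Omega$ (differentiating $\Omega\cdot\partial\Omega=0$ gives $\Omega\cdot\partial^2\Omega = -|\partial\Omega|^2$), which is exactly why the projector $\proopp$ must remain. Finally, the parity relations $d_{k,p}=0$ for $k$ odd and $c_{k,p}=0$ for $k$ even follow from $\theta\mapsto\pi-\theta$: this fixes $g_\eta$ and $\sin\theta$, sends $\cos^k\theta\mapsto(-1)^k\cos^k\theta$, and sends $h_\eta(\cos\theta)\mapsto -h_\eta(\cos\theta)$ by oddness of $h_\eta$, so the integrands are antisymmetric about $\theta=\pi/2$ precisely when $k$ is odd (for $d$) or even (for $c$). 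The only delicate point throughout is tracking which projections survive and keeping the $\int_{\mathbb{S}^{n-2}}\d v$ normalization consistent with the convention $\int_{\mathbb{S}^{n-1}}\d u = 1$.
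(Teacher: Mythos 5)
Your proposal is correct and takes essentially the same route as the paper's proof: the change of variables \eqref{eq:change_of_variables}, oddness in $v$ (Lemma \ref{lem:v_integrals}, \eqref{eq:v_odd}), the isotropy identity \eqref{eq:v_squared} supplying the factor $\tfrac{1}{n-1}P_{\Omega^\perp}$, and substitution of the derivative formulas from Lemma \ref{lem:derivative_GetA} to reduce \eqref{eq:intGpartial}--\eqref{eq:intLaplacian} to the base integrals, with the same term-by-term bookkeeping for \eqref{eq:intLaplacian}. You even make explicit two points the paper leaves implicit, namely the $\theta\mapsto\pi-\theta$ symmetry argument for $c_{k,p}=0$ ($k$ even) and $d_{k,p}=0$ ($k$ odd), and the fact that $\Omega\cdot\partial^2\Omega=-|\partial\Omega|^2$, which is why the projector $P_{\Omega^\perp}$ must remain in front of $\partial^2\Omega$.
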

\begin{proof}
	The proof of this lemma can be found in Appendix \ref{appendix_lem_model_integrals}.
\end{proof}

Finally, notice that 
\begin{align}
	\partial \Omega \perp \Omega \label{eq:perpedincular_derivativeOmega}
\end{align}
for $\partial=\partial_t, \partial_{x_i}$ (since $\O \cdot \partial \O = \partial |\O|^2/2=0$). Therefore, it holds that
\begin{align}\label{eq:orthogonalityOmega}
	P_{\Omega^\perp} \partial\O = \partial \O.
\end{align}

Next, we consider the following integrals with respect to $v \in \S^{n-2}$.
\begin{lem}[Lemma 4.1 in \cite{DM-A20}] \label{lem:v_integrals}
	Let $n\geq 2$ and $v \in \S^{n-2}$. Then, we have 
	\begin{align}
			\int_{\S^{n-2}} v^{\otimes (2k+1)} \d v &=  0, \quad \forall k \in \mathbb{N}, \label{eq:v_odd}\\
			\int_{\S^{n-2}} v\otimes v \d v &= \frac{1}{n-1} \proopp, \label{eq:v_squared}\\
			\int_{\S^{n-2}} v\otimes v \otimes v \otimes v \d v & =  \frac{1}{(n-1)(n+1)}\Gamma \label{eq:v_4tensor},
	\end{align} where $\Gamma$ is given in \eqref{eq:Gamma}. 
\end{lem}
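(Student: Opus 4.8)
The three identities follow from the rotational invariance of the measure $\d v$ together with the classification of isotropic tensors on $\opp$; the constants are then fixed by taking traces. Throughout I use that, under the decomposition $\d u = \sin^{n-2}\theta\,\d\theta\,\d v$ from \eqref{eq:change_of_variables}, the measure $\d v$ is the uniform probability measure on $\S^{n-2}=\S^{n-1}\cap\opp$, normalized so that $\int_{\S^{n-2}}\d v=1$, and invariant under the orthogonal group $O(\opp)\cong O(n-1)$ acting on the $(n-1)$-dimensional subspace $\opp$.

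The identity \eqref{eq:v_odd} is immediate: the reflection $v\mapsto -v$ preserves $\d v$ but changes the sign of $v^{\otimes (2k+1)}$, so the integral equals its own negative and therefore vanishes. For \eqref{eq:v_squared} I would view $M:=\int_{\S^{n-2}} v\otimes v\,\d v$ as a symmetric $2$-tensor whose range lies in $\opp$. By the $O(\opp)$-invariance of $\d v$, $M$ commutes with every rotation of $\opp$, so Schur's lemma forces $M=c\,\proopp$ for some scalar $c$. Taking the trace and using $\operatorname{tr}(v\otimes v)=|v|^2=1$ and $\operatorname{tr}\proopp=n-1$ yields $c(n-1)=\int_{\S^{n-2}}\d v=1$, hence $c=\tfrac{1}{n-1}$.

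For \eqref{eq:v_4tensor} the same invariance argument shows that $T:=\int_{\S^{n-2}} v^{\otimes 4}\,\d v$ is an $O(\opp)$-invariant, fully symmetric $4$-tensor supported on $\opp$. The space of such tensors is one-dimensional and spanned by the symmetrized square of $\proopp$, i.e.\ by $\Gamma$ from \eqref{eq:Gamma}, so $T=c'\,\Gamma$. To pin down $c'$ I would contract over the two index pairs $(1,2)$ and $(3,4)$. On the one hand $\sum_{i,k}T_{iikk}=\int_{\S^{n-2}}|v|^4\,\d v=1$; on the other hand, using $\Gamma_{ijkl}=(\proopp)_{ij}(\proopp)_{kl}+(\proopp)_{ik}(\proopp)_{jl}+(\proopp)_{il}(\proopp)_{jk}$ together with $\proopp^2=\proopp$ and $\operatorname{tr}\proopp=n-1$, one computes
\begin{align*}
	\sum_{i,k}\Gamma_{iikk} = (\operatorname{tr}\proopp)^2 + 2\operatorname{tr}(\proopp^2) = (n-1)^2 + 2(n-1) = (n-1)(n+1).
\end{align*}
Therefore $c'(n-1)(n+1)=1$, i.e.\ $c'=\tfrac{1}{(n-1)(n+1)}$.

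The computation is entirely routine, so there is no real analytic obstacle; the only points requiring care are bookkeeping ones. The first is confirming the normalization $\int_{\S^{n-2}}\d v=1$ implicit in the stated constants, which is precisely the normalization making $\d v$ the uniform probability measure on $\S^{n-2}$ and which the factors $\tfrac{1}{n-1}$ and $\tfrac{1}{(n-1)(n+1)}$ encode. The second is the structural input, namely that the only invariant symmetric $2$- and $4$-tensors on $\opp$ are multiples of $\proopp$ and of $\Gamma$ respectively; this is where all the geometry enters, and once the tensorial form is fixed everything reduces to the trace identities above.
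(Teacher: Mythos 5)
Your proof is correct. Note that the paper contains no argument of its own for this lemma: it is imported verbatim as Lemma 4.1 of \cite{DM-A20} and the proof is explicitly skipped, so there is nothing in-paper to compare against line by line. Your route is the standard invariant-theory one and all the steps check out: oddness under $v\mapsto -v$ gives \eqref{eq:v_odd}; $O(\opp)$-invariance together with the classification of invariant symmetric $2$- and $4$-tensors on $\opp$ (spanned by $\proopp$ and by $\Gamma$ from \eqref{eq:Gamma}, respectively, the latter by the first fundamental theorem for $O(n-1)$ plus full symmetry) reduces everything to constants, and the trace contractions fix them correctly, since
\begin{align*}
\sum_{i,k}\Gamma_{iikk}=(\operatorname{tr}\proopp)^2+2\operatorname{tr}(\proopp^2)=(n-1)^2+2(n-1)=(n-1)(n+1).
\end{align*}
The argument also survives the degenerate case $n=2$, where $\S^{0}=\{\pm\O^\perp\}$ and $O(\opp)\cong O(1)$: the Schur/classification step is then trivial, but the identities still hold by direct evaluation, using $\Gamma=3\,(\O^\perp)^{\otimes 4}$ and $(n-1)(n+1)=3$.

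The one point you were right to isolate is the normalization, and it deserves emphasis: the constants in \eqref{eq:v_squared}--\eqref{eq:v_4tensor} hold if and only if $\int_{\S^{n-2}}\d v=1$, as taking the trace of \eqref{eq:v_squared} shows immediately. Be aware that this sits in mild tension with the paper's own conventions, since $\int_{\S^{n-1}}\d u=1$ combined with $\d u=\sin^{n-2}\theta\,\d\theta\,\d v$ from \eqref{eq:change_of_variables} would force $\int_{\S^{n-2}}\d v=\bigl(\int_0^\pi\sin^{n-2}\theta\,\d\theta\bigr)^{-1}\neq 1$; the lemma as stated in \cite{DM-A20} uses the uniform probability measure on $\S^{n-2}$, which is exactly what you assumed, so your proof matches the intended statement.
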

The proof of Lemma \ref{lem:v_integrals} can be found in Lemma 4.1 of \cite{DM-A20}. Thus, we skip it here. 
\medskip 

Now, with these preliminary results, we first present the limit of $f^\eps$ as $\eps \to 0$.

\subsection{Limit for \texorpdfstring{$f^\eps$}{f\^e}}

\begin{lem}[Limit of $f^\eps$ and $\rho_{f^\eps} Q_{f^\eps}$] \label{lem:limitf_and_Q} Let $f^\eps$ be the solution of Equation \eqref{eq:dimless_kinetic_eq}. Under Assumption \ref{as:A}, we have that
	\begin{align} \label{eq:limit_f}
		f^\eps (t,x,u) \to \rho(t,x) \GetaA(t,x) \quad \mbox{as } \eps \to 0, 
	\end{align}
	for any $(t,x)$ such that $\rho(t,x) > \rho_*$, for $\rho_*$ as given in Proposition \ref{prop:def_eta}. Moreover, in this same regime of $(t,x)$, it also holds that
	\begin{align} \label{eq:limit_rhoQ_2}
		\rho_{f^\eps} Q_{f^\eps} \to \frac{\eta(\rho)}{\alpha}A_\O, \quad \mbox{as } \eps \to 0, 
	\end{align} where $\alpha$ is given in \eqref{eq:def:alpha}, $\eta=\eta(\rho)$ is given in \eqref{eq:etaroots}, and $A_\O$ is given in \eqref{def:A_Om}.
\end{lem}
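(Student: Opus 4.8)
The plan is to perform the formal hydrodynamic limit by isolating the dominant (stiff) term in the rescaled kinetic equation \eqref{eq:rescaled_kinetic}. Multiplying that equation by $\eps^a$ and recalling that $a \in (0,2]$, every term on the left-hand side carries a strictly positive power of $\eps$: the time derivative $\partial_t f^\eps$, the two transport terms involving $\rho_{f^\eps}Q_{f^\eps}$ and $\nabla_x \rho_{f^\eps}$, and the linear diffusion $-D_x \Delta_x f^\eps$ are each multiplied by $\eps^a$, while the term $-\lambda \eps^{2-a}\nabla_u\cdot((\nabla_u B_f)f^\eps)$ becomes of order $\eps^2$; the remainder is then $\mathcal{O}(\eps^{a+\min\{2,3-a\}})$, which is also $o(1)$. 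Hence, under Assumption \ref{as:A}, letting $\eps\to 0$ and passing the limit through the (assumed sufficiently strong) convergence $f^\eps \to f^0$ annihilates the entire left-hand side and leaves $D_u\, C(f^0)=0$, i.e.\ $f^0$ lies in the kernel of the operator $C$ defined in \eqref{eq:defC}.

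Next I would identify this equilibrium. The kernel of $C$ has been characterized in \cite{DFL22}; in the regime $\rho(t,x)>\rho_*$ considered here, Lemma \ref{lem:stable_equilibria}(ii) states that the only \emph{stable} equilibria of $C$ are the anisotropic Gibbs states $\rho\,\GetaA$. Since the limit of a family generated by the dynamics must be a stable equilibrium, I conclude that $f^0(t,x,u)=\rho(t,x)\,\GetaA(t,x)$, where $\rho(t,x)=\ints f^0\,\d u$ and $\Omega$ is the associated mean-nematic direction; this is precisely \eqref{eq:limit_f}.

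For \eqref{eq:limit_rhoQ_2} I would simply combine this identification with the consistency relation. By Lemma \ref{lem:stable_equilibria}, the equilibrium $f^0=\rho\,\GetaA$ satisfies $\alpha\rho\,Q_{\GetaA}=\eta(\rho)A_\O$, that is \eqref{eq:limit_rhoQ}, with $\alpha$ as in \eqref{eq:def:alpha}. Using that Assumption \ref{as:A} also guarantees convergence of the second moment $\rho_{f^\eps}Q_{f^\eps}=\ints\lp u\otimes u - \tfrac{1}{n}\Id\rp f^\eps\,\d u$ towards $\rho_{f^0}Q_{f^0}$, I obtain $\rho_{f^\eps}Q_{f^\eps}\to \tfrac{\eta(\rho)}{\alpha}A_\O$, as claimed.

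The main obstacle is the selection of the stable equilibrium in the second step: the operator $C$ also admits the uniform state $f=\rho$ as an equilibrium, so the purely formal identity $C(f^0)=0$ does not by itself exclude $f^0=\rho$. Ruling this out relies on the stability theory of \cite{DFL22}, whereby for $\rho>\rho_*$ the uniform equilibrium is linearly unstable and therefore not selected as the limiting profile; encoding this selection principle in Assumption \ref{as:A} is what legitimizes the argument at the formal level on which we operate. A secondary technical point is that the convergence must be strong enough for both the zeroth moment $\rho_{f^\eps}$ and the second moment $\rho_{f^\eps}Q_{f^\eps}$ to pass to the limit together with the equilibrium profile, which is again exactly the content of Assumption \ref{as:A}.
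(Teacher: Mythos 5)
Your proposal is correct and follows essentially the same route as the paper's proof: isolate the stiff term in \eqref{eq:rescaled_kinetic} to conclude $C(f^0)=0$, invoke the characterization of stable equilibria in Lemma \ref{lem:stable_equilibria} to identify $f^0=\rho\,\GetaA$ in the regime $\rho>\rho_*$, and obtain \eqref{eq:limit_rhoQ_2} directly from the consistency relation \eqref{eq:limit_rhoQ} together with convergence of the second moment. Your explicit observation that $C(f^0)=0$ alone does not rule out the uniform equilibrium, and that selecting the stable branch is formally delegated to Assumption \ref{as:A} and the stability theory of \cite{DFL22}, is a more careful articulation of a step the paper passes over silently, but it does not change the argument.
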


\begin{proof}
	By Assumption \ref{as:A}, $f^\eps$ converges to some function $f^0$ as $\eps \to 0$. At the same time, from Equation \eqref{eq:rescaled_kinetic} we have that $C(f^\eps)=\mathcal{O}(\eps^a)$ with $a \in (0,2]$ and thus, taking the limit, $C(f^0)=0$ as $\eps \to 0$. Therefore, the limit $f^0$ must belong to the kernel of the operator $C$, which was characterized in Lemma \ref{lem:stable_equilibria}. Therefore, we conclude that $f^0=\rho(t,x)\GetaA(t,x)$ for any $(t,x)$ such that $\rho(t,x) > \rho_*$ and for some $\Omega \in \S^{n-1}$. Finally, the limit \eqref{eq:limit_rhoQ_2} is a direct consequence of \eqref{eq:limit_rhoQ} in Lemma \ref{lem:stable_equilibria}.
\end{proof}

\subsection{Derivation of Equation (\ref{eq:macro_rho})} 
Integrating the kinetic equation \eqref{eq:rescaled_kinetic} with respect to $u$, and using Lemma \ref{lem:limitf_and_Q}  in the limit $\eps \to 0$, we obtain
\begin{align}\label{eq:rho_proof}
	\p_t \rho + \frac{\mu \chi^2}{\alpha} \nabla_x \cdot \lp \rho \int_{\S^{n-1}} \left( \nabla_x \lp u\cdot ( \eta(\rho)A_\O ) u \rp  \GetaA \right) \d u \rp+ \mu\lp\frac{\chi^2}{n}-1\rp \nabla_x \cdot \lp \rho 
	\nabla_x\rho \rp - D_x \Delta_x \rho = 0. 
\end{align} Notice that, above, we used the facts that the term in $B_f$ vanishes and $\ints C(f^\eps) du =0$ for all $\eps >0$, both thanks to the divergence theorem. 

Next, we compute the integral in the second term of \eqref{eq:rho_proof},
\begin{align*}
	\int_{\S^{n-1}} \nabla_x &\lp \etarho u\cdot A_\O u \rp   \GetaA  \d u   \\
	& =  \int_{\S^{n-1}}    \nabla_x \lp \etarho \left[(u\cdot \O)^2-\frac{1}{n}\right] \rp  \GetaA  \d u  \\
	& =  \ints \nabla_x \eta(\rho) \left( (u\cdot \O)^2-\frac{1}{n}\right) \GetaA \d u + \ints 2 \eta(\rho) (u \cdot \O) (u\cdot \nabla_x \O) \GetaA \d u \\
	& = \frac{n-1}{n}S_2(\eta)\, \nabla_x \etarho,
\end{align*} where in the last equality we used the fact that the second integral is zero.  This can be computed similarly as for the integral in Equation \eqref{eq:intGw} for $k=1$ and $w=\partial \Omega$ with $\partial \Omega \perp \Omega$ (remembering that the matrix $\nabla_x\Omega$ is such that component-wise it corresponds to $(\nabla_x\Omega)_{ij}=\partial_{x_j} \Omega_i$).  

Finally, substituting this last expression into \eqref{eq:rho_proof} and using that $\nabla_x\eta(\rho)=\eta'(\rho) \nabla_x \rho$, we can combine the terms and obtain Equation \eqref{eq:macro_rho} for the mass density $\rho$. 

\subsection{Derivation of Equation (\ref{eq:macro_Omega}).} Classically, to obtain macroscopic equations from kinetic equations, one needs to find functions $\psi=\psi(u)$ that are collision invariant for the operator $C$, i.e., functions $\psi$ such that, for all $f$, the following holds:
\begin{align*}
	\int_{\S^{n-1}}C(f)\psi(u)\d u=0.
\end{align*} 
The collision invariants correspond to conserved quantities. Typically, the number of macroscopic equations we obtain is the same as the number of collision invariants. For example, in our case, if $\psi$ is a constant, then $\psi$ is a collision invariant, which is related to the conservation of mass.
Therefore, to obtain an equation of the mean-nematic direction $\O$, we would like to find its associated collision invariant(s). However, $\O$ does not have any associated conserved quantity, i.e., one can check that the only collision invariants for our operator $C$, given in \eqref{eq:defC}, are the constants \cite{DFL22}.

To overcome this difficulty, the concept of Generalized Collision Invariant (GCI) was introduced in \cite{DM08} for a model of collective dynamics where the momentum is not preserved. This idea, then, has been applied to other models of collective dynamics, see, e.g., \cite{DFM-AT18,DM-A20}. Moreover, the GCI for the operator $C$ was already computed in \cite{DFL22}. Next, we state the main result regarding the GCI that we will be using in the sequel. The original source \cite{DFL22} contains a full proof of it. 

\begin{prp}[Generalized Collision Invariant defined in \cite{DFL22} and Proposition 9 in \cite{DFL22}] \label{prop:GCI_property}
		Let \linebreak $f \colon \S^{n-1} \to \R$ be twice continuously differentiable such that $Q_f\neq 0$ and $Q_f$ is the $Q$-tensor. Since $Q_f$ is a symmetric matrix, all its eigenvalues are real. Suppose that the largest eigenvalue of $Q_f$ is unique and denote $\Omega_f$ as the associated normalized eigenvector (which is unique up to sign). Then it holds that
		\begin{align*}
			\int_{\S^{n-1}} C(f)\,  \boldsymbol{\psi}_{\eta_f,\Omega_f} \d u =0, \quad \mbox{ for }\quad \boldsymbol{\psi}_{\eta_f,\Omega_f} :=h_{\eta_f}(u\cdot\Omega_f) P_{\Omega^\perp_f}u,
		\end{align*} where $h_\eta$ is the unique solution of
\begin{align*}
			-(1-r^2)^{\frac{n-1}{2}} e^{\eta r^2}(2\eta\, r^2 +n-1) h_\eta +\frac{\d}{\d r}\left[(1-r^2)^{\frac{n+1}{2}}e^{\eta r^2} \frac{\d h}{\d r}\right]=r(1-r^2)^{\frac{n-1}{2}}e^{\eta r^2},
		\end{align*}
  in the functional space
  \begin{align}
	\mathcal{H}=\left\{ h: (-1,1)\to \R \, \, \Big  | \,  \int_{-1}^1 (1-r^2)^{\frac{n-1}{2}} |h(r)|^2 \d r < \infty, \,  \int_{-1}^1 (1-r^2)^{\frac{n+1}{2}} \left |  h'(r) \right |^2 \d r < \infty \right\}.
\end{align}
  Moreover, $h_\eta$ is an odd function with $h_\eta(r)\leq 0$ for $r\geq 0$. 
\end{prp}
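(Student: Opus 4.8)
The plan is to construct $h_\eta$ by the standard Generalized Collision Invariant (GCI) procedure, working with the operator $C$ in which the drift is frozen at the uniaxial Gibbs weight $G_{\eta A_\Omega}$ — which, by the consistency relation \eqref{eq:limit_rhoQ}, coincides with $C$ on the uniaxial equilibria that are relevant for the limit. Indeed, on such a state $\alpha\,\rho_f Q_f = \eta A_\Omega$, so $\alpha\nabla_u(u^T(\rho_f Q_f)u) = \nabla_u\log G_{\eta A_\Omega}$ and hence $C(f) = \nabla_u\cdot\big(G_{\eta A_\Omega}\,\nabla_u(f/G_{\eta A_\Omega})\big)$. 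Integrating by parts twice against a test function $\psi$ yields
\[
\ints C(f)\,\psi\,\d u = \ints f\,\Big(\tfrac{1}{G_{\eta A_\Omega}}\nabla_u\cdot(G_{\eta A_\Omega}\nabla_u\psi)\Big)\,\d u ,
\]
which exhibits the self-adjoint weighted operator $\mathcal{L}^*\psi := G_{\eta A_\Omega}^{-1}\nabla_u\cdot(G_{\eta A_\Omega}\nabla_u\psi)$ acting on $\psi$. Since $\Omega$ is an eigendirection of $Q_f$ exactly when $P_{\Omega^\perp}(\rho_f Q_f)\Omega = \ints (u\cdot\Omega)\,P_{\Omega^\perp}u\,f\,\d u = 0$, the natural requirement defining a GCI is that $\mathcal{L}^*\psi$ lie in the span of these $n-1$ linear constraints; that is, I would solve $\mathcal{L}^*\psi = (u\cdot\Omega)(w\cdot P_{\Omega^\perp}u)$ for an arbitrary fixed $w\in\Omega^\perp$.

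Next I would insert the axially symmetric ansatz $\psi = h(u\cdot\Omega)\,(w\cdot P_{\Omega^\perp}u)$ and pass to the coordinates $(\theta,v)$ of \eqref{eq:change_of_variables}, with $r=\cos\theta = u\cdot\Omega$. Since $w\cdot P_{\Omega^\perp}u = \sin\theta\,(w\cdot v)$ is a degree-one spherical harmonic in $v\in\S^{n-2}$, the angular part separates, and after accounting for the Jacobian $\sin^{n-2}\theta$ and the Gibbs factor $e^{\eta r^2}$ the operator $\mathcal{L}^*\psi = \Delta_u\psi + \nabla_u\log G_{\eta A_\Omega}\cdot\nabla_u\psi$ reduces to a one-dimensional Sturm--Liouville operator in $h$. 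Matching with the right-hand side $(u\cdot\Omega)(w\cdot P_{\Omega^\perp}u)$ then gives exactly the stated ODE for $h_\eta$, with zeroth-order coefficient $2\eta r^2 + n-1$ and weights $(1-r^2)^{(n\pm1)/2}e^{\eta r^2}$.

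The analytic core is the existence and uniqueness of $h_\eta$ in the degenerate weighted space $\mathcal{H}$, which I would obtain by Lax--Milgram applied to the weak form with bilinear form
\[
a(h,\phi) = \int_{-1}^1 (1-r^2)^{\frac{n+1}{2}}e^{\eta r^2}\,h'\phi'\,\d r + \int_{-1}^1 (1-r^2)^{\frac{n-1}{2}}e^{\eta r^2}(2\eta r^2+n-1)\,h\phi\,\d r
\]
and load $\phi\mapsto \int_{-1}^1 r(1-r^2)^{\frac{n-1}{2}}e^{\eta r^2}\phi\,\d r$. The load is a bounded functional on $\mathcal{H}$ by Cauchy--Schwarz against the weight $(1-r^2)^{(n-1)/2}$, and coercivity is immediate since $2\eta r^2+n-1\ge n-1>0$ and $e^{\eta r^2}\ge 1$, so both seminorms of $\mathcal{H}$ are controlled by $a(h,h)$. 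Crucially, the strictly positive zeroth-order coefficient gives the operator a trivial kernel, so no Fredholm solvability condition is required and the solution is unique. I expect the main obstacle to be the degeneracy of the weights at $r=\pm1$: one must verify continuity and coercivity of $a$ in the intrinsic (degenerate) norm of $\mathcal{H}$, the density of smooth functions so that the two integrations by parts leave no boundary terms at the poles, and enough control of the boundary behaviour of elements of $\mathcal{H}$ to justify the pointwise arguments below.

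Finally I would read off the two qualitative properties and close the identity. For parity, the weights $(1-r^2)^{(n\pm1)/2}e^{\eta r^2}$ are even and the load $r(1-r^2)^{(n-1)/2}e^{\eta r^2}$ is odd, so $r\mapsto -h_\eta(-r)$ solves the same problem; uniqueness forces $h_\eta$ to be odd, in particular $h_\eta(0)=0$. For the sign, writing the ODE as $(p h')' - q h = g$ with $p,q>0$ and $g(r)\ge 0$ on $[0,1)$, a maximum-principle argument excludes a positive interior maximum (there $h'=0$ and $h''\le 0$ give $(ph')'\le 0$ while $-qh<0$, contradicting $g\ge0$); together with $h_\eta(0)=0$ and the controlled behaviour at $r=1$ this gives $h_\eta(r)\le 0$ for $r\ge0$. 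The GCI identity then follows by construction: with $\psi_{\eta_f,\Omega_f}=h_{\eta_f}(u\cdot\Omega_f)P_{\Omega_f^\perp}u$,
\[
\ints C(f)\,\psi_{\eta_f,\Omega_f}\,\d u = \ints (u\cdot\Omega_f)\,P_{\Omega_f^\perp}u\,f\,\d u = P_{\Omega_f^\perp}(\rho_f Q_f)\Omega_f = 0,
\]
the last equality holding precisely because $\Omega_f$ is an eigenvector of $Q_f$.
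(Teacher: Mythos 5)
First, note that the paper does not prove this proposition at all: it is imported verbatim from \cite{DFL22}, and the text immediately before it states explicitly that ``the original source \cite{DFL22} contains a full proof of it.'' So the benchmark is the argument of \cite{DFL22}, and your construction follows essentially that route: two integrations by parts on the closed manifold $\S^{n-1}$ (no boundary terms, so your worry about poles is moot in the spherical picture) to reach the dual weighted operator, the axially symmetric ansatz $h(u\cdot\Omega)(w\cdot P_{\Omega^\perp}u)$ separating into the stated Sturm--Liouville problem, Lax--Milgram in $\mathcal{H}$ --- where coercivity is indeed immediate from $2\eta r^2+n-1\geq n-1>0$ and no Fredholm alternative is needed --- and parity via uniqueness. (A harmless slip: the weak form of the stated ODE is $a(h,\phi)=-\int_{-1}^1 g\,\phi\,\d r$ with $g(r)=r(1-r^2)^{\frac{n-1}{2}}e^{\eta r^2}$; you dropped the minus sign, which does not affect existence or uniqueness.)

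There are, however, two genuine gaps. First, your rewriting $C(f)=\nabla_u\cdot\big(G_{\eta A_\Omega}\nabla_u(f/G_{\eta A_\Omega})\big)$ requires $\alpha\rho_f Q_f=\eta_f A_{\Omega_f}$, i.e., a \emph{uniaxial} $Q$-tensor, and you also leave $\eta_f$ undefined (it should be $\eta_f=\frac{n}{n-1}\alpha\rho_f\beta_f$, with $\beta_f$ the top eigenvalue). In $n=2$ every symmetric trace-free matrix is a multiple of some $A_\Omega$, so your closing chain is complete there; but the proposition assumes only that the largest eigenvalue of $Q_f$ is simple, and in $n=3$ a generic such $Q_f$ is biaxial. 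Then $G_{\alpha\rho_f Q_f}$ is not axially symmetric, the frozen-weight identity for $C(f)$ fails, and the step $\int_{\S^{n-1}} C(f)\,\boldsymbol{\psi}\,\d u=\int_{\S^{n-1}} f\,(u\cdot\Omega_f)P_{\Omega_f^\perp}u\,\d u$ no longer follows. This is precisely why \cite{DFL22} (cf.\ Definition \ref{def:vec_GCI} above, with general $\Lambda\in\mathcal{U}_n^0$) defines the GCI with the weight $G_{\eta_f\Lambda_f}$, $\eta_f\Lambda_f=\alpha\rho_f Q_f$, proves solvability for that general weight, and only afterwards specializes to $\Lambda=A_\Omega$ to obtain the explicit $h_\eta$-formula; your argument proves the uniaxial case actually used in the macroscopic limit, not the statement as written. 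Second, your sign argument leans on ``controlled behaviour at $r=1$,'' which is exactly what is unavailable: elements of $\mathcal{H}$ may be unbounded at the endpoints, so the interior maximum principle does not close. The standard repair stays variational: $h_\eta$ is continuous on $(-1,1)$ and odd, so $h_\eta(0)=0$ and $\phi=h_\eta^+\1_{(0,1)}$ is an admissible test function in $\mathcal{H}$; testing the weak form gives
\begin{align*}
\int_0^1 p\,\big|(h_\eta^+)'\big|^2\,\d r+\int_0^1 q\,(h_\eta^+)^2\,\d r \;=\; -\int_0^1 g\, h_\eta^+\,\d r\;\leq\;0,
\end{align*}
with $p=(1-r^2)^{\frac{n+1}{2}}e^{\eta r^2}$ and $q=(1-r^2)^{\frac{n-1}{2}}e^{\eta r^2}(2\eta r^2+n-1)>0$, forcing $h_\eta^+\equiv 0$ on $(0,1)$ with no endpoint analysis whatsoever.
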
  
  
A characterization of the GCI is given by the following definition and can be found in \cite{DFL22}:

\begin{dfn}\label{def:vec_GCI}
		Let $(\eta, \Lambda) \in (0,\infty) \times \mathcal{U}_n^0$. Then, we define the function $\psi_{\eta \Lambda} \colon \S^{n-1}~\to~\R^n$ as the unique solution (in $H^1(\S^{n-1})=\left\{ k \in H(\S^{n-1})| \int_{\S^{n-1}} k(u) \d u =0 \right\}$) of the following equation:
  \begin{align*}
      \nabla_u \cdot (G_{\eta \Lambda}(u) \nabla_u \psi_{\eta \Lambda})=(u\cdot \O_\Lambda) P_{\O_\Lambda^\perp} u \, G_{\eta \Lambda}(u) \qquad \forall u \in \S^{n-1},
  \end{align*} where  $\mathcal{U}^0_n$ is the set of symmetric trace-free $n\! \times\! n$-matrices whose principal eigenvalue is equal to $\frac{n-1}{n}$ and is simple.
\end{dfn}

Thanks to the GCI for the collision operator $C$ we can now derive an equation for the mean-nematic direction $\Omega$. Therefore, we multiply the kinetic equation given in \eqref{eq:dimless_kinetic_eq} by $\psi_{\eta \O_f}$ and integrate with respect to $u$. Then, the term in $C$ vanishes by the previous proposition. Therefore, we state the following result:

\begin{prp}
	Under the assumptions of Theorem \ref{th:macroscopic limit}, it holds that
	\begin{align} 	\label{eq:macro_GCI_Omega}
			\int_{\S^{n-1}}   \mathcal{F}_u(\rho G_{\eta(\rho) A_\Omega} ) \GCI \, \d u =0, \quad \mbox{ for } \quad \GCI = h_{\eta(\rho)}(u\cdot \Omega) P_{\Omega^\perp}u,
	\end{align} where $\eta=\eta(\rho)$ is given by \eqref{eq:etaroots}, $h_\eta=h_{\eta(\rho))}(u\cdot \Omega)$ is defined in Proposition \ref{prop:GCI_property} and $\mathcal{F}_u(f)$ is defined as  
	\begin{multline*}
		\mathcal{F}_u (f) =\partial_t f  + \mu \chi^2  \nabla_x \cdot (\nabla_x (u^T \rho_f Q_f u) f) \\ + \mu \left ( \frac{\chi^2}{n} -1 \right ) \nabla_x \cdot  ((\nabla_x \rho_f) f) - \lambda \1_{a=2} \nabla_u \cdot ((\nabla_u B_f) f) -D_x  \Delta_x f.
	\end{multline*} 
\end{prp}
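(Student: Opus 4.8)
The plan is to obtain \eqref{eq:macro_GCI_Omega} as the weak form of the rescaled kinetic equation \eqref{eq:rescaled_kinetic} tested against the Generalized Collision Invariant, then to send $\eps\to0$. The key preliminary point is that, for $\eps$ small, the GCI is well defined along the family $f^\eps$: by Lemma \ref{lem:limitf_and_Q} we have $\rho_{f^\eps}Q_{f^\eps}\to\frac{\eta(\rho)}{\alpha}A_\O$, which is uniaxial with a \emph{simple} principal eigenvalue whenever $\rho>\rho_*$ (so $\eta(\rho)>0$). Hence for $\eps$ small enough $Q_{f^\eps}$ has a unique largest eigenvalue, its normalized principal eigenvector $\Omega_{f^\eps}$ is defined up to sign, and the associated invariant $\boldsymbol{\psi}_{\eta_{f^\eps},\Omega_{f^\eps}}=h_{\eta_{f^\eps}}(u\cdot\Omega_{f^\eps})P_{\Omega^\perp_{f^\eps}}u$ of Proposition \ref{prop:GCI_property} exists. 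First I would multiply \eqref{eq:rescaled_kinetic} by this $\eps$-adapted GCI and integrate over $\S^{n-1}$.

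The crucial cancellation is that
$$\int_{\S^{n-1}}C(f^\eps)\,\boldsymbol{\psi}_{\eta_{f^\eps},\Omega_{f^\eps}}\,\d u=0$$
holds \emph{exactly}, for every $\eps>0$, by the defining property of the GCI in Proposition \ref{prop:GCI_property} (recall $C$ is the operator \eqref{eq:defC} with $\alpha$ as in \eqref{eq:def:alpha}). This is precisely what licenses discarding the singular right-hand side $\frac{D_u}{\eps^a}C(f^\eps)$: although its prefactor blows up, the integral against the correctly adapted GCI vanishes identically, so no negative power of $\eps$ survives. After this cancellation the tested identity reads
\begin{multline*}
\int_{\S^{n-1}}\Big[\partial_t f^\eps+\mu\chi^2\nabla_x\cdot(\nabla_x(u^T\rho_{f^\eps}Q_{f^\eps}u)f^\eps)+\mu\big(\tfrac{\chi^2}{n}-1\big)\nabla_x\cdot((\nabla_x\rho_{f^\eps})f^\eps)\\
-\lambda\,\eps^{2-a}\nabla_u\cdot((\nabla_u B_{f^\eps})f^\eps)-D_x\Delta_x f^\eps\Big]\boldsymbol{\psi}_{\eta_{f^\eps},\Omega_{f^\eps}}\,\d u=\mathcal{O}(\eps^{\min\{2,3-a\}}).
\end{multline*}

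The remaining work is $\eps$-bookkeeping and the limit. The power $\eps^{2-a}$ carried by the $B_{f^\eps}$ term is exactly what produces the indicator $\1_{a=2}$: for $a\in(0,2)$ it tends to $0$ and this term drops out, whereas for $a=2$ the prefactor is $1$ and the term persists; the residual $\mathcal{O}(\eps^{\min\{2,3-a\}})$ vanishes. I would then pass to the limit using Lemma \ref{lem:limitf_and_Q}, so that $f^\eps\to\rho\GetaA$ and $\rho_{f^\eps}Q_{f^\eps}\to\frac{\eta(\rho)}{\alpha}A_\O$, and by continuous dependence of the GCI on its parameters $(\eta,\Omega)$ (Definition \ref{def:vec_GCI}) obtain $\boldsymbol{\psi}_{\eta_{f^\eps},\Omega_{f^\eps}}\to h_{\eta(\rho)}(u\cdot\Omega)P_{\Omega^\perp}u=\GCI$. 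Collecting the surviving terms yields precisely $\int_{\S^{n-1}}\mathcal{F}_u(\rho\GetaA)\,\GCI\,\d u=0$.

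The hard part will be the justification of this limit passage rather than any computation. Concretely, one must exchange the $\eps\to0$ limit with integration over $\S^{n-1}$ and with the $t$- and $x$-derivatives, argue that $\Omega_{f^\eps}\to\Omega$ with the principal eigenvalue of $Q_{f^\eps}$ remaining simple along the whole family (so the tested GCI is defined uniformly and converges to $\GCI$), and control the nonlinear factor $u^T\rho_{f^\eps}Q_{f^\eps}u$ in the limit via the consistency relation \eqref{eq:limit_rhoQ}. Under Assumption \ref{as:A}, which grants all needed smoothness and strong convergence, these steps are taken for granted; the genuinely delicate analytical issues (uniform integrability and continuity of $(\eta,\Omega)\mapsto\boldsymbol{\psi}_{\eta,\Omega}$) are therefore absorbed into that assumption, leaving the exact vanishing of the collision term and the $\eps$-power accounting as the substantive content.
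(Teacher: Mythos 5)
Your proposal is correct and takes essentially the same route as the paper: you test the rescaled kinetic equation \eqref{eq:rescaled_kinetic} against the GCI adapted to $f^\eps$ so that the singular term $\frac{D_u}{\eps^a}\int_{\S^{n-1}} C(f^\eps)\,\boldsymbol{\psi}_{\eta_{f^\eps},\Omega_{f^\eps}}\,\d u$ vanishes exactly by Proposition \ref{prop:GCI_property}, then pass to the limit via Lemma \ref{lem:limitf_and_Q} under Assumption \ref{as:A}, with the prefactor $\eps^{2-a}$ of the $B_{f^\eps}$ term producing the indicator $\1_{a=2}$ and the remainder $\mathcal{O}(\eps^{\min\{2,3-a\}})$ vanishing. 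The paper's own proof is a one-line citation of precisely these three ingredients, so your additional bookkeeping (simplicity of the principal eigenvalue of $Q_{f^\eps}$ for small $\eps$, continuity of $(\eta,\Omega)\mapsto\boldsymbol{\psi}_{\eta,\Omega}$) merely makes explicit what the paper leaves implicit.
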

\begin{proof}
	The proof is a direct consequence of Assumption \ref{as:A}, Lemma \ref{lem:limitf_and_Q} and Proposition \ref{prop:GCI_property}.
\end{proof}

\subsubsection{Limit of the terms not involving \texorpdfstring{$B_f$}{B\_f}}

Now, we have all the ingredients to derive the macroscopic equation for $\Omega$ in a straightforward way. 
We apply Lemma \ref{lem:limitf_and_Q} in the limit as $\eps \to 0$ and rewrite Equation \eqref{eq:macro_GCI_Omega} as  
\begin{align}
 \label{eq:Is}
	I_1+\mu\sigma I_2+\mu\lp\frac{\chi^2}{n}-1 \rp I_3-\lambda \1_{a=2}I_4 -D_x I_5=0,
\end{align} where 
\begin{align*}
	I_1&= \int_{\S^{n-1}} \partial_t \lp \rho \GetaA \rp \GCI \d u, \\    
	I_2&= \int_{\S^{n-1}} \nabla_x \cdot  \lp \nabla_x \lp u\cdot \eta(\rho) A_\O u  \rp \rho \GetaA\rp \GCI \d u,\\ 
	I_3&= \int_{\S^{n-1}} \nabla_x \cdot  \lp (\nabla_x\rho) \, \rho \GetaA\rp \GCI \d u , \\ 
	I_4&=\int_{\S^{n-1}} \nabla_u \cdot \lp  \nabla_u  \lp B_{\rho \GetaA}\rp \rho \GetaA \rp \GCI \d u, \\ 
	I_5 &=\int_{\S^{n-1}} \Delta_x \lp \rho \GetaA \rp \GCI \d u.
\end{align*}
Notice that while writing $I_2$ we used \eqref{eq:limit_rhoQ}.
In this section, we will focus only on the terms $I_1,I_2,I_3$ and $I_5$ since the term $I_4$ appears exclusively if we choose $a=2$. We will compute $I_4$ in the next section.

\begin{lem} \label{lem:limit_I1_I2_I4}
	The following equalities hold:
	 \begin{align*}
		I_1= & \frac{2 \eta(\rho) c_{1,2}}{n-1} \ \rho\p_t \O, \\
		I_2=& \frac{2\eta(\rho) c_{1,2}}{n-1}\lp 2\frac{\eta^\prime(\rho)}{\eta(\rho)} \rho  +1+ \rho  \eta^\prime(\rho) \lp 2\frac{c_{3,2}}{c_{1,2}}-\frac{1}{n}-d_{2,0} \rp\rp (\nabla_x \rho\cdot\nabla_x)\O + \frac{2 \eta(\rho) c_{1,2}}{n-1}\ \rho \proopp \Delta_x \O,\\
		I_3 =& \frac{2 \eta(\rho) c_{1,2}}{n-1}\ \rho(\nabla_x\rho\cdot\nabla_x)\O,\\
		I_5=&\frac{2 \eta(\rho) c_{1,2}}{n-1} \ \rho P_{\Omega^\perp}\Delta_x\O+ \frac{2 \eta(\rho) c_{1,2}}{n-1}\ 2\lp \rho \eta'\left(\frac{1}{\eta(\rho)}+\frac{c_{3,2}}{c_{1,2}}-d_{2,0})\right )+1\rp(\nabla_x\rho\cdot\nabla_x)\O.
	\end{align*}
\end{lem}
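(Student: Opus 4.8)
The plan is to treat $I_1, I_2, I_3, I_5$ by a single mechanism: expand the $x$-derivatives with the product rule (the only $x$-dependence is through $\rho(t,x)$ and $\Omega(t,x)$, with $\eta=\eta(\rho)$), rewrite every derivative of $\GetaA$ via Lemma \ref{lem:derivative_GetA} as $\GetaA$ times a polynomial in $(u\cdot\Omega)$ and the transverse contractions $(u\cdot\partial\Omega)$, and then integrate against $\GCI=\heo\,\proopp u$ using the model integrals of Lemma \ref{lem:model_integrals}. The three facts doing all the simplification are: (a) any integral of a pure power $(u\cdot\Omega)^k$ against $\GetaA\GCI$ vanishes by \eqref{eq:intG}; (b) any term quadratic in a transverse factor, of the form $(u\cdot\Omega)^{2k}(u\cdot w)^2$ with $w\perp\Omega$, vanishes by \eqref{eq:Gw2_squared}; and (c) $\partial\Omega\perp\Omega$, so $\proopp\partial\Omega=\partial\Omega$ by \eqref{eq:orthogonalityOmega}, whereas $\Delta_x\Omega$ need not be orthogonal to $\Omega$ and so its projection must be kept explicit.

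For $I_1$ I would write $\partial_t(\rho\GetaA)=(\partial_t\rho)\GetaA+\rho\,\partial_t\GetaA$; the first summand integrates to zero by \eqref{eq:intG} with $k=0$, and the second is exactly \eqref{eq:intGpartial} with $k=0$, giving the stated coefficient times $\rho\,\partial_t\Omega$. For $I_3$, expanding $\nabla_x\cdot((\nabla_x\rho)\rho\GetaA)$ the scalar-times-$\GetaA$ piece dies by \eqref{eq:intG}, leaving $\rho\int(\nabla_x\rho\cdot\nabla_x\GetaA)\GCI\,du$; substituting \eqref{eq:grad_Geta}, the longitudinal part again vanishes and the transverse part is \eqref{eq:intGw} with $k=1$ and $w=(\nabla_x\rho\cdot\nabla_x)\Omega\perp\Omega$. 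For $I_5$, I would expand $\Delta_x(\rho\GetaA)=(\Delta_x\rho)\GetaA+2\,\nabla_x\rho\cdot\nabla_x\GetaA+\rho\,\Delta_x\GetaA$: the first term vanishes, the second reduces as in $I_3$ (now with a factor $2$), and for the third I would apply \eqref{eq:intLaplacian} componentwise and sum over $x_i$, using $\partial_{x_i}\eta=\eta'(\rho)\partial_{x_i}\rho$ so that $\sum_i\partial_{x_i}\eta\,\partial_{x_i}\Omega=\eta'(\rho)(\nabla_x\rho\cdot\nabla_x)\Omega$; collecting yields both the $(\nabla_x\rho\cdot\nabla_x)\Omega$ and the $\proopp\Delta_x\Omega$ contributions.

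The genuinely laborious term is $I_2$. Setting $\Phi(x,u):=\eta(\rho)\,u\cdot A_\Omega u=\eta(\rho)\big((u\cdot\Omega)^2-\tfrac1n\big)$, I would split $\nabla_x\cdot((\nabla_x\Phi)\rho\GetaA)=(\Delta_x\Phi)\rho\GetaA+(\nabla_x\Phi\cdot\nabla_x\rho)\GetaA+\rho\,\nabla_x\Phi\cdot\nabla_x\GetaA$ and integrate each against $\GCI$. The first piece requires $\Delta_x\Phi$, whose expansion contains a term $2\eta(u\cdot\Omega)(u\cdot\Delta_x\Omega)$ producing the $\proopp\Delta_x\Omega$ contribution via \eqref{eq:intGw} with $k=1$ and $w=\Delta_x\Omega$, several $(u\cdot\Omega)(u\cdot(\nabla_x\rho\cdot\nabla_x)\Omega)$ terms, and a $\sum_i(u\cdot\partial_{x_i}\Omega)^2$ term annihilated by \eqref{eq:Gw2_squared}. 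The third piece, built from the product of the two gradient expressions in Lemma \ref{lem:derivative_GetA}, yields the $c_{3,2}$-dependent terms through \eqref{eq:intGw} with $k=3$ and $k=1$, while its transverse-squared part again vanishes by \eqref{eq:Gw2_squared}. The main obstacle is the bookkeeping: correctly sorting the many terms of the double $x$-divergence into longitudinal (killed by \eqref{eq:intG}), transverse-squared (killed by \eqref{eq:Gw2_squared}), and single-transverse (surviving) classes, and assembling the survivors into the compact stated coefficient, in particular recognizing that $\Delta_x\Omega$ enters only through $\proopp\Delta_x\Omega$ whereas $(\nabla_x\rho\cdot\nabla_x)\Omega$ is already orthogonal to $\Omega$.
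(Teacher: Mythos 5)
Your proposal is correct and follows essentially the same route as the paper's proof: the identical product-rule decomposition of each $I_j$ (in particular the three-way split of $I_2$ into the $\Delta_x\Phi$, $\nabla_x\Phi\cdot\nabla_x\rho$, and $\rho\,\nabla_x\Phi\cdot\nabla_x \GetaA$ pieces), followed by the same term-by-term reduction via Lemma \ref{lem:derivative_GetA} and the model integrals \eqref{eq:intG}--\eqref{eq:intLaplacian} of Lemma \ref{lem:model_integrals}. Your sorting of terms into longitudinal (killed by \eqref{eq:intG}), transverse-squared (killed by \eqref{eq:Gw2_squared}), and single-transverse (surviving via \eqref{eq:intGw}, \eqref{eq:intGpartial}, \eqref{eq:intGw_partial}), together with keeping $P_{\Omega^\perp}\Delta_x\Omega$ explicit while using $\partial\Omega\perp\Omega$ for the other derivatives, matches the paper's computation exactly.
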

\begin{proof}
	 We start with computing $I_1$,
	 \begin{align*}
	 	I_1 &= \ints \p_t (\rho \GetaA) \GCI \d u 
	 	=  \p_t \rho \int_{\S^{n-1}} \GetaA \GCI \d u + \rho \int_{\S^{n-1}} \p_t \GetaA \GCI \d u.
	 \end{align*} 
 The first integral above on the last part of the equality is of the form \eqref{eq:intG} for $k=0$, so it is zero. The second integral is of the form \eqref{eq:intGpartial} for $k=0$ and so $I_1$ becomes
 \begin{align*}
 	I_1= \frac{2 \eta(\rho) c_{1,2} (\rho) }{n-1}\ \rho\p_t \O.
 \end{align*}
Next, we compute $I_2$, 
\begin{align*}
	I_2&= \int_{\S^{n-1}} \nabla_x \cdot  \lp \nabla_x \lp  u\cdot \eta(\rho) A_\O u  \rp \rho \GetaA\rp \GCI \d u\\
	& = \int_{\S^{n-1}} \Delta_x\lp  u\cdot \eta(\rho) A_\O u   \rp \rho \GetaA \GCI \d u +\int_{\S^{n-1}} \nabla_x\lp  u\cdot  \eta(\rho) A_\O u  \rp \cdot \nabla_x \rho \ \GetaA \GCI \d u  \\
& \quad 	+\rho \int_{\S^{n-1}} \nabla_x\lp  u\cdot  \eta(\rho) A_\O u \rp \cdot \nabla_x \GetaA \GCI \d u  =: I_2^1+I_2^2+I_2^3.
\end{align*} Proceeding further, we compute each $I^i_2$, $i\in \{1,2,3\}$ separately.  Using  \eqref{def:A_Om}, we have 
\begin{align*}
	u\cdot \eta(\rho) A_\O u = \eta(\rho) \lp(u\cdot \O)^2 - \frac{1}{n} \rp.
\end{align*}
The Laplacian of this expression reads
\begin{align*}
	\Delta_x (u\cdot\eta(\rho) A_\O u) = &\Delta_x \eta(\rho) \lp (u\cdot \O)^2 -\frac{1}{n} \rp + 4\nabla_x \eta(\rho) \cdot  (u\cdot \O) (u \cdot \nabla_x \O)   \\
	&\quad +2 \eta(\rho) (u\cdot \nabla_x \O)^2 +2 \eta(\rho) (u \cdot \O) (u \cdot \Delta_x \O). 
\end{align*}
Subsequently, using this, we obtain
\begin{align*}
	I_2^1 = &\int_{\S^{n-1}} \Delta_x\lp  u\cdot \eta(\rho) A_\O u  \rp \rho \GetaA \GCI \d u \\
	= &\, \rho  \Delta_x \eta(\rho) \int_{\S^{n-1}}  \lp (u\cdot \O)^2 -\frac{1}{n} \rp \GetaA \GCI \d u  
 \\ &\, \, + 4\eta^\prime(\rho) \rho\, \sum_{i=1}^n\partial_{x_i} \rho \lp \int_{\S^{n-1}}   (u\cdot \O) (u \cdot \partial_{x_i} \O) \GetaA \GCI \d u \rp\\
	&\, \, + 2 \eta(\rho) \rho \int_{\S^{n-1}} (u\cdot \nabla_x \O)^2 \GetaA \GCI \d u  + 2 \eta(\rho)  \rho \int_{\S^{n-1}} (u \cdot \O) (u \cdot \Delta_x \O) \GetaA \GCI \d u. 
\end{align*} We further simplify each integral above. The first and the third integrals are equal to zero as they are of the forms \eqref{eq:intG} for $k=2$ and $k=0$ and \eqref{eq:Gw2_squared} for $k=2$, respectively. The second and the forth integrals are of the form \eqref{eq:intGw} with $k=1$ (and recall \eqref{eq:orthogonalityOmega}). Combining these we obtain
\begin{align*}
	I_2^1= 4\eta^\prime(\rho) \rho \frac{c_{1,2}}{n-1}  (\nabla_x \rho\cdot\nabla_x)\O + 2\eta(\rho) \rho \frac{c_{1,2}}{n-1} \proopp \Delta_x \O.
\end{align*}
Next, for $I_2^2$ we have 
\begin{align*}
	I_2^2 &=\nabla_x \rho\cdot\int_{\S^{n-1}} \nabla_x\lp  u\cdot \eta(\rho) A_\O u  \rp  \ \GetaA \GCI \d u \\
	&=|\nabla_x\rho|^2 \eta^\prime(\rho)\int_{\S^{n-1}} \lp (u\cdot \O)^2-\frac{1}{n} \rp  \ \GetaA \GCI \d u 
 \\&\qquad \qquad \qquad +2\eta(\rho) \sum_{i=1}^n \partial_{x_i}\rho  \lp \int_{\S^{n-1}}  (u\cdot \O)(u\cdot \partial_{x_i} \O)   \ \GetaA \GCI \d u \rp.
\end{align*}
Noticing that the first integral above is of the form \eqref{eq:intG} with $k=2$ and $k=0$ and the second integral is of the form \eqref{eq:intGw} with $k=1$ and recalling \eqref{eq:orthogonalityOmega}, we conclude
\begin{align*}
	I_2^2 = 2\eta(\rho) \frac{c_{1,2}}{n-1} \lp\nabla_x \rho \cdot \nabla_x\rp \O.
\end{align*}
 Similarly, we obtain (here we additionally use \eqref{eq:grad_Geta} for $\nabla_x \GetaA$)
 \begin{align*}
 	I_2^3 &=\rho \int_{\S^{n-1}} \nabla_x\lp u\cdot \eta(\rho) A_\O u  \rp \cdot \nabla_x \GetaA \GCI \d u \\
 	 &= \rho \eta^\prime(\rho) \sum_{i=1}^n\partial_{x_i}\rho \int_{\S^{n-1}} \lp  (u\cdot \O)^2-\frac{1}{n} \rp  \partial_{x_i} \GetaA \GCI \d u 
   \\ &\qquad \qquad \qquad +2 \eta(\rho) \rho \int_{\S^{n-1}}  (u\cdot \O) \lp (u\cdot \nabla_x \O) \cdot \nabla_x \GetaA \rp \GCI \d u.
 \end{align*}
The first integral above is of the form \eqref{eq:intGpartial} for $k=2$ and $k=0$ (for the terms $(u\cdot \Omega)^2$ and $1/n$), respectively; and the second integral is of the form \eqref{eq:intGw_partial} for $k=1$, therefore, we have that
\begin{align*}
	I_2^3=2\rho \eta(\rho) \eta^\prime(\rho) \lp \frac{2c_{3,2}-c_{1,2}/n-d_{2,0}c_{1,2}}{n-1} \rp \lp \nabla_x\rho \cdot \nabla_x\rp \Omega .
\end{align*} 
Combining these we compute $I_3$ analogously to obtain
\begin{align*}
	I_3&= \int_{\S^{n-1}} \nabla_x \cdot  \lp (\nabla_x\rho) \, \rho \GetaA\rp \GCI \d u=  2\rho\eta(\rho)\frac{c_{1,2}}{n-1}(\nabla_x\rho\cdot\nabla_x)\O.
\end{align*}
Finally, for $I_5$ we have 
\begin{align*}
	I_5=&\int_{\S^{n-1}} \Delta_x \lp \rho \GetaA \rp \GCI \d u\\
	= & \Delta_x \rho \int_{\S^{n-1}}  \GetaA  \GCI \d u + 2 \sum_{j=1}^n\partial_{x_d}\rho \int_{\S^{n-1}}  \partial_{x_d}\GetaA  \GCI \d u + \rho \int_{\S^{n-1}} \Delta_x \GetaA  \GCI \d u. 
\end{align*}
The first integral above is of the form \eqref{eq:intG} for $k=0$, so it is zero; the second integral is of the form \eqref{eq:intGpartial} for $k=0$; for the third integral we use \eqref{eq:intLaplacian} and that $\partial \eta =\eta'(\rho) \partial(\rho)$. Hence, we conclude
\begin{align*}
	I_5 = \frac{2 c_{1,2}}{n-1}\eta(\rho) \, \rho P_{\Omega^\perp}\Delta_x\O+ \frac{4}{n-1}\lp \rho \eta'[c_{1,2}+\eta(c_{3,2}-d_{2,0}c_{1,2})]+\eta(\rho)c_{1,2}\rp(\nabla_x\rho\cdot\nabla_x)\O. 
\end{align*}
This completes the proof. 
\end{proof}

\subsubsection{Limit of the term with \texorpdfstring{$B_f$}{B\_f}}
\label{sec:termBf}

In this section, we compute the remaining integral $I_4$ in Equation \eqref{eq:Is}, which includes the term $B_f$. First, we prove the following lemma:

\begin{lem} \label{lem:Bf_GCI}
	Let $B_f$ be given by \eqref{eq:B_f} with $f=\rho \GetaA$.  Then the following holds
	\begin{align*}
	\int_{\S^{n-1}} \nabla_u\cdot (\nabla_u B_{\rho \GetaA}\ \rho G_{\eta A_\Omega}) \ h_\eta(u\cdot \Omega) P_{\Omega^\perp} u \d u =\rho \int_{\S^{n-1}} B_{\rho \GetaA}(u) \, (\udoto) \proopp u \ \ga \d u.
	\end{align*}
\end{lem}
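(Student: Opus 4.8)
The plan is to derive the identity by two integrations by parts on the sphere, the second of which invokes the elliptic equation that defines the generalized collision invariant. Throughout, I would write $G := \GetaA = \ga$ (these coincide since $\eta = \eta(\rho)$), and denote by $\psi_1,\dots,\psi_n$ the scalar components of the vector field $\GCI = h_\eta(u\cdot\Omega)\proopp u$. Because the asserted identity is $\R^n$-valued, I would prove it componentwise; the factors $\rho$, $\Omega$ and $\eta(\rho)$ depend only on $(t,x)$ and hence act as constants inside the $u$-integral.

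First, for each fixed $k$, I would apply the divergence theorem on the boundaryless manifold $\S^{n-1}$ to the scalar $\psi_k$ and the tangent vector field $\nabla_u B\,\rho G$. Using $\int_{\S^{n-1}}\nabla_u\cdot X\,\d u = 0$ for any tangent field $X$ together with the product rule, this gives
\[
\int_{\S^{n-1}} \nabla_u\cdot(\nabla_u B\,\rho G)\,\psi_k \,\d u = -\rho\int_{\S^{n-1}} G\,\nabla_u B\cdot\nabla_u\psi_k\,\d u.
\]
I would then integrate by parts a second time, moving the gradient off $B$ and treating $G\,\nabla_u\psi_k$ as the tangent field, to obtain
\[
-\rho\int_{\S^{n-1}} G\,\nabla_u B\cdot\nabla_u\psi_k\,\d u = \rho\int_{\S^{n-1}} B\,\nabla_u\cdot(G\,\nabla_u\psi_k)\,\d u.
\]

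The decisive step is the identification of $\GCI$ with the generalized collision invariant of Definition \ref{def:vec_GCI} (taking $\Lambda = A_\Omega$, so that $\Omega_\Lambda = \Omega$): this is precisely the content of Proposition \ref{prop:GCI_property}, and it tells us that the components of $\GCI$ solve, for each $k$,
\[
\nabla_u\cdot(G\,\nabla_u\psi_k) = (u\cdot\Omega)\,(\proopp u)_k\, G.
\]
Substituting this into the previous display and reassembling the $n$ components into a vector yields
\[
\int_{\S^{n-1}} \nabla_u\cdot(\nabla_u B\,\rho G)\,\GCI \,\d u = \rho\int_{\S^{n-1}} B\,(u\cdot\Omega)\,\proopp u\, G\,\d u,
\]
which is exactly the claimed identity.

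The computation is short; the only points demanding attention are that neither integration by parts produces a boundary term (automatic on the closed manifold $\S^{n-1}$) and that $B = B_{\rho G}$ and the GCI are smooth enough for these manipulations — both granted by Assumption \ref{as:A} together with the regularity theory for the GCI recalled from \cite{DFL22}. I expect the main obstacle to be purely one of bookkeeping: correctly matching the explicit form $\GCI = h_\eta(u\cdot\Omega)\proopp u$ from Proposition \ref{prop:GCI_property} with the weak solution $\psi_{\eta\Lambda}$ of the elliptic equation in Definition \ref{def:vec_GCI}, since it is that equation which closes the argument in the final step.
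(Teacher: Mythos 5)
Your proposal is correct and follows essentially the same route as the paper's proof: two integrations by parts on the closed manifold $\S^{n-1}$ (so no boundary terms), followed by substituting the defining elliptic equation $\nabla_u\cdot(\ga\,\nabla_u\boldsymbol{\psi}) = (u\cdot\Omega)\,\proopp u\,\ga$ from Definition \ref{def:vec_GCI} for the generalized collision invariant. Your componentwise bookkeeping and the explicit identification of $h_\eta(u\cdot\Omega)\proopp u$ with the weak solution of Definition \ref{def:vec_GCI} (via Proposition \ref{prop:GCI_property}) only make explicit what the paper's proof leaves implicit.
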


\begin{proof} Applying integration by parts twice, we rewrite the integral as 
	\begin{align*}
		\ints \nabla_u\cdot (\nabla_u B_{\rho \GetaA} & \rho G_{\eta A_\Omega})  h_\eta(u\cdot \Omega) P_{\Omega^\perp} u \d u 
		\\&=  - \ints (\nabla_u B_{\rho \GetaA}\ \rho G_{\eta A_\Omega})  \cdot  \nabla_u \left [ h_\eta(u\cdot \Omega) P_{\Omega^\perp} u \right ] \d u\\
		&= \rho\ints  B_{\rho \GetaA} \nabla_u \cdot \left ( G_{\eta A_\Omega}  \nabla_u[h_\eta(u\cdot \Omega) P_{\Omega^\perp} u] \right ) \d u.
	\end{align*} We conclude the result since the GCI $\psi =h_\eta(u\cdot \Omega) P_{\Omega^\perp} u $ satisfies
\begin{align*}
	\nabla_u \cdot (\ga \nabla_u \psi) = (\udoto)\ \proopp u \, \ga
\end{align*} by Definition \ref{def:vec_GCI}.
\end{proof}
Now, we go back to Equation \eqref{eq:Is} to rewrite $I_4$ using Lemma \ref{lem:Bf_GCI},
\begin{align} 	\label{eq:B_alternative}
	I_4 =\rho \int_{\S^{n-1}} B_{\rho \GetaA}(u) 	(\udoto) \proopp u  \ga \d u.
\end{align}
First,  we recast $B_{\rho \GetaA}=[B]+[B]_{\text{even}}$, as stated in Equation \eqref{eq:B_f}, by expanding
\begin{align*}
\Sigma(u, u_2)= (\ell^2-d^2)(u\otimes u) + (\ell^2-d^2) (u_2\otimes u_2) + 2 d^2 \Id 
\end{align*} and the factor $(1- \chi^2(u\cdot u_2)^2)$. Hence, we obtain 
\begin{align*}
	[B] &:= \frac{(\ell^2-d^2)}{4} ((u\otimes u) : D_x^2) \rho - \frac{\chi^2}{4}(\ell^2-d^2) \int_{\S^{n-1}} (u\cdot u_2)^2  ((u\otimes u): D_x^2)(\rho \GetaA (u_2)) \d u_2\\
	&\quad - \frac{\chi^2}{4}(\ell^2-d^2)\int_{\S^{n-1}} (u\cdot u_2)^2  \ ((u_2\otimes u_2): D_x^2)(\rho \GetaA (u_2)) \d u_2 
 \\ &\quad -\frac{\chi^2}{2}d^2\int_{\S^{n-1}} (u\cdot u_2)^2 \Delta_x(\rho \GetaA(u_2)) \d u_2.
\end{align*}
and 
\begin{align*}
	 [B]_{\text{even}} := \frac{(\ell^2-d^2)}{4} \int_{\S^{n-1}} \lp (u_2\otimes u_2): D^2_x\rp (\rho \GetaA (u_2)) \d u_2 +\frac{d^2}{2}\Delta_x \rho.
\end{align*}
Now, using the change of variables \eqref{eq:change_of_variables} in \eqref{eq:B_alternative}, we have that 
\begin{align*}
	I_4 =\rho\int_{\S^{n-2}}  v \int_0^\pi B_{\rho \GetaA}(u(\theta)) \, \cos\theta \sin\theta  \ \Getatheta\ \sin^{d-2}\theta  \d\theta \d v.
\end{align*}
Notice that the terms in $[B]_{\text{even}}$ are independent of $u$, so they have zero contribution to $I_4$ (since they give an integrand odd in $v$).
Now, we rewrite $I_4$ using $[B]$ and the tensor product:
\begin{multline*}
	 I_4 =\rho \frac{(\ell^2-d^2)}{4} [(H^r_2 : D^2_x)]_{[2,3:1,2]}\rho  - \rho \frac{\chi^2}{4}(\ell^2-d^2) [H^r_4: (D^2_x \otimes (\rho H_2)]_{[2,3,4,5:1,2,3,4]}  \\
	- \rho \frac{\chi^2}{4}(\ell^2-d^2) [H^r_2\otimes D^2_x : (\rho H_4)]_{[2,3,4,5:1,2,3,4]} - \rho \frac{\chi^2}{2}d^2 [H^r_2 : \Delta_x (\rho H_2)]_{[2,3:1,2]}, 
\end{multline*} where in the second term above, the $s$-th component  of the contraction is defined as
\begin{align}
\lp [H^r_4: (D_x \otimes (\rho H_2)]_{[2,3,4,5:1,2,3,4]} \rp _s &=\sum_{i,j,k,p} (H^r_4)_{sijkp}  \partial_{x_i}\partial_{x_j} \lp \rho(H_2)_{kp} \rp
\end{align}
and analogously for the other contractions; and where
\begin{align}
	H_2 &= H_2(\eta,\Omega,\Omega^\perp) = \int_{\S^{n-1}} (u\otimes u) \ga  \d u, \label{eq:H_2}\\
	H_2^r&=H^r_2(\eta,\Omega,\Omega^\perp) = \int_{\S^{n-1}} P_{\Omega^\perp} u \otimes (u\otimes u)  (u\cdot \Omega) \ga \d u, \label{eq:H_2r}\\
	H_4 &= H_4(\eta,\Omega,\Omega^\perp) = \int_{\S^{n-1}} (u\otimes u\otimes u \otimes u) \ga \d u, \label{eq:H_4}\\
	H_4^r&=H^r_4(\eta,\Omega,\Omega^\perp) = \int_{\S^{n-1}} P_{\Omega^\perp} u \otimes (u\otimes u\otimes u \otimes u)  (u\cdot \Omega) \ga \d u. \label{eq:H_4r}
\end{align}

We can now rewrite these terms in such a way that they only depend on $\Omega$, $\Omega^\perp$ and $\eta$.

\begin{lem}[Dimension $n=2$] In case of only $n=2$, we have that $H_2, H_2^r, H_4, H_4^r$ correspond to expressions \eqref{eq:H2d2}, \eqref{eq:H2rd2}, \eqref{eq:H4d2} and \eqref{eq:H4rd2}, respectively.    
\end{lem}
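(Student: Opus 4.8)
The plan is to reduce each of the four tensor integrals \eqref{eq:H_2}--\eqref{eq:H_4r} to one–dimensional integrals in $\theta$ via the change of variables \eqref{eq:change_of_variables}, writing $u=\cos\theta\,\Omega+\sin\theta\,v$ with $\theta\in(0,\pi)$ and $v\in\S^{n-2}$. The two facts that make this work are that $u\cdot\Omega=\cos\theta$ and $\proopp u=\sin\theta\,v$, so every integrand becomes a sum of monomials $\cos^k\theta\,\sin^{p}\theta$ times tensor products of $\Omega$ and $v$, and that the Gibbs weight reduces to $g_{\eta(\rho)}(\theta)$ (see \eqref{lem:var_trafo_G_h}), which does not depend on $v$. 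I would then integrate in $v$ first, invoking Lemma \ref{lem:v_integrals}: odd powers of $v$ vanish by \eqref{eq:v_odd}, while \eqref{eq:v_squared} and \eqref{eq:v_4tensor} dispatch the even powers. Specialising to $n=2$, where $\proopp=\Omega^\perp\otimes\Omega^\perp$, these become $\int_{\S^{0}}v\otimes v\,\d v=\Omega^\perp\otimes\Omega^\perp$ and, since $\tfrac{1}{(n-1)(n+1)}=\tfrac13$ and $\Gamma=3(\Omega^\perp)^{\otimes4}$, also $\int_{\S^{0}}v^{\otimes4}\,\d v=\Omega^\perp\otimes\Omega^\perp\otimes\Omega^\perp\otimes\Omega^\perp$. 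After the $v$–integration the surviving $\theta$–integrals are precisely the coefficients $d_{k,p}$ of \eqref{eq:def_dkl}.

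For $H_2$ I would expand $u\otimes u=\cos^2\theta\,\Omega\otimes\Omega+\cos\theta\sin\theta(\Omega\otimes v+v\otimes\Omega)+\sin^2\theta\,v\otimes v$; the cross term is odd in $v$ and drops, leaving $d_{2,0}\,\Omega\otimes\Omega+d_{0,2}\,\Omega^\perp\otimes\Omega^\perp$, which is \eqref{eq:H2d2}. For $H_2^r$ the extra factor $\proopp u\,(u\cdot\Omega)=\sin\theta\cos\theta\,v$ raises the total $v$–degree by one, so the surviving piece is the $v^2$–part of $u\otimes u$; replacing $v\otimes v$ by $\Omega^\perp\otimes\Omega^\perp$ yields $d_{2,2}\big(\Omega^\perp\otimes\Omega^\perp\otimes\Omega+\Omega^\perp\otimes\Omega\otimes\Omega^\perp\big)$, exactly \eqref{eq:H2rd2}. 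The same parity principle governs the quartic cases.

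For $H_4$ only even powers of $v$ survive, producing three groups: the $v^0$ term $d_{4,0}\,\Omega^{\otimes4}$, the $v^4$ term $d_{0,4}\,(\Omega^\perp)^{\otimes4}$, and the $v^2$ term collecting all $\binom{4}{2}=6$ placements of two $v$'s (hence two $\Omega^\perp$'s) among four slots with common coefficient $d_{2,2}$; these six tensors are exactly the terms of $S_{2\Omega,2\Omega^\perp}$ in \eqref{eq:S22}, recovering \eqref{eq:H4d2}. For $H_4^r$ the leading factor $\proopp u\,(u\cdot\Omega)$ inserts one $v$ in the first slot together with the weight $\cos\theta\sin\theta$, so the surviving contributions come from $u^{\otimes4}$ carrying an \emph{odd} number of $v$'s: the one-$v$ arrangements (three $\Omega$'s) pair with the leading $v$ to give, after $v\otimes v\mapsto\Omega^\perp\otimes\Omega^\perp$, the four tensors $\Omega^\perp\otimes S_{3\Omega,\Omega^\perp}$ with coefficient $d_{4,2}$, while the three-$v$ arrangements give $\Omega^\perp\otimes S_{\Omega,3\Omega^\perp}$ with coefficient $d_{2,4}$; this is \eqref{eq:H4rd2}.

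The computation is elementary once the change of variables is in place; the only genuinely delicate point is the combinatorial bookkeeping in the quartic cases, namely verifying that the placements of $\Omega^\perp$–factors produced by expanding $u^{\otimes4}$ (and $\proopp u\otimes u^{\otimes4}$) coincide term by term with the symmetric tensors $S_{2\Omega,2\Omega^\perp}$, $S_{3\Omega,\Omega^\perp}$ and $S_{\Omega,3\Omega^\perp}$ of \eqref{eq:S22}--\eqref{eq:S31}. I would carry this out by listing the index positions of the perpendicular factors explicitly and checking that each target tensor enumerates exactly those positions, observing that all surviving tensors contain an even number of $\Omega^\perp$ factors, so the result is independent of the sign ambiguity in $\Omega^\perp$, as it must be.
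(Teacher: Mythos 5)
Your proof is correct and follows essentially the same route the paper indicates for this lemma: the change of variables \eqref{eq:change_of_variables}, parity in $v$ handled via Lemma \ref{lem:v_integrals}, and the same type of bookkeeping as in Lemma \ref{lem:model_integrals} --- the paper skips these details, which you supply. In particular, your $n=2$ specializations $\int_{\S^{0}} v\otimes v \,\d v = \Omega^\perp\otimes\Omega^\perp$ and $\tfrac{1}{3}\Gamma = (\Omega^\perp)^{\otimes 4}$, and the identification of the $\binom{4}{2}$ two-$v$ placements and the single-placement terms with $S_{2\Omega,2\Omega^\perp}$, $S_{3\Omega,\Omega^\perp}$ and $S_{\Omega,3\Omega^\perp}$, are all consistent with \eqref{eq:H2d2}--\eqref{eq:H4rd2}.
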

The proof of this lemma is based on the change of variables \eqref{eq:change_of_variables} for $n=2$, i.e., $u=\cos\theta \Omega + \sin\theta\Omega^\perp$, and the same type of argument as in the proof of Lemma \ref{lem:model_integrals}. Thus, we skip it here.

\medskip
We conclude this section by computing the values of $H_2, H_2^r, H_4, H_4^r$ in dimension $n\geq 3$:
\begin{proof}[Proof of Proposition \ref{lem:Hdimension3}]
    For this proof, we consider the decomposition of $u\in \mathbb{S}^{n-1}$ into 
    \begin{align*}
    	u= \upar + \upp, \qquad \upar:= P_{\O} u=(u\cdot\Omega) \Omega, \quad \upp:= P_{\O^\perp}u,
    \end{align*} and the change of variables given in \eqref{eq:change_of_variables} where we have that
  $$(u\cdot\Omega)= \cos\theta, \qquad P_{\O^\perp}u=\sin\theta v.$$
  
Then, the proof of this lemma closely  the proof of Lemma 4.1 in \cite{DM-A20}. There, the authors prove that,
    \begin{align} \label{eq:Aintegral}
         \mathcal{A}_{ij}:=\ints \phi(u\cdot \O) (\upp)_i(\upp)_j \d u = \frac{1}{n-1}\lp \ints \phi(u\cdot \O) (1-(u\cdot\O)^2) \d u \rp (\pp)_{ij} 
    \end{align}
     where $\phi=\phi(u\cdot \O)$ is a given function. The authors also consider
    \begin{align*}
    S_{ijkl} &:= \ints \phi(u\cdot \Omega) (\upp)_i (\upp)_j (\upp)_k (\upp)_l \, \d u.
    \end{align*}and we additionally define
    \begin{align}
       \label{eq:CS} 
       C_S:= \frac{1}{(n-1)(n+1)}\int_{\mathbb{S}^{n-1}} \phi(u\cdot \O) (1-(u\cdot \O)^2)^2 \d u.
    \end{align}
   Notice that the only non-zero terms correspond to
    \begin{align} \label{eq:Sdef}
        S_{iiii} = 3C_S\Gamma_{iiii}, \quad
        S_{iijj} =C_S \Gamma_{iijj}, \quad
        S_{ijij} = C_S\Gamma_{ijij}, \quad
        S_{ijji}  =C_S\Gamma_{ijji},
    \end{align}
    where $\Gamma$ is given in \eqref{eq:Gamma} and $i,j \in \{1,...,n\}$ such that $i \neq j$. With this we are ready to carry out the proof.

The equality for $H_2$ in \eqref{eq:H2inlemma} is obtained by proceeding similarly as in the proof of Lemma \ref{lem:model_integrals} and using \eqref{eq:v_squared}.

\medskip
Next, we look at $H_2^r$ to prove \eqref{eq:H2rinlemma}. We have that
\begin{align*}
    H^r_2 &=  \ints \upp \otimes (\upp \otimes \upar) (u\cdot \Omega) \GetaA \d u + \ints \upp \otimes (\upar \otimes \upp) (u\cdot \Omega) \GetaA \d u\\
    &=  \frac{d_{2,2}}{n-1} P_{\Omega^\perp} \O + R^r_2.
\end{align*}
with $d_{2,2}$ given in Equation \eqref{eq:def_dkl}.

In the first equality above, the integrands that are odd in $\upp$ vanish (analogously to what happens when performing the change of variables \eqref{eq:change_of_variables}: the terms that are odd in $v$ vanish). In the second equality, for the first integral we used \eqref{eq:v_squared} and for the second integral we define 
\begin{align*}
(R^r_2)_{ijk} &= \ints (\upp)_i (\upar)_j (\upp)_k (u\cdot\O) \GetaA \d u = \Omega_j\int (\upp)_i(\upp)_k(u\cdot\Omega)^2 \GetaA \d u \\
&= \Omega_j \mathcal{A}_{ik} = \Omega_j \frac{d_{2,2}}{n-1} (\pp)_{ik},
\end{align*}
where in the definition of $\mathcal{A}$ we used $\phi(u\cdot \Omega) = (u\cdot\Omega)^2 \GetaA$.

Now consider
\begin{align}
\lp [\pp \otimes \O \otimes \pp]_{:24} \rp_{ijk} &= \sum_{p=1}^n (\pp)_{ip}\O_j (\pp)_{pk} = \O_j (\pp)_{ik},
\end{align}
where the second equality is a straightforward computation. Therefore, we conclude that
$$R^r_2= \frac{d_{2,2}}{n-1}[\pp \otimes \O \otimes \pp]_{:24}.$$

We proceed by focusing on $H_4$ and expanding it:
\begin{align*}
    H_4 &= \ints (\upp \otimes \upp\otimes \upp\otimes \upp) \GetaA \d u + \ints (\upp \otimes \upp\otimes \upar \otimes \upar) \GetaA \d u\\
    &+ \ints (\upar \otimes \upar\otimes \upp \otimes \upp) \GetaA \d u + \ints (\upar \otimes \upp\otimes \upp \otimes \upar) \GetaA \d u\\
    &+ \ints (\upp \otimes \upar\otimes \upp \otimes \upar) \GetaA \d u + \ints (\upp \otimes \upar\otimes \upar \otimes \upp )\GetaA \d u\\
   &+ \ints (\upar \otimes \upp\otimes \upar \otimes \upp) \GetaA \d u + \ints (\upar \otimes \upar \otimes \upar \otimes \upar) \GetaA \d u\\
   &=: D_1+D_2+D_3+\hdots+D_8,
\end{align*}
where all the terms odd in $\upp$ vanished.
Considering the change of variables \eqref{eq:change_of_variables}, we deduce from \eqref{eq:v_4tensor} that
$$D_1= d_{0,4}\frac{1}{(n-1)(n+1)}\Gamma.$$
From \eqref{eq:v_squared}, \eqref{eq:Aintegral}  and proceeding as before,  we have that
$$D_2= \frac{d_{2,2}}{n-1} \pp \otimes \Omega\otimes \O, \qquad D_3= \frac{d_{2,2}}{n-1} \O \otimes \O \otimes \pp, \qquad D_4 = \frac{d_{2,2}}{n-1} \O \otimes \pp \otimes\O.$$
Furthermore, proceeding as in the computation for $R^r_2$ we obtain
$$D_5 = \frac{d_{2,2}}{n-1}[\pp\otimes\O\otimes\pp\otimes\O]_{:24}, \qquad D_6 = \frac{d_{2,2}}{n-1}[\pp \otimes\O\otimes\O\otimes\pp]_{:25},$$
$$D_7= \frac{d_{2,2}}{n-1}[\O\otimes\pp\otimes\O\otimes\pp]_{:35}.$$
By applying the change of variables \eqref{eq:change_of_variables} to the last term we directly have that
$$D_8= d_{4,0}\, \O\otimes\O\otimes\O\otimes\O$$

Finally, we split $H_4^r$ into
\begin{align*}
    H_4^r = D_9 + D_{10},
\end{align*}
with
\begin{align*}
    D_9 = &\ints \lp \upp\otimes (\upp \otimes \upp\otimes \upp\otimes \upar) (u\cdot \O) + \upp\otimes (\upp \otimes \upp\otimes \upar\otimes \upp) (u\cdot \O) \rp \GetaA \d u\\
  + &\ints \lp \upp\otimes (\upp \otimes \upar\otimes \upp\otimes \upp) (u\cdot \O) + \upp\otimes (\upar \otimes \upp\otimes \upp\otimes \upp) (u\cdot \O) \rp \GetaA \d u,
  \end{align*} and
  \begin{align*}
  D_{10} = &\ints \lp \upp\otimes (\upp \otimes \upar\otimes \upar\otimes \upar) (u\cdot \O) + \upp\otimes (\upar \otimes \upp\otimes \upar\otimes \upar) (u\cdot \O) \rp \GetaA \d u\\
  + &\ints \lp \upp\otimes (\upar \otimes \upar\otimes \upp\otimes \upar) (u\cdot \O) + \upp\otimes (\upar \otimes \upar\otimes \upar\otimes \upp) (u\cdot \O) \rp \GetaA \d u,
\end{align*}
where the terms odd in $\upp$ vanished. Proceeding as in the computation of $R^r_2$ we have that
\begin{multline*}
    D_{10} = \frac{d_{4,2}}{n-1}\Big(\pp\otimes \O \otimes\O\otimes\O
    + [\pp \otimes \O\otimes\pp\otimes\O\otimes \O]_{:24}\\
    \qquad+ [\pp\otimes\O\otimes\O\otimes\pp\otimes\O]_{:25} + [\pp \otimes \O\otimes\O\otimes\O\otimes\pp]_{:26}\Big).
\end{multline*}
To compute $D_{9}$, we consider the second component in the first integral as an example. This integral is a 5-tensor. We apply $\upar=(u\cdot \O)\O$ and look at the components $i,j,k,l,m$
\begin{multline*}
    \lp \ints \upp\otimes (\upp \otimes \upp\otimes \O\otimes \upp) (u\cdot \O)^2\GetaA \d u  \rp_{ijklm}\\
     = \ints (\upp)_i(\upp)_j(\upp)_k\O_l (\upp)_m (u\cdot\O)^2\GetaA \d u =\O_l S_{ijkm},
\end{multline*}
where in the definition of $S_{ijkm}$ we have  $\phi(u\cdot\O)= (u\cdot \O)^2\GetaA$, which in this case means that $C_S = d_{2,4}$.
Putting all the terms of $D_{9}$ together and by considering all the combinations of indices we have that
$$D_{9}= \frac{d_{2,4}}{(n-1)(n+1)} T,$$
with $T$ given in \eqref{eq:defT}. This completes the proof. 
\end{proof}

 \section{Conclusions and open questions}
 \label{sec:conclusions}
 
 In this article, we have explored the effects of an anisotropic repulsive potential on inert particles. We derived both kinetic \eqref{eq:dimless_kinetic_eq} and macroscopic equations \eqref{eq:macro_rho}-\eqref{eq:macro_Omega} and discussed potential interpretations of the latter. The equation for the particle density $\rho$ is independent of the particles' mean-nematic direction $\Omega$. However, the anisotropy of the particles slows down the non-linear diffusion of the particle density $\rho$. In contrast, the equation for $\Omega$ is more complex and challenging to interpret. It consists of transport and diffusion terms, resulting from the effect of the repulsive potential on the particles' position. Additionally, the repulsive potential acting on the particles' direction creates a complex, diffusive-type operator with different signs for oblate and prolate particles.

 Many models for collective dynamics impose an alignment force directly on particle directions. Our main goal in this article was to observe the nematic alignment and the spatial effects of the anisotropic Gaussian-type repulsive potential on the particle dynamics without imposing the alignment directly. 
 
 However, when we derived this effect directly from an anisotropic repulsive potential, we observed intriguing phenomena. For instance, we can understand how particle anisotropy influences the evolution of particle density, how particle positions affect their directions, and what the impact of the specific interaction potential considered on the directions is (through the term $B_f$).

 A numerical study was carried out to test some of the analytical results obtained, namely: (i) the global alignment breaks down in the regime where the macroscopic equation becomes ill-posed, as characterized by the well-posedness condition \eqref{eq:intro_condition_wellposedness}; (ii)
 particles close to be spherical or low particle densities result in slower rates of alignment or a complete lack thereof; (iii) lastly, we also confirmed numerically that the Berne-Pechukas potential does not lead to global alignment at the particle level.

 The well-posedness of the continuum equations will be the object of future studies, along with the consideration of other types of anisotropic repulsive potentials, such as generalizations of the Lennard-Jones potential. Apart from this, there are three outstanding open problems left out in this work, namely, the well-posedness of the kinetic equation, showing that Assumption \ref{as:A} holds, and obtaining an explicit lower bound for the operator $K$ \eqref{eq:Ceta_coeff_porousmedium}. Another interesting aspect for future research is the study of compactly supported potentials, which may better represent contact interactions.

\section*{Acknowledgements}

The authors would like to thank Pierre Degond for wonderful and fruitful discussions.

\medskip

\noindent The work of SMA and CW was funded in part by the Austrian Science Fund (FWF) project \href{https://doi.org/10.55776/F65}{10.55776/F65} and in part by the Vienna Science  and  Technology  Fund  (WWTF)  [10.47379/VRG17014].
CW was funded partly by the Austrian Science Fund (FWF) {W1261-B28. SP is supported by Japan Society for the Promotion of Science (JSPS) KAKENHI (grant number JP24K16962). HY is supported by the Dutch Research Council (NWO) under the NWO-Talent Programme Veni ENW project MetaMathBio with the project number VI.Veni.222.288. This research was completed while HY was visiting the Okinawa Institute of Science and Technology (OIST) through the Theoretical Sciences Visiting Program (TSVP). SP gratefully acknowledges the hospitality of the TSVP during his stay at OIST. 

\medskip 
\noindent For the purpose of open access, the authors have
applied a Creative Commons Attribution (CC-BY) licence to any Author Accepted Manuscript version
arising from this submission. 

\appendix

\section{Appendices}

\subsection{Proof of Lemma \ref{lem:rescaling the potential}}
\label{appendix_lem rescaling the potential}

\begin{proof}
	Considering the scaling $\ell = \eps \ell'$ and $d = \eps d'$ and a change of variable  $z = \frac{x_2-x_1}{\eps} $, so $\eps^{n} \d z = \d x_2$, we obtain
	\begin{align*}
		V_f (t, x_1, u_1) &= \ints \intx  V_{b_\text{WG}} \left ( u_1, u_2, \frac{x_2-x_1}{\eps} \right ) f^\eps(t, x_2, u_2) \d x_2 \d u_2 \\
		&= \eps^{n} \ints \intx  V_{b_\text{WG}} \left ( u_1, u_2, z \right ) f^\eps(t, x_1 + \eps z, u_2) \d z \d u_2.
	\end{align*} 
	Now, we perform a Taylor expansion of $f^\eps(t,\cdot,u_2)$ at $x_1$ to obtain 
	\begin{align*}
		f^\eps(t, x_1 + \eps z, u_2) = f^\eps(t, x_1,  u_2) + \eps z\cdot \nabla_x f^\eps(t, x_1, u_2)  + \frac{1}{2}  \eps^{2} z^T\nabla_x^2 f^\eps(t, x_1, u_2)\cdot z  + \mathcal O (\eps^{3}).
	\end{align*}
	Since, we consider the weighted Gaussian potential $V_{b_{\text{WG}}}$, as defined in Equation \eqref{eq:weighted_Gaussian_potential}, we use \linebreak $\intx z V_{b_{\text{WG}}}(u_1, u_2, z) \d z =0$, and thus for the potential $V_f(t, x_1,u_1)$, we have that 
	\begin{align*}
		V_f (t, x_1, u_1) =  &\, \eps^{n} \ints   \left ( \intx V_{b_{\text{WG}}} \left ( u_1, u_2, z \right ) \d z \right ) f^\eps(t,x_1,  u_2) \d u_2 \\
		& +\frac{\eps^{n+2}}{2} \ints \intx  V_{b_{\text{WG}}}( u_1, u_2,z) z^T \nabla^2_x f^\eps(t, x_1, u_2)\cdot z \d z \d u_2 + \mc O(\eps^{n+3}). 
	\end{align*} 
Notice that with the scaling factor \eqref{eq:scaling_factor_WG} we have for the potential $V_{b_\text{WG}}$,
\begin{align} \label{eq:prop_b_WG}
	\begin{split}
		\intx V_{b_{\text{WG}}}(u_1, u_2,z) \d z &= b_{\text{WG}}^{2}(u_1, u_2), \\
		\intx (z\otimes z) V_{b_{\text{WG}}}(u_1,u_2,z) \d z &= \frac{1}{2}b_{\text{WG}}^{2}(u_1, u_2)\Sigma.
	\end{split}
\end{align}
Using \eqref{eq:prop_b_WG}, we obtain
	\begin{align*}
		V_f (t,x_1, u_1) = & \, \eps^n \ints \left ( 1- \chi^2 (u_1 \cdot u_2)^2\right )f(t,x_1, u_2) \d u_2 \\
		 &+ \frac{\eps^{n+2}}{4} \ints\left ( 1- \chi^2 (u_1 \cdot u_2)^2\right ) \Sigma (u_1,u_2): \nabla^2_x f(t,x_1, u_2) \d u_2 + \mc O (\eps^{n+3}). 
	\end{align*}   
Then,
\begin{align*}
	V_f^\eps (t,x_1, u_1) = \frac{1}{\eps^n} V_f (t,x_1, u_1) = W_f (t,x_1, u_1) + \eps^2 B_f (t,x_1, u_1) +\mc O (\eps^3).
\end{align*} where $W_f$ and $B_f$ are Equations \eqref{eq:W_f} and \eqref{eq:B_f} in the lemma.
\end{proof} 

\subsection{Proof of Lemma \ref{lem:model_integrals}}
\label{appendix_lem_model_integrals}

\begin{proof}

We will consider the change of variables given in \eqref{eq:change_of_variables} throughout the following proof and thus we can recast
\begin{align*}
    \GCI &= h_\eta(\cos\theta) \sin\theta \, v,\\
    \GetaA(u) &=\Getatheta,
\end{align*}
where $\Getatheta$ is given in \eqref{lem:var_trafo_G_h}.

\medskip
To begin with, doing the change of variables \eqref{eq:change_of_variables}, the integral \eqref{eq:intG} corresponds to
\begin{align*}
    \ints (u\cdot \Omega)^k \, \GetaA \GCI\, \d u & =c_{k,1} \lp\int_{S^{n-2}} v \, \d v \rp.
\end{align*}
The integral in $v$ is zero since the integrand is odd. Moreover, the function $c_{k,p}$ is defined in \eqref{eq:def_ckl}.

Proceeding similarly, we consider \eqref{eq:v_squared} and transform \eqref{eq:intGw} into  
\begin{align*}
    \ints (u\cdot w)(u\cdot \Omega)^k \, \GetaA \GCI \, \d u &= c_{k,2} \lp \int_{\S^{n-2}}v\otimes v\, \d v\rp w= 
	\frac{c_{k,2}}{n-1}P_{\Omega^\perp} w.
\end{align*}

Now, the integral \eqref{eq:intGpartial} is computed analogously, since we know the value of $\partial\GetaA$ from Lemma \ref{lem:derivative_GetA} and using that
$P_{\Omega^\perp}\partial \Omega=\partial\Omega$.

Integral \eqref{eq:intGw_partial} is also computed analogously using relation \eqref{eq:v_odd}. 

Moreover, considering \eqref{eq:v_odd} we recast integral \eqref{eq:Gw2_squared} as
\begin{align*}
    \ints (u\cdot\Omega)^{2k} (u\cdot w)^2  \, \GetaA \GCI \, \d u = 2(\Omega\cdot w) c_{2k+1,1} \lp\int_{S^{n-2}}v\otimes v \, \d v\rp w,
\end{align*}
and since $w\perp \Omega$ the integral vanishes.

Finally, the integral \eqref{eq:intLaplacian} is computed in the same way as above using the corresponding expression for $\partial^2\GetaA$ provided in Lemma \ref{lem:derivative_GetA}.
\end{proof}
\subsection{Numerical approximation of \texorpdfstring{$K(\eta)$}{K(n)} and additional figures}

In the following, we outline the method we use to approximate the diffusion parameter $K(\eta)$ in Figures \ref{fig:K_bounds} and \ref{fig:K_bounds_2}. One can approximate $S_2(\eta)$ via applying numerical quadrature such as the adaptive Gauss–Kronrod quadrature to \eqref{eq:def:S_2_theta}, we refer to this approximation as $\tilde{S_2}$. Next, the function $\eta(\rho)$ can be interpolated by computing $\tilde S_2(\eta_j)$ at equidistant points $\eta_j = \frac{C (1 -\chi^2) j}{m}$ for $1 \leq j \leq m$ where $m$ is the number of interpolation points and $C > 0$ is a large enough constant. Using these points, one can define $\tilde{\eta}$ as the Akima interpolation of $ \lp \frac{\eta_j}{\alpha \tilde S_2(\eta_j)}, \eta_j \rp $. The regularization effect of the Akima interpolation is useful to counter numerical instabilities for small values of $\eta$ close to the unknown $\eta^*$. 
Finally, to approximate $K$, we evaluate
\begin{align*}
    \tilde{K}(\rho) \coloneqq 1 - \frac{\chi^2}{n} - \sigma \frac{n-1}{n} \tilde{S_2}(\tilde{\eta}(\rho)) \tilde{\eta}'(\rho)
\end{align*} 
where the derivative $\tilde{\eta}'$ is the derivative of the Akima interpolant.

We finish the appendices with some additional figures from our numerical experiments.

\begin{figure}[ht!] 
\centering
\begin{subfigure}[b]{0.32\textwidth}
         \centering
         \includegraphics[width=\linewidth]
         {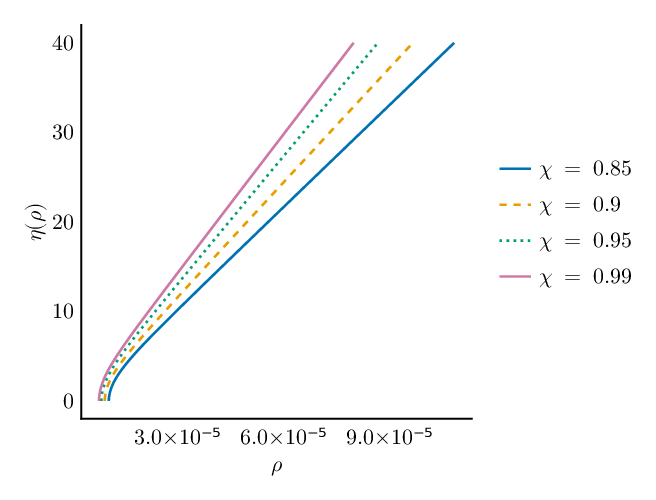}
         \caption{$\eta(\rho)$ with respect to $\rho$.}
         \label{fig:eta_and_S_2_a}
     \end{subfigure} 
\begin{subfigure}[b]{0.32\textwidth}
         \centering
         \includegraphics[width=\linewidth]
         {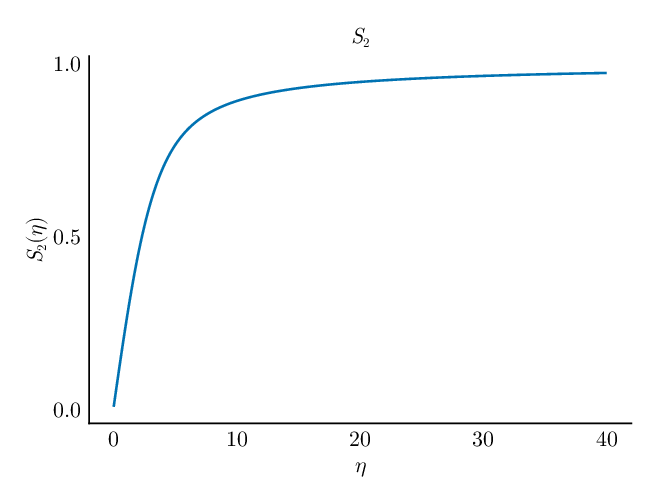}
         \caption{$S_2(\eta)$ with respect to $\eta$.}
         \label{fig:eta_and_S_2_b}
     \end{subfigure} 
\begin{subfigure}[b]{0.32\textwidth}
	\centering
	\includegraphics[width=\linewidth]
	{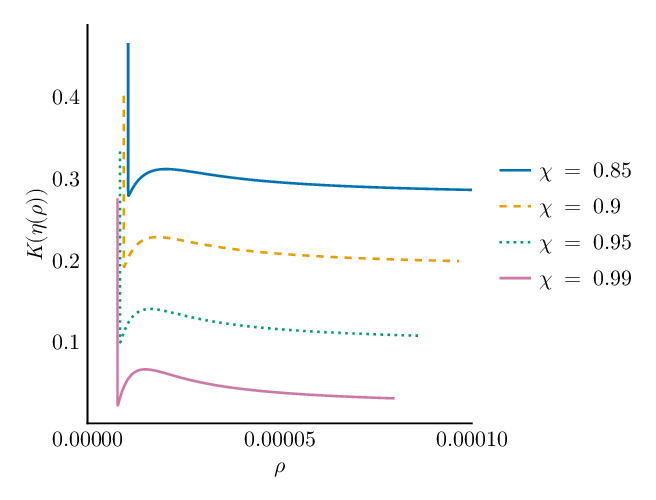}
	\caption{$K(\eta)$ with respect to $\rho$.}
	\label{fig:K_bounds_2}
\end{subfigure} 
\caption{(a) Plots of $\eta(\rho)$ with respect to $\rho$, for different $\chi$ values corresponding to different colors. Notice that $\eta$ is non-decreasing with respect to $\rho$. (b) A plot of $S_2(\eta)$ versus $\eta$. (c) Evolution of the diffusion coefficient $K$ with respect to the particle density $\rho$. Different colors correspond to different $\chi$ values.}
\label{fig:eta_and_S_2}
\end{figure}

\bibliography{Anisotropic_repulsion}

\end{document}